\newcommand{\comment}[1]{}
\newcommand{\qbinom}[3]{ \begin{pmatrix} #1\\#2\end{pmatrix}_{#3}}
\numberwithin{equation}{section}
\newtheorem{theorem}{Theorem}[section]
\newtheorem{lemma}[theorem]{Lemma}
\newtheorem{lem}[theorem]{Lemma}
\newtheorem{coro}[theorem]{Corollary}
\newtheorem{conjecture}[theorem]{Conjecture}
\newtheorem{prop}[theorem]{Proposition}
\newtheorem{pro}[theorem]{Proposition}
\theoremstyle{definition}
\newtheorem{example}[theorem]{Example}
\theoremstyle{remark}
\newtheorem{remark}[theorem]{Remark}
\def\pf{\begin{proof}}
\def\epf{\end{proof}}
\newcommand{\ba}{ \mathbf{a}}
\newcommand{\ku}{ \Bbbk}
\newcommand{\G}{\mathbb G}
\newcommand{\qmb}{\mathtt{q}}
\newcommand{\qt}{\widetilde{q}}
\newcommand{\wt}{\widetilde{w}}
\newcommand{\I}{\mathbb I}
\newcommand{\J}{\mathbb J}
\newcommand{\N}{\mathbb N}
\newcommand{\Q}{\mathbb Q}
\newcommand{\bq}{\mathbf{q}}
\newcommand{\Z}{\mathbb Z}
\newcommand{\zt}{\Z^{\theta}}
\newcommand{\cB}{\mathcal{B}}
\newcommand{\cJ}{\mathcal{J}}
\newcommand{\cR}{\mathcal{R}}
\newcommand{\X}{\mathcal{X}}
\newcommand{\ad}{\operatorname{ad}}
\newcommand{\Aut}{\operatorname{Aut}}
\newcommand{\id}{\operatorname{id}}
\newcommand{\GK}{\operatorname{GKdim}}
\newcommand{\ord}{\operatorname{ord}}
\def\ydh{{}^{H}_{H}\mathcal{YD}}
\newcommand{\NA}{\mathcal{B}}
\newcommand{\toba}{\mathcal{B}}
\newcommand{\ot}{\otimes}
\newcommand{\realroots}{\boldsymbol{\Delta }^{\mathrm{re}}}
\newcommand{\roots}{\boldsymbol{\Delta }}
\newcommand{\ydG}{{}^{\ku \Gamma }_{\ku \Gamma }\mathcal{YD}}
\newcounter{tabla}\stepcounter{tabla}
\begin{document}

\title[On finite GK-dimensional Nichols algebras of diagonal type]{On finite GK-dimensional Nichols algebras of diagonal type}  

\author[Andruskiewitsch; Angiono; Heckenberger]
{Nicol\'as Andruskiewitsch, Iv\'an Angiono, Istv\'an Heckenberger}

\address{FaMAF-CIEM (CONICET), Universidad Nacional de C\'ordoba,
Medina A\-llen\-de s/n, Ciudad Universitaria, 5000 C\' ordoba, Rep\'
ublica Argentina.} \email{(andrus|angiono)@famaf.unc.edu.ar}

\address{Philipps-Universität Marburg,
	Fachbereich Mathematik und Informatik,
	Hans-Meerwein-Straße,
	D-35032 Marburg, Germany.} \email{heckenberger@mathematik.uni-marburg.de}

\thanks{\noindent 2000 \emph{Mathematics Subject Classification.}
16W30. \newline The work of N. A. and I. A.  was partially supported by CONICET,
Secyt (UNC), the MathAmSud project GR2HOPF. The work of I. A. was partially supported by ANPCyT (Foncyt).
The work of N. A., respectively I. A., was partially done during a visit to the University of Marburg, 
respectively the MPI (Bonn), both visits supported by the Alexander von Humboldt Foundation.}

\begin{abstract}
It was conjectured in \texttt{\small arXiv:1606.02521} that a Nichols algebra of diagonal type 
with finite Gelfand-Kirillov dimension has finite (generalized) root system.
We prove the conjecture assuming that the rank is 2.
We also show that a Nichols algebra of affine Cartan type has infinite Gelfand-Kirillov dimension. 
\end{abstract}

\maketitle


\section{Introduction}\label{section:introduction}

In this paper we contribute to the classification of Hopf algebras with finite
Gelfand-Kirillov dimension, $\GK $ for short. Specifically we propose:

\begin{conjecture}\label{conjecture:nichols-diagonal-finite-gkd} \cite[Conjecture 1.5]{AAH}  If $V$ is a 
 braided vector space of diagonal type and dimension $\theta \in \N$ such that the $\GK$ of its Nichols algebra
$\NA(V)$ is finite, then its (generalized) root system is finite.
\end{conjecture}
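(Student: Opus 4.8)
The plan is to \emph{bootstrap} the rank $2$ theorem, together with the affine Cartan type result announced in the abstract, up to arbitrary dimension $\theta$, by exploiting the functoriality of the Nichols algebra construction and then isolating the genuinely higher-rank obstructions. First I would dispose of disconnected generalized Dynkin diagrams: if the vertex set splits so that $q_{ij}q_{ji}=1$ whenever $i,j$ lie in different parts, say $V=V_1\oplus V_2$, then $\NA(V)\cong\NA(V_1)\underline{\otimes}\NA(V_2)$ as braided Hopf algebras, whence $\GK\NA(V)=\GK\NA(V_1)+\GK\NA(V_2)$ and the generalized root system is the disjoint union of the two. So it suffices to treat connected diagrams.

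Next come the two functoriality inputs. \textbf{(i)} For any subset $J$ of the index set, the sub-braided vector space $V_J$ spanned by $\{x_j:j\in J\}$ satisfies $\NA(V_J)\hookrightarrow\NA(V)$ as a braided subalgebra, so that $\GK\NA(V_J)\le\GK\NA(V)<\infty$. \textbf{(ii)} Each Weyl groupoid reflection $R_i$ produces a diagonal braiding $R_i(V)$, and finiteness of GK-dimension is invariant under such reflections; this invariance would itself be established via the Lusztig-type isomorphisms relating $\NA(V)$ and $\NA(R_i(V))$. Combining \textbf{(i)} and \textbf{(ii)}, at \emph{every} point of the putative Weyl groupoid every rank $2$ sub-braiding has finite GK-dimension, hence by the rank $2$ theorem a finite root system; thus every rank $2$ restriction of the generalized root system of $V$ is finite.

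The combinatorial heart is to deduce finiteness of the \emph{whole} generalized root system from finiteness of all its rank $2$ restrictions. This implication fails for abstract Cartan schemes — an affine Cartan matrix has every proper restriction of finite type yet an infinite root system — which is exactly the gap the affine result is designed to close. I would argue by contradiction: assuming the root system infinite while all rank $2$ restrictions are finite, the structure theory of connected Cartan schemes forces a point of the groupoid at which some principal submatrix of rank $\ge 3$ is of affine or indefinite Kac--Moody type; reflecting back, this produces a sub-braiding $V_J$ with $|J|\ge 3$ of that type. In the affine case the abstract's result yields $\GK\NA(V_J)=\infty$, contradicting \textbf{(i)}.

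The indefinite, in particular hyperbolic, types are where the difficulty concentrates, since the affine result cannot be quoted directly. Here I expect to need growth estimates showing that the real and imaginary root multiplicities of an indefinite diagonal braiding grow superpolynomially, forcing $\GK=\infty$. A plausible route is to use the PBW basis attached to a convex order on the positive roots to exhibit a free subalgebra on two root vectors $x_\beta,x_\gamma$ of infinite height, with $q_{\beta\beta},q_{\gamma\gamma}$ not roots of unity and $\widetilde{q}_{\beta\gamma}:=q_{\beta\gamma}q_{\gamma\beta}$ violating the rank $2$ finiteness constraints along an infinite family of roots $\beta$. The crux — and the reason the conjecture is hard beyond rank $2$ — is to control the PBW generators and root multiplicities \emph{uniformly} across the infinitely many roots produced by an indefinite Weyl groupoid, rather than in any single bounded computation.
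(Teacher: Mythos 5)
The statement you set out to prove is a \emph{conjecture}: the paper does not prove it in general, but only in two cases (Theorem \ref{theorem:main}): affine Cartan type (Proposition \ref{prop:dimaffineCartan}) and $\theta=2$ (Theorem \ref{th:infGK}). Your proposal therefore cannot be checked against a proof in the paper, and on its own terms it is not a proof either. The genuine gap is exactly where you locate it yourself: the indefinite case in rank $\ge 3$. Your final paragraph offers only ``I expect to need growth estimates'' and ``a plausible route,'' with no argument that root multiplicities grow superpolynomially, no construction of the free subalgebra on two root vectors, and no mechanism producing the needed violations of the rank-$2$ constraints uniformly along infinitely many roots. The paper itself makes clear this is the open frontier: after Proposition \ref{prop:dimaffineCartan} it observes that even the \emph{Cartan} case of the conjecture would still require settling the compactly hyperbolic generalized Cartan matrices with $3\le\theta\le 5$, and its only tool for indefinite type, Lemma \ref{le:dimindefCartan}, needs the extra hypothesis that some root $\gamma$ satisfies $q_{\gamma,\gamma}=1$ --- a hypothesis the paper verifies only in rank $2$, through the delicate case analysis of \S\ref{subsection:rank-2} and \S\ref{subsec:proof-theta-2} (e.g.\ the $d_t$ computations feeding Proposition \ref{prop:dt-yn} and Lemma \ref{le:yNneq0isroot}). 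Nothing in your sketch supplies an analogue of this in rank $\ge 3$.

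There is also a flaw in the reduction itself. Your steps \textbf{(i)} and \textbf{(ii)} are sound and match the paper's tools (the sub-braiding inclusion and Theorem \ref{th:GK-reflections}), but the passage from ``all rank-$2$ restrictions finite, whole root system infinite'' to ``some point of the groupoid has a principal submatrix of affine or indefinite Kac--Moody type'' conflates two distinct notions. For a general diagonal braiding, the matrix $(c_{ij})$ of \eqref{eq:defcij} being affine does \emph{not} make the sub-braiding $V_J$ of affine \emph{Cartan type} in the sense of \eqref{eq:cartan} --- Proposition \ref{prop:dimaffineCartan} requires $q_{ij}q_{ji}=q_{ii}^{a_{ij}}$, which fails for most diagonal braidings (super and exceptional types, cf.\ Remark \ref{rem:nichols-diagonal-finite-gkd}). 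Moreover the matrices $(c_{ij})$ change from point to point of the Weyl groupoid, so the combinatorial dichotomy you invoke for a single Kac--Moody matrix is not available; the relevant structure theory (Cuntz--Heckenberger Cartan schemes, \cite{CH}) does not by itself yield the dichotomy you assert, and even where it does, the conclusion $\GK\NA(V_J)=\infty$ is unproved precisely in the indefinite case. So your argument reduces the conjecture to its known hard core rather than closing it; as a reduction it is a reasonable research plan, but it is not a proof.
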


Since the classification of  the Nichols algebras of diagonal type with finite generalized root system is known \cite{H-classif},
a positive answer to the Conjecture \ref{conjecture:nichols-diagonal-finite-gkd}  would imply 
the classification of the Nichols algebras of diagonal type with finite $\GK$.
Indeed, the converse in the Conjecture is clearly true.
The defining relations of these Nichols algebras are also known \cite{A-jems}.
The $\GK$ of these Nichols algebras can be computed \eqref{eq:gkd-formula}.
Our main result provides partial answers to this Conjecture:

\begin{theorem}\label{theorem:main}
Conjecture \ref{conjecture:nichols-diagonal-finite-gkd} holds in the following cases:
\begin{enumerate}[leftmargin=*,label=\rm{(\alph*)}]
\item\label{it:cartan-affine} $V$ is of affine Cartan type.
\item\label{it:theta-2}  $\theta = 2$.
\end{enumerate}

\end{theorem}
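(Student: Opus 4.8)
My plan rests on reading $\GK\NA(V)$ off a Kharchenko PBW basis. Such a basis exists for every Nichols algebra of diagonal type, with PBW generators $x_\beta$ indexed by the positive roots $\beta\in\roots_+$ of the generalized root system and heights $N_\beta=\ord(q_{\beta\beta})\in\N\cup\{\infty\}$; here $N_\beta=\infty$ exactly when $q_{\beta\beta}=1$ or $q_{\beta\beta}$ is not a root of unity. Since the PBW monomials are linearly independent, it suffices to exhibit super-polynomial growth of $\dim\NA(V)_{\le n}$, for then $\GK\NA(V)=\infty$. The cleanest source is an infinite family of infinite-height generators: if $\roots_+$ contains infinitely many $\beta$ with $N_\beta=\infty$, then for each $r$ the PBW monomials in $r$ of them already span a subspace whose dimension in degrees $\le n$ grows like $n^r$, so the total growth beats every polynomial. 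Exponential growth of the graded dimensions, or partition-type growth $\exp(c\sqrt n)$, equally forces $\GK\NA(V)=\infty$. As $\roots$ finite gives the closed formula \eqref{eq:gkd-formula} and the converse in the Conjecture is immediate, the content is to show that $\roots$ infinite produces super-polynomial growth. The recurring difficulty is that infinitely many roots do not suffice by themselves: their degrees might grow so fast, and their heights be so thoroughly truncated by roots of unity, that the count could a priori remain polynomial. The two cases of the theorem are precisely the settings where I can exclude this.

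For \ref{it:cartan-affine} let $(a_{ij})$ be the affine (hence symmetrizable) generalized Cartan matrix with $q_{ij}q_{ji}=q_{ii}^{a_{ij}}$, and let $\delta=\sum_i a_i\alpha_i$ be the null root, so $A\mathbf a=0$. The key computation is that $q_{\beta\beta}$ depends only on the symmetric data $q_{ii}$ and $q_{ij}q_{ji}$, and that $A\mathbf a=0$ forces $q_{\delta\delta}=\prod_i q_{ii}^{a_i^2}\prod_{i<j}(q_{ij}q_{ji})^{a_ia_j}=1$; more generally a positive real root $\beta$ conjugate to the simple root $\alpha_i$ under the affine Weyl group has $q_{\beta\beta}=q_{ii}$, since the invariant form is Weyl-invariant. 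I would then split on the self-braidings $q_{ii}$, which are $\ne1$ by the definition of Cartan type. If some $q_{ii}$ is not a root of unity, the infinitely many positive real roots conjugate to $\alpha_i$ all have $N_\beta=\infty$, and the first growth mechanism gives $\GK\NA(V)=\infty$. If instead every $q_{ii}$ is a root of unity $\ne1$, then all real root vectors are truncated, and I would use the imaginary directions: the multiples $n\delta$ give PBW generators with $q_{n\delta,n\delta}=q_{\delta\delta}^{n^2}=1$, hence of infinite height, and there are infinitely many of them. Either way the infinite-height family applies. Here the technical heart, and the step I expect to fight with, is to prove that these imaginary PBW generators genuinely survive in $\NA(V)$ and are not absorbed by the defining relations when the $q_{ii}$ are roots of unity; note also that no reduction to a smaller affine diagram is available, since every proper connected subdiagram of an affine diagram is of finite type, so \ref{it:cartan-affine} must be handled as a whole.

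For \ref{it:theta-2} I would run the rank-two Weyl groupoid. The first ingredient is a reflection-invariance statement: passing from $V$ to a reflection preserves finiteness of $\GK$, so I may move freely inside the Weyl-equivalence class of $V$. If $\roots$ is infinite then the alternating reflections never close up and produce an infinite convex sequence $\beta_1=\alpha_1,\beta_2,\beta_3,\dots$ of positive roots, whose generalized Dynkin data evolve by Heckenberger's explicit reflection formulas. I would separate according to the action of $s_1s_2$ on the rank-two root lattice. In the parabolic (affine) regime the degrees $|\beta_k|$ grow linearly; even if all heights are finite, the number of PBW monomials of total degree $\le n$ then grows like the number of partitions into distinct parts, i.e. as $\exp(c\sqrt n)$, which is super-polynomial, so $\GK\NA(V)=\infty$. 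In the hyperbolic (indefinite) regime the real-root degrees grow exponentially and are sparse, but the imaginary cone is two-dimensional and the number of PBW generators in a given imaginary degree grows exponentially, so $\dim\NA(V)_n$ grows exponentially and we are again done.

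The single step I expect to be the main obstacle, in both parts, is the same: controlling the heights in the presence of roots of unity, i.e. ruling out that the truncating self-braidings $q_{\beta\beta}$ conspire to bring the monomial count down to polynomial growth. In \ref{it:cartan-affine} I handle it by exhibiting the infinite, infinite-height family of isotropic imaginary generators; in \ref{it:theta-2} the analogue is delicate because Heckenberger's rank-two list \cite{H-classif} contains many exotic, non-Cartan diagrams whose braiding is not governed by a symmetric Cartan form, so the self-braidings along the convex sequence must be tracked case by case through the reflection recursion, and the growth estimate established uniformly. Matching this analysis against the known defining relations \cite{A-jems} and the formula \eqref{eq:gkd-formula} is where the bulk of the rank-two work would lie.
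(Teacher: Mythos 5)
Both halves of your proposal hinge on the same unproven assertion, and it is exactly the hard part. In \ref{it:cartan-affine}, your case where every $q_{ii}$ is a root of unity rests on PBW generators of degree $n\delta$ with $q_{n\delta,n\delta}=1$ ``genuinely surviving'' in $\NA(V)$; you flag this yourself as the step you expect to fight with, but nothing in the proposal supplies it, and for Nichols algebras of diagonal type there is no a priori lower bound on imaginary root multiplicities --- the roots of $\NA(V)$ \emph{are by definition} the degrees of Kharchenko PBW generators, so the existence of generators in degree $n\delta$ is precisely what would have to be proved. The same issue is fatal in \ref{it:theta-2}: in your hyperbolic regime the real roots have exponentially growing, sparse degrees, so square-free ordered monomials in them give only $2^{O(\log n)}$, i.e.\ polynomial, growth; your claim that ``the number of PBW generators in a given imaginary degree grows exponentially'' is unsubstantiated and is essentially a restatement of $\GK\NA(V)=\infty$, the conclusion you are after. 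A priori it could happen that an infinite rank-2 root system consists only of real roots, all of finite height, and then the monomial count would stay polynomial; ruling this out is the content of the theorem, not something one may assume.

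The paper closes the gap in \ref{it:cartan-affine} by never touching imaginary degrees or heights at all: by \cite[Proposition 6.3 d)]{K} one has $\realroots+\delta=\realroots$, so $\alpha+k\delta$ is a \emph{real} root for every $k\geq 0$, hence a root of $\NA(V)$ (real roots lie in the Weyl groupoid orbit of the simple roots, by \cite{H-inv}); the corresponding PBW generators $y_k$ have degree $mk+1$, and Lemma \ref{lemma:rosso-lemma19-gral} gives $\GK\NA(V)=\infty$, because ordered monomials $y_{i_1}\cdots y_{i_l}$ with $i_1<\cdots<i_l$ (each generator to the first power, allowed since all heights are $\geq 2$) are linearly independent members of the PBW basis --- this is exactly the partition-into-distinct-parts growth you invoke in your parabolic regime, but available unconditionally. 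That observation repairs your roots-of-unity case; your other case (some $q_{ii}\notin\G_\infty$) is fine via Lemma \ref{lem:infinite-generators-pbw}. For \ref{it:theta-2} the paper's route is entirely different and cannot be replaced by groupoid dynamics alone: it bounds the Cartan entries using the explicit elements $\wt_m$ of \eqref{eq:def-w-tilde} (Lemmas \ref{le:wm}, \ref{lem:conditions-w0=0}, \ref{lem:conditions-w1=0}, \ref{le:aij-ge-3}), builds auxiliary rank-2 braided vector spaces inside the quotients $K_{\geq r}/K_{>r}$ of braided coideal subalgebras (Lemma \ref{le:technical-0}, Corollary \ref{cor:crucialGK1}), proves that a root $\gamma$ with $q_{\gamma\gamma}=1$ forces infinite GK-dimension (Proposition \ref{pro:rootweight1}, via the semigeneric Corollary \ref{coro:infGK-semigeneric} and a shear-mapping argument in the torsion case), and then finishes by an exhaustive case analysis on $(a^V_{12},a^V_{21})$ against the tables of \cite{H-classif}. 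None of this machinery, which is what actually controls the heights you worry about, appears in your proposal.
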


We collect some necessary definitions and concepts in Section \ref{section:preliminaries}; 
Section \ref{section:diagonal-type} is devoted to general results that might be of  interest elsewhere. 
Part \ref{it:cartan-affine} is proved in Proposition \ref{prop:dimaffineCartan}.
Part \ref{it:theta-2}, which is Theorem \ref{th:infGK}, is proved in \S \ref{subsec:proof-theta-2}, after some preparatory Lemmas in 
\S \ref{subsection:rank-2}.

\section{Preliminaries}\label{section:preliminaries}

\subsection{Conventions}\label{subsection:conventions}
Let $\ku$ be an algebraically closed field of characteristic zero. All the vector spaces, algebras and tensor products  are over $\ku$.
Let $\N  =\{1,2,\dots\}$, $\N_0 = \N \cup 0$. 
Given $\theta \in\N$, we set $\I_\theta=\{1,2,\dots,\theta\}$, or simply $\I$ if $\theta$ is clear from the context.
In the polynomial ring $\Z[\qmb]$, we denote 
\begin{align*}
(n)_\qmb &=\sum_{j=0}^{n-1}\qmb^{j}, & (n)_\qmb^!&=\prod_{j=1}^{n} (j)_\qmb,& n &\in \N_0.
\end{align*}
 If  $q\in\ku$, then $(n)_q$, $(n)_q^!$ are the  evaluations at $q$.

\medbreak We denote by $\widehat \Gamma$ the group of multiplicative characters (one-dimensional representations) of a group $\Gamma$.
Let $\G_N$ be the group of roots of unity of order $N$,  $\G_N'$ the subset of primitive roots of order $N$, and
$\G_{\infty} = \bigcup_{N\in \N} \G_N$.

\medbreak Let $H$ be a Hopf algebra (always with bijective antipode). 
A braided Hopf algebra means a Hopf algebra in the category $\ydh $
of Yetter-Drinfeld modules over $H$.
If $R$ is a Hopf algebra  in $\ydh$, then $R\# H$ is the bosonization of $R$ by $H$.
Let $\ad$ be the adjoint action of $R\# H$ and $\ad_c$ the braided adjoint  action of $R$.
Then $\ad_c x\otimes \id = \ad (x\# 1)$ for $x\in R$. If $x\in R$ is primitive, then
$\ad_c x (y) = xy - \text{multiplication} \circ c (x\otimes y)$ for all $y\in R$.

\medbreak If $(V, c)$ is a braided vector space, then 
$\NA(V) = T(V)/\cJ(V)$ is the Nichols algebra of $V$, see \cite{AS Pointed HA,A-leyva,AA-diag-survey} for surveys on this notion.

\medbreak
A \emph{braided vector space of diagonal type} is a pair  $(V, c)$, where $V$
is a vector space   of dimension $\theta$ with a basis $(x_i)_{i\in \I_\theta }$, and the braiding
$c \in GL(V \ot V)$ is given by $c(x_i\otimes x_j)=q_{ij}x_j\otimes x_i$ for all $i,j\in \I$;
here $\bq = (q_{ij})_{i,j\in \I_\theta }\in(\ku^{\times})^{\theta \times \theta }$, $q_{ii} \neq 1$.
Let 
\begin{align*}
\qt_{ij}&:= q_{ij}q_{ji},& i\neq j \in \I_\theta.
\end{align*}
The generalized Dynkin diagram of $\bq$ is a graph with set of points $\I_\theta$ with the vertex $i$ decorated with $q_{ii}$; 
and for  $i\neq j \in \I_\theta$, no edge between $i$ and $j$ when $\qt_{ij} = 1$, otherwise there is an edge decorated with $\qt_{ij}$.

\subsection{The Gelfand-Kirillov dimension}\label{subsection:basic-defs}
A comprehensive exposition is \cite{KL}. For further use, 
we recall statements from \cite[\S 2.3.2]{AAH} inspired by \cite[Lemma 19]{R quantum groups}.

\begin{lemma}\label{lemma:rosso-lemma19-gral} Let $\NA = \oplus_{n \ge 0} \NA^n$
be a finitely generated graded algebra with $\NA ^0 = \ku$.
Let $(y_k)_{k\ge 0}$ be a family of homogeneous elements of $\NA$ such that
\begin{align} \label{equation:S-LI}
(y_{i_1}\dots y_{i_l}: i_j \in\N, i_1 <  \dots < i_l )
\end{align}
is a family of linearly independent elements. If there exist $m, p\in \N$ such that
$\deg y_i \leq mi + p$, for all $i\in \N$, then $\GK\cB = \infty$. \qed
\end{lemma}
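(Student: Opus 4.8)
The plan is to estimate the growth of the algebra $\cB$ from below by counting, in each total degree, the number of the specified products $y_{i_1}\cdots y_{i_l}$. The key quantitative input is the hypothesis $\deg y_i \le mi + p$, which controls how many of the $y_i$ can appear while keeping the total degree bounded, and the hypothesis that the ordered products are linearly independent, which means each such product contributes an \emph{independent} dimension to the appropriate graded piece of $\cB$.

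First I would fix a degree bound $d \in \N$ and seek a lower bound on $\dim \bigl( \bigoplus_{n \le d} \cB^n \bigr)$. Since the $y_i$ are homogeneous and the ordered products \eqref{equation:S-LI} are linearly independent, it suffices to count subsets $\{i_1 < \dots < i_l\} \subset \N$ for which $\sum_{j} \deg y_{i_j} \le d$. Using $\deg y_{i_j} \le m\, i_j + p$, any product built from indices all lying in $\{1, \dots, r\}$ has degree at most $\sum_{i=1}^{r}(mi+p) \le \tfrac{m}{2} r^2 + (p + \tfrac{m}{2}) r$, which is a polynomial in $r$. Hence if I set $d = d(r)$ equal to this bound, every one of the $2^r$ subsets of $\{1, \dots, r\}$ yields a product of degree at most $d(r)$, so that
\begin{align*}
\dim \Bigl( \bigoplus_{n \le d(r)} \cB^n \Bigr) \ge 2^r.
\end{align*}

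Next I would feed this into the definition of $\GK$. Recall that for a finitely generated graded algebra one may compute $\GK \cB$ using the Hilbert series, namely as the limit superior of $\log \bigl( \sum_{n\le N} \dim \cB^n \bigr) / \log N$. Since $d(r)$ grows polynomially in $r$, say $d(r) \le C r^2$ for a constant $C$ and all large $r$, the lower bound $2^r$ on the summatory dimension at level $N = d(r)$ gives $\log\bigl(\sum_{n \le N}\dim\cB^n\bigr) \ge r \log 2 \ge \sqrt{N/C}\,\log 2$. Thus the growth of $\cB$ is at least exponential in $\sqrt{N}$, which is super-polynomial; consequently $\log(\sum_{n\le N}\dim \cB^n)/\log N \to \infty$, i.e. $\GK \cB = \infty$.

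The only point requiring care — and the one I expect to be the main technical obstacle — is justifying that the linearly independent products genuinely lie in the degrees I am counting and that the chosen family $(y_k)$ can be taken with indices ranging over an \emph{infinite} set (the statement quantifies over $i \in \N$), so that $r$ may be sent to infinity. The degree hypothesis $\deg y_i \le mi + p$ is exactly what prevents the degrees from growing too fast to spoil the count; the subexponential-versus-polynomial comparison then forces infinite $\GK$. Everything else is the standard translation between counting monomials and the Hilbert-series formulation of $\GK$, for which one invokes \cite{KL}.
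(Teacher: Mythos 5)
Your proposal is correct. Note that the paper itself gives no proof of this lemma: it is recalled from \cite[\S 2.3.2]{AAH} with a \qed, the original argument going back to Rosso \cite{R quantum groups}. Your argument is exactly that standard one: the $2^r$ ordered products supported on $\{1,\dots,r\}$ are linearly independent and all lie in total degree at most $\tfrac{m}{2}r^2+(p+\tfrac{m}{2})r$, so the summatory Hilbert function grows at least like $2^{c\sqrt{N}}$, which beats every polynomial, and the Hilbert-series formulation of $\GK$ for finitely generated graded algebras \cite{KL} then forces $\GK\cB=\infty$. This is also the same mechanism the paper uses in its proof of Lemma \ref{lem:infinite-generators-pbw} (there counting ordered monomials in $d$ generators to get $\binom{n+d}{d}\le\dim(V_d)^{n+1}$ for arbitrary $d$). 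The worry you flag at the end is not a real obstacle: the hypothesis already provides the degree bound for all $i\in\N$, the products are homogeneous of degree equal to the sum of the degrees of their factors, and linear independence of the family makes each counted product contribute a full dimension, so $r$ may indeed be sent to infinity.
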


The following Lemma is due to Rosso \cite[Lemma 14,\,Corollary 18]{R quantum groups}.
Let $(U,c)$ be a braided vector space of diagonal type, 
with respect to a basis $x_1, x_2$ and a matrix  $(q_{ij})_{i,j\in \I_2}$.  We set
\begin{align}\label{eq:def-muk-yk}
\mu_k &= \prod _{i=0}^{k-1}(1-q_{11}^i\qt_{12}), & y_k &= (\ad_c x_1)^k x_2.
\end{align}

\begin{lemma}\label{lemmata:rosso} 
\begin{enumerate}[leftmargin=*,label=\rm{(\alph*)}]
\item\label{item:diagonal-relations}  If $k\in \N$, then
$y_k=0$ iff $\mu_k x_1^k =0$,  iff $(k)_{q_{11}}! \mu_k =0$.
\item\label{lemma:rosso-cor18}  If $y_k\neq 0$ for all $k\in \N$, then the  set
\eqref{equation:S-LI} is linearly independent. 

\item\label{item:diagonal-infiniteGK}  If $(k)_{q_{11}}! \mu_k \neq 0$ for every $k\in \N$, then
$\GK \NA (U) = \infty$.

\item \label{lemma:points-trivial-braiding} If  
$q_{11} = 1$ and $\qt_{12} \neq 1$, then $\GK \NA(U) = \infty$. \qed
\end{enumerate}
\end{lemma}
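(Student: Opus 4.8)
The plan is to prove parts (a) and (b) and then read off (c) and (d) as formal consequences. The engine for (a) is the pair of braided skew-derivations $\partial_1,\partial_2\colon\NA(U)\to\NA(U)$ determined by $\partial_i(x_j)=\delta_{ij}$ and the twisted Leibniz rule $\partial_i(uv)=\partial_i(u)\,v+q^{(u)}_i\,u\,\partial_i(v)$ for homogeneous $u$ of $\N_0^2$-degree $\alpha$, where $q^{(u)}_i=\prod_j q_{ji}^{\alpha_j}$, together with the fundamental property of Nichols algebras that a homogeneous element of positive degree vanishes if and only if it is annihilated by every $\partial_i$ (these maps are the transposes of left multiplication by $x_i$ under the canonical nondegenerate pairing). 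First I would record the recursion $y_{k+1}=x_1y_k-q_{11}^kq_{12}\,y_kx_1$, which holds because $y_k$ has degree $(k,1)$ and hence $c(x_1\otimes y_k)=q_{11}^kq_{12}\,y_k\otimes x_1$.

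Then I would prove by induction on $k$ the two formulas
\begin{align*}
\partial_2(y_k)&=0\ \ (k\ge 1), & \partial_1(y_k)&=(k)_{q_{11}}\bigl(1-q_{11}^{k-1}\qt_{12}\bigr)\,y_{k-1}.
\end{align*}
Both follow by applying $\partial_i$ to the recursion and using $y_k=x_1y_{k-1}-q_{11}^{k-1}q_{12}y_{k-1}x_1$ to reabsorb the term $x_1y_{k-1}$; the scalar telescopes to $(k+1)_{q_{11}}\bigl(1-q_{11}^{k}\qt_{12}\bigr)$ via the identity $q_{11}^k+(k)_{q_{11}}=(k+1)_{q_{11}}$. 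By the fundamental property and $\partial_2(y_k)=0$, we get $y_k=0$ iff $\partial_1(y_k)=0$, i.e. iff $(k)_{q_{11}}\bigl(1-q_{11}^{k-1}\qt_{12}\bigr)\,y_{k-1}=0$. Unwinding the induction, $y_k=0$ iff one of the scalars $(j)_{q_{11}}\bigl(1-q_{11}^{j-1}\qt_{12}\bigr)$ with $1\le j\le k$ vanishes, i.e. iff $\prod_{j=1}^k (j)_{q_{11}}\bigl(1-q_{11}^{j-1}\qt_{12}\bigr)=(k)_{q_{11}}!\,\mu_k=0$. Finally, since the rank-one subalgebra generated by $x_1$ gives $x_1^k=0$ iff $(k)_{q_{11}}!=0$, and $\mu_k$ is a constant, one has $\mu_kx_1^k=0$ iff $\mu_k=0$ or $(k)_{q_{11}}!=0$, iff $(k)_{q_{11}}!\,\mu_k=0$; this closes (a).

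For (b) I would pass to the canonical nondegenerate braided-symmetric bilinear form $\langle\ ,\ \rangle$ on $\NA(U)$, whose radical on $T(U)$ is exactly $\cJ(U)$, so that linear independence in $\NA(U)$ is equivalent to nondegeneracy of a Gram matrix. The form is graded, so $\langle y_i,y_j\rangle=0$ whenever $i\ne j$; moreover, using that $\ad_c x_1$ is adjoint to $\partial_1$ (up to the unit $\langle x_1,x_1\rangle$) together with the computation of (a), one gets the recursion $\langle y_k,y_k\rangle=\langle x_1,x_1\rangle\,(k)_{q_{11}}\bigl(1-q_{11}^{k-1}\qt_{12}\bigr)\langle y_{k-1},y_{k-1}\rangle$, whence $\langle y_k,y_k\rangle$ is a nonzero scalar multiple of $(k)_{q_{11}}!\,\mu_k$, hence nonzero precisely when $y_k\ne0$. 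The decisive step is the quantum-shuffle (Wick-type) expansion
$$\langle y_{i_1}\cdots y_{i_l},\,y_{j_1}\cdots y_{j_l}\rangle=\sum_{\sigma\in\Sb_l}c_\sigma\prod_{s=1}^l\langle y_{i_s},y_{j_{\sigma(s)}}\rangle,$$
with each $c_\sigma$ a product of braiding scalars $q_{rs}$, hence a unit. Because each factor $y_i$ has $x_2$-degree $1$ and the form pairs only equal degrees, a term survives only when $i_s=j_{\sigma(s)}$ for all $s$; for strictly increasing index sets this forces the two sets to coincide and $\sigma=\id$. Thus the Gram matrix of $\{y_{i_1}\cdots y_{i_l}:i_1<\cdots<i_l\}$ is diagonal in each bidegree with nonzero entries, proving that the family \eqref{equation:S-LI} is linearly independent.

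Parts (c) and (d) are then immediate. For (c), the hypothesis and (a) give $y_k\ne0$ for all $k$, so \eqref{equation:S-LI} is linearly independent by (b); since $\deg y_k=k+1$, Lemma \ref{lemma:rosso-lemma19-gral} applies with $m=p=1$ and yields $\GK\NA(U)=\infty$. For (d), when $q_{11}=1$ we have $(k)_{q_{11}}!=k!\ne0$ in characteristic zero and $\mu_k=(1-\qt_{12})^k\ne0$ since $\qt_{12}\ne1$, so the hypothesis of (c) holds. I expect the main obstacle to be the shuffle expansion in (b): making rigorous that the cross terms vanish and that the surviving diagonal coefficient is a unit is where the genuine content of Rosso's argument lies, since this is exactly what upgrades the easy linear independence of the $y_{i_1}\cdots y_{i_l}$ in the free algebra $T(U)$ to independence in the quotient $\NA(U)$.
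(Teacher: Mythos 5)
You prove (a) with skew-derivations of the opposite convention to the paper's: the paper's $\partial_i$ satisfy \eqref{eq:derivations-yk}, i.e.\ $\partial_1(y_k)=0$ and $\partial_2(y_k)=\mu_k x_1^k$, which makes (a) immediate, whereas your $\partial_i$ satisfy $\partial_2(y_k)=0$ and $\partial_1(y_k)=(k)_{q_{11}}(1-q_{11}^{k-1}\qt_{12})\,y_{k-1}$. Both conventions are legitimate, your formulas are correct for yours, and your deduction of (a) is sound; so are the deductions of (c) from (a)+(b) via Lemma \ref{lemma:rosso-lemma19-gral} (with $m=p=1$) and of (d) from (c) (in characteristic zero $(k)_{q_{11}}^!=k!\neq 0$ and $\mu_k=(1-\qt_{12})^k\neq0$). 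Note that the paper itself gives no proof of this lemma: it is quoted from Rosso, so the comparison here is of your reconstruction against what such a proof actually requires.

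The genuine gap is in (b), exactly where you flagged it, and it is not a technicality: the displayed Wick-type expansion is \emph{false}. That expansion would be valid if the $y_k$ were primitive, but by \eqref{eq:copr_um} the coproduct of $y_k$ contains the middle terms $\binom{k}{i}_{q_{11}}\frac{\mu_k}{\mu_i}\,x_1^{k-i}\ot y_i$, and these produce terms in $\langle y_{i_1}\cdots y_{i_l},\,y_{j_1}\cdots y_{j_l}\rangle$ that your formula omits. Your degree argument does not kill them: a factor $\langle y_i,\,x_1^a y_m\rangle$ with $a+m=i$, $a>0$, equals $\binom{i}{m}_{q_{11}}\frac{\mu_i}{\mu_m}\langle x_1^a,x_1^a\rangle\langle y_m,y_m\rangle$, which is nonzero whenever $y_i\neq 0$ --- that is, precisely under the hypothesis of (b). A concrete failure of the identity: $\langle y_2y_0,\,y_1y_1\rangle$ reduces to a nonzero multiple of $\langle y_2,\,x_1y_1\rangle\langle y_0,y_0\rangle\neq 0$, while your formula predicts $0$ on degree grounds. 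Restricted to pairs of strictly increasing sequences the cross terms do all vanish, but proving that requires using the ordering: one must check that every surviving extra term forces $x_1$-degree to flow from a later tensor position into an earlier one (an inversion of $\sigma$), which is incompatible with $i_1<\cdots<i_l$ and $j_1<\cdots<j_l$, since any $x_1$-power landing to the right of a $y$-factor annihilates the pairing. This triangularity argument is the actual content of Rosso's Corollary 18 and is absent from your sketch; alternatively one can bypass the bilinear form and argue with Kharchenko's PBW basis \eqref{eq:PBW-Kharchenko}, since $x_1^kx_2$ is the unique Lyndon word of degree $k\alpha_1+\alpha_2$, $y_k$ is its super-letter, and increasing products $y_{i_1}\cdots y_{i_l}$ are PBW monomials. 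As written, (b) --- and with it the quantitative heart of (c) and (d) --- is not proved.
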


\subsection{Nichols algebras of diagonal type}\label{subsection:diagonal-type}

We fix a braided vector space of diagonal type  $(V, c)$ with notation as in \S \ref{subsection:conventions}. 
\emph{We assume that the generalized Dynkin diagram is connected}.

\medbreak
Let $(\alpha_i)_{i\in \I}$ be the canonical basis of $\Z^\theta$.
Let $\bq$ be the bicharacter  on $\Z^\theta$ such that
 $\bq (\alpha _i,\alpha _j) = q_{ij}$ for all $i,j\in \I $; we set $q_{\alpha \beta }=\bq (\alpha ,\beta)$
 for $\alpha, \beta \in \Z^\theta $.

 Then $T(V)$ and $\NA (V)$ are $\Z^\theta $-graded with $\deg x_i=\alpha _i$ for all $i\in \I$. For each $\alpha \in \Z^\theta$, $\NA^{\alpha}(V)$ denotes the homogeneous component of degree $\alpha$.

\medbreak
Let $\Gamma = \zt$. We realize $V$ in $\ydG$ by choosing the family  $(\chi_i)_{i\in \I}$ in $\widehat{\Gamma}$ 
such that $\chi_j(\alpha_i) = q_{ij}$ for all $i,j\in \I$.
Then $\NA (V)$ becomes an $\zt$-graded object in $\ydG$.
There are skew-derivations $\partial _i$, $i\in \I$ of $\NA
(V)$, such that $\partial _i(x_j)=\delta_{ij}$ and
\begin{align*}
\partial_i(xy)&=x\partial_i(y)+\partial_i(x)(\alpha_i\cdot y),&  x,y &\in \NA (V).
\end{align*}

Let $\mu_k$, $y_k = (\ad_c x_1)^k x_2$ as in \eqref{eq:def-muk-yk}.  We notice that
\begin{align}\label{eq:derivations-yk}
\partial_1(y_k)&=0, & \partial_2(y_k)&=\mu_k x_1^k, & \mbox{for all }&k\in\N_0.
\end{align}
Also, it is well-known, and easy to check by a recursive argument, that
\begin{align} \label{eq:copr_um}
\Delta (y_k)=y_k\ot 1 +\sum _{i=0}^k\qbinom k i{q_{11}} \frac{\mu_k}{\mu_i}
x_1^{k-i}\ot y_i.
\end{align}

By \cite{Kh}, there is a totally ordered subset $L \subset \NA(V)$ consisting of $\Z^\theta$-homoge\-neous elements such that
\begin{align}\label{eq:PBW-Kharchenko}
\{ \ell_1^{m_1}\cdots \ell_k^{m_k} \,|\,k\in \N _0,\ell_1>\cdots >\ell_k\in L,0 < m_i< N_{\ell_i}\,\text{for all $i \in \I_k$}\}
\end{align}
is a linear basis of $\NA(V)$ (a so called restricted  PBW basis); here
$$ N_\ell = \min\{n\in \N: (n)_{q_{\deg \ell,\deg \ell}}=0\} \in \N \cup \infty$$
is called the height of $\ell$.

\begin{lem}\label{lem:infinite-generators-pbw}
If $\toba(V)$ has a restricted homogeneous PBW basis with infinitely many PBW generators of infinite height, then $\GK \toba(V) = \infty$.
\end{lem}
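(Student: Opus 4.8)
The plan is to bound the Hilbert function of $\toba(V)$ from below using the PBW basis \eqref{eq:PBW-Kharchenko}, exploiting that generators of infinite height may occur with arbitrarily large exponent. Since $\toba(V)$ is generated in degree one by the finite-dimensional space $V$, it is a finitely generated $\N_0$-graded algebra with finite-dimensional homogeneous components, so $\GK\toba(V)=\limsup_{n\to\infty}\frac{\log\dim\left(\bigoplus_{j\le n}\toba^j(V)\right)}{\log n}$, where $\toba^j(V)$ is the component of total degree $j$. It therefore suffices to produce, for each $r\in\N$, a lower bound $\dim\left(\bigoplus_{j\le n}\toba^j(V)\right)\ge c_r\,n^r$ valid for all large $n$; this forces $\GK\toba(V)\ge r$, and as $r$ is arbitrary, $\GK\toba(V)=\infty$.

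Fix $r\in\N$. By hypothesis I may choose $r$ PBW generators of infinite height, say $\ell_1>\dots>\ell_r$ in $L$, of total degrees $d_1,\dots,d_r\in\N$. Because each $N_{\ell_i}=\infty$, every monomial $\ell_1^{m_1}\cdots\ell_r^{m_r}$ with $m_1,\dots,m_r\in\N_0$ is a PBW basis element (omitting the factors with $m_i=0$), and distinct exponent vectors yield distinct basis elements, since the exponent of $\ell_i$ is recovered from the monomial; hence these monomials are linearly independent. The monomial attached to $(m_1,\dots,m_r)$ is homogeneous of total degree $\sum_i m_id_i$. Writing $D=\max_i d_i$, every tuple with $0\le m_i\le\lfloor n/(rD)\rfloor$ satisfies $\sum_i m_id_i\le n$, so the corresponding monomials lie in $\bigoplus_{j\le n}\toba^j(V)$ and number at least $(\lfloor n/(rD)\rfloor+1)^r\ge (n/(rD))^r$. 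This gives the desired bound with $c_r=(rD)^{-r}$, completing the argument.

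The one point deserving care — and the reason one cannot simply invoke Lemma \ref{lemma:rosso-lemma19-gral} — is that the total degrees of the infinite-height generators need not grow linearly under any enumeration: if, say, there were a single such generator in each degree $2^k$, then every sequence $(y_k)$ built from them would have super-linearly growing degrees, so the hypothesis $\deg y_k\le mk+p$ of Lemma \ref{lemma:rosso-lemma19-gral} would fail. The argument above sidesteps this entirely: instead of taking products of \emph{distinct} generators (which needs a linear degree bound to force fast growth), it takes arbitrary \emph{powers} of finitely many generators, whose monomials already yield polynomial growth of unbounded degree $r$. I expect no further obstacle, the linear independence being immediate from Kharchenko's PBW theorem together with the infinite-height hypothesis.
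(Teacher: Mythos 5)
Your proof is correct and follows essentially the same route as the paper's: both select finitely many infinite-height PBW generators, note that their ordered monomials with arbitrary exponents remain linearly independent by Kharchenko's theorem, and count them to get polynomial growth of arbitrarily large degree, forcing $\GK\toba(V)=\infty$. The only cosmetic difference is that the paper bounds $\dim (V_d)^{n+1}$ for the finite-dimensional subspace $V_d$ spanned by $1$, the $x_i$'s and the chosen generators (invoking the definition of $\GK$ directly and getting the count $\binom{n+d}{d}$), whereas you bound the Hilbert function $\dim\bigoplus_{j\le n}\toba^j(V)$ via the grading and the bound $m_i\le\lfloor n/(rD)\rfloor$; both yield $\GK\toba(V)\ge r$ for every $r$.
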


\pf By assumption there exists $L$ as above and $I \subseteq L$ infinite such that $N_\ell = \infty$ for all $\ell \in I$.
Let $d \in \N$, $F_d \subset I$ with $\vert F_d \vert = d$ and  $V_d$ the subspace generated by $1$,
the $x_i$'s and  all $\ell \in F_d$.
Then $(V_d)^{n + 1}$ contains the ordered monomials in $F_d$ of degree $\leq n + 1$, hence $\dim (V_d)^{n+1} \geq \binom{n+d}{d}$. 
Hence $\limsup \log_n \dim (V_d)^{n + 1} \geq d$. Since $d$ is arbitrary, $\GK \toba(V) = \infty$.
\epf

Assume that $L$ is finite (the Conjecture \ref{conjecture:nichols-diagonal-finite-gkd} says that this is the case when $\GK \NA(V) < \infty$). 
By \cite[Theorem 12.6.2]{KL}, we conclude that 
\begin{align} \label{eq:gkd-formula}
\GK \NA(V) = \vert \{\ell \in L: N_\ell =\infty \}\vert.
\end{align}

Let $\roots^V_+ =\roots_+ = (\deg \ell)_{\ell\in L}$ 
be the family of positive roots of $\NA (V)$ (with multiplicities).
By \cite[Lemma~4.7]{HS-RsWg},
it is uniquely determined, i.e. it does not depend on $L$.

\medbreak
 We say that \emph{we can reflect $V$ at} $i\in\I$  if, for all $j\neq i$, there exists $n\in\N_0$ such that
 $(n+1)_{q_{ii}}(1-q_{ii}^n q_{ij}q_{ji})=0$. 
  In such case, following \cite{H-inv} we define a generalized Cartan matrix $(c_{ij})$ by  $c_{ii} =2$ and
 \begin{align}\label{eq:defcij}
 c_{ij}&:=-\min\{n\in\N_0:(n+1)_{q_{ii}}(1-q_{ii}^nq_{ij}q_{ji})=0\},& & &j\neq i.
 \end{align}

\bigbreak
Let $s_i\in GL(\zt)$ be given by 
\begin{align}\label{eq:def-si}
s_i (\alpha_j) &= \alpha_j - c_{ij}\alpha_i, &  j & \in \I.
\end{align}
The reflection at the vertex $i$ of $\bq$ is the matrix $\cR^i(\bq) = (t_{jk})_{j,k\in\I}$, where
\begin{align}\label{eq:def-rho-ij}
t_{jk}&:= q_{s_i(\alpha_j), s_i(\alpha_k)} 
=  q_{jk}q_{ik}^{-c_{ij}}q_{ji}^{-c_{ik}}q_{ii}^{c_{ij} c_{ik}}, & j,k&\in\I.
\end{align}
Let $\cR^i(V)$ be the braided vector space of diagonal type with matrix $\cR^i(\bq)$.

 \begin{theorem}\label{th:GK-reflections} \cite{H-inv, AA}
  $\GK \toba(\cR^i(V)) = \GK \toba(V)$. \qed
 \end{theorem}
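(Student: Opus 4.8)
The plan is to read $\GK$ off the Kharchenko PBW data on both sides and to match these data through the reflection. By \eqref{eq:gkd-formula} and Lemma \ref{lem:infinite-generators-pbw}, the number $\GK \toba(V)$ is governed by the cardinality of $\{\ell \in L : N_\ell = \infty\}$: it equals this number when $L$ is finite, and it is $\infty$ as soon as infinitely many PBW generators have infinite height. So it suffices to produce a bijection between the positive roots of $\toba(V)$ and those of $\toba(\cR^i(V))$ that respects multiplicities and carries infinite-height generators to infinite-height generators.

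Such a bijection is precisely what the Weyl groupoid theory supplies, and constructing it is the step I expect to be the main obstacle. First I would invoke the reflection functor of \cite{H-inv} at the vertex $i$, which relates $\toba(V)$ and $\toba(\cR^i(V))$ while implementing $s_i$ on $\zt$-degrees. It should induce a multiplicity-preserving bijection $\roots^V_+ \to \roots^{\cR^i(V)}_+$ given by $\alpha_i \mapsto \alpha_i$ and $\beta \mapsto s_i(\beta)$ for $\beta \neq \alpha_i$; here $\alpha_i$ is singled out because $s_i(\alpha_i) = -\alpha_i$, which matches the fact that both algebras carry $x_i$ as a degree-$\alpha_i$ generator. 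This part does not reduce to any manipulation of defining relations: it rests on realizing $\cR^i$ as an equivalence-type functor on Yetter-Drinfeld modules, so that is where the real work lies.

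Granting the bijection, height preservation comes for free. Directly from \eqref{eq:def-si} one has $s_i^2 = \id$, and from \eqref{eq:def-rho-ij} the reflected bicharacter satisfies $q^{\cR^i(V)}_{\gamma,\delta} = \bq(s_i\gamma, s_i\delta)$ for all $\gamma,\delta \in \zt$. Hence for a root $\gamma = s_i(\beta)$ of $\cR^i(V)$ we get $q^{\cR^i(V)}_{\gamma,\gamma} = \bq(\beta,\beta) = q_{\beta,\beta}$, and for $\alpha_i$ one checks $t_{ii} = q_{ii}$ by hand. Since $N_\ell$ depends only on $q_{\deg \ell, \deg \ell}$, the height is constant along the bijection, so $\{\ell \in L : N_\ell = \infty\}$ on the two sides have equal cardinality.

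Finally I would conclude by cases. The property ``$L$ is finite'' is equivalent to ``$\roots^V_+$ is finite,'' which by the multiplicity-preserving bijection holds for $V$ iff it holds for $\cR^i(V)$. When both are finite, \eqref{eq:gkd-formula} together with the equal infinite-height counts gives $\GK \toba(\cR^i(V)) = \GK \toba(V)$. When both root systems are infinite, the bijection transports infinitely many infinite-height generators from one side to the other (or none to none), so Lemma \ref{lem:infinite-generators-pbw} forces $\GK = \infty$ on both sides; the delicate residual situation of finitely many infinite-height generators sitting inside an infinite $L$ is the one place where the counting formula no longer applies, and there I would fall back on the structural relationship between the two algebras from \cite{AA} rather than on the PBW count. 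As $\cR^i$ is involutive, no orientation issue arises and equality holds in every case.
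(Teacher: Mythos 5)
Your first two steps are sound and follow the route the literature actually takes: the theorem carries no proof in the paper (it is quoted from \cite{H-inv, AA} with a \qed), and the multiplicity-preserving bijection $\alpha_i\mapsto\alpha_i$, $\beta\mapsto s_i(\beta)$ between $\roots^V_+$ and $\roots^{\cR^i(V)}_+$ is indeed the main theorem of \cite{H-inv}, while your height-preservation argument via $\bq^{\cR^i(V)}(\gamma,\delta)=\bq(s_i\gamma,s_i\delta)$ and $s_i^2=\id$ is correct (modulo the small check, also in \cite{H-inv}, that the Cartan entries $c_{ij}$ computed from $\cR^i(V)$ agree with those of $V$, which your symmetry/involutivity step silently uses). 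The genuine gap is in your concluding case analysis. When $L$ is infinite but only finitely many (possibly zero) PBW generators have infinite height, neither of the two tools you rely on says anything: \eqref{eq:gkd-formula} is only valid when $L$ is finite, and Lemma \ref{lem:infinite-generators-pbw} needs infinitely many infinite-height generators. In that residual case $\GK\toba(V)$ could a priori be finite --- ruling this out is essentially Conjecture \ref{conjecture:nichols-diagonal-finite-gkd} itself, which this theorem is used to attack, so you cannot assume it. And your proposed escape, ``fall back on the structural relationship from \cite{AA},'' is circular: \cite{AA} is precisely one of the two sources to which the statement being proved is attributed. As written, your argument proves the theorem only under the extra hypothesis that both root systems are finite or both contain infinitely many infinite-height roots.

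The gap is fixable inside your own framework, and the fix removes the case analysis altogether. The root bijection, together with equality of multiplicities and heights, gives a bijection between the restricted PBW bases \eqref{eq:PBW-Kharchenko} of $\toba(V)$ and $\toba(\cR^i(V))$ under which a monomial of $\Z^\theta$-degree $\alpha$ goes to one whose degree is obtained by applying $s_i$ to the non-$\alpha_i$ strands. Since every positive root has nonnegative coordinates, $|s_i(\beta)|\le C|\beta|$ with $C=1+\max_{j}|c_{ij}|$, so this bijection distorts total degree by at most the constant factor $C+2$, in both directions. Both Nichols algebras are connected, $\N_0$-graded and generated in degree one, so $\GK$ equals $\limsup_n \log_n$ of the dimension of the degree filtration (this is exactly how the paper argues in Lemma \ref{lem:infinite-generators-pbw}), and such a growth rate is unchanged by replacing $n$ with $Cn$. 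Hence $\GK\toba(V)=\GK\toba(\cR^i(V))$ uniformly, with no finiteness hypotheses. For comparison, the proof in \cite{AA} (building on \cite{H-inv}) instead exploits the coinvariant decomposition $\toba(V)\cong K_i\otimes\ku[x_i]$ with $K_i$ shared by $V$ and $\cR^i(V)$; both that argument and the degree-distortion argument above succeed exactly where your counting argument stalls, because they compare growth of the two algebras directly rather than through the numerical invariant $|\{\ell\in L: N_\ell=\infty\}|$, which does not determine $\GK$ when $L$ is infinite.
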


We say that $V$ \emph{admits all reflections} if we can reflect $V$ at every $i_1\in \I$,
then we can reflect $\cR^{i_1}(V)$ at every $i_2\in \I$ and so on, we can reflect $\cR^{i_k} \dots \cR^{i_1}(V)$
at every $i_{k+1}\in \I$ for all $k$.

If  $V$ admits all reflections, then we denote by $\X$ the collection of all braided vector spaces of diagonal type  obtained
from $V$ by a finite number of successive reflections at various vertices.
Here any two braided vector spaces with the same braiding matrix are identified.
The collection $(\roots^{U}_+)_{U \in \X}$ is the \emph{generalized root system} of $V$.

 \begin{remark}\label{rem:all-reflections-gkd}
If $\GK \NA(V) < \infty$, then we can reflect $V$ at every $i\in\I$ by Lemma \ref{lemmata:rosso} \ref{item:diagonal-infiniteGK}. 
Hence 	$V$ admits all reflections by Theorem \ref{th:GK-reflections}.
\end{remark}

\section{General results}\label{section:diagonal-type}
Recall that $(V,c)$ is of \emph{Cartan type} if there exist $a_{ij} \in \Z_{\leq 0}$ such that
\begin{align}\label{eq:cartan}
q_{ij}q_{ji} &= q_{ii}^{a_{ij}}, & i\neq j &\in \I.
\end{align}
Set $a_{ii} = 2$, $i\in \I$. 
If $q_{ii} \in \G_{\infty}$, then we choose $a_{ij} \in (-\ord q_{ii} , 0]$, when $i\neq j$; otherwise it is uniquely determined.
In any case, $\ba = (a_{ij})_{i,j\in \I_\theta}$ is an indecomposable 
symmetrizable generalized Cartan matrix \cite{K}.
These matrices are of three types: finite, affine or indefinite.
If $(V,c)$ is of Cartan type and $\GK \NA(V) < \infty$, then Conjecture \ref{conjecture:nichols-diagonal-finite-gkd} predicts that $\ba$ is of finite type.
Here is the confirmation for the affine type.

\begin{prop} \label{prop:dimaffineCartan}
If $\ba$ is of affine type, then $\GK\cB(V)=\infty$.
\end{prop}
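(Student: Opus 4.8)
The plan is to produce infinitely many Kharchenko PBW generators of $\cB(V)$ whose degrees grow only linearly and whose pairwise distinct products remain linearly independent, and then to invoke the general criterion of Lemma~\ref{lemma:rosso-lemma19-gral}. The affine hypothesis enters through the existence of the null root and, crucially, through the fact that the generalized root system of $V$ is the (infinite) root system of the affine Kac--Moody algebra $\g(\ba)$.

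First I would check that $V$ admits all reflections. Since $(V,c)$ is of Cartan type, for each $i\neq j$ the integer $n=-a_{ij}\in\N_0$ solves $(n+1)_{q_{ii}}(1-q_{ii}^{n}q_{ij}q_{ji})=0$, so $V$ can be reflected at every vertex; as $\cR^i(\bq)$ is again of Cartan type with the same generalized Cartan matrix \cite{H-inv}, this persists under iterated reflections and $V$ admits all reflections. The normalization $a_{ij}\in(-\ord q_{ii},0]$ made in Section~\ref{section:diagonal-type} forces $c_{ij}=a_{ij}$ in \eqref{eq:defcij}; hence the operators $s_i$ of \eqref{eq:def-si} are the simple reflections of $\ba$, and the generalized root system $(\roots_+^U)_{U\in\X}$ coincides with the root system of $\g(\ba)$. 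In particular $\roots_+^V$ is the set of positive roots of $\g(\ba)$, an infinite set.

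Next I would extract the family. Let $\delta=\sum_{i\in\I}m_i\alpha_i$, $m_i\in\N$, be the null root, characterized by $\sum_{j\in\I}a_{ij}m_j=0$; a direct computation then gives $\bq(\delta,\alpha_i)\bq(\alpha_i,\delta)=\prod_{j\in\I}(q_{ij}q_{ji})^{m_j}=\prod_{j\in\I}q_{ii}^{a_{ij}m_j}=1$ for every $i$, so $\delta$ is braiding-orthogonal to all simple roots and $q_{\delta\delta}^{\,2}=1$. Since $\roots_+^V$ is the infinite positive system of $\g(\ba)$, it contains a $\delta$-string $\gamma+n\delta$, $n\in\N_0$, of positive real roots, each of multiplicity one; let $\ell_n\in L$ be the unique PBW generator with $\deg\ell_n=\gamma+n\delta$. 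The $\ell_n$ are pairwise distinct, their $\N_0$-degrees $\operatorname{ht}(\gamma)+n\operatorname{ht}(\delta)$ grow linearly in $n$, and since $N_\ell\ge2$ for every $\ell\in L$ each $\ell_n$ may occur to the first power in the restricted PBW basis \eqref{eq:PBW-Kharchenko}. Consequently the ordered products $\ell_{k_1}\cdots\ell_{k_l}$ with $k_1<\dots<k_l$ are leading terms of distinct PBW monomials, hence linearly independent, and applying Lemma~\ref{lemma:rosso-lemma19-gral} with $y_n=\ell_n$ yields $\GK\cB(V)=\infty$.

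The step requiring the most care is the identification in the first two paragraphs: that the generalized root system of $V$ really is the affine root system of $\ba$, and in particular that every real root $\gamma+n\delta$ occurs as the degree of a Kharchenko PBW generator of $\cB(V)$. This is exactly where the equality $c_{ij}=a_{ij}$ (guaranteed by the normalization $a_{ij}\in(-\ord q_{ii},0]$) and the description of the Weyl groupoid of a Nichols algebra of diagonal type are used. I note that one could alternatively argue through the imaginary roots: $q_{\delta\delta}^{\,2}=1$ shows that infinitely many multiples $n\delta$ have trivial self-braiding and therefore infinite height, so that Lemma~\ref{lem:infinite-generators-pbw} would apply once their positive multiplicity is granted. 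The real-root route above is preferable, however, because Lemma~\ref{lemma:rosso-lemma19-gral} only demands infinitely many roots of linearly bounded height and sidesteps any control of the heights $N_\ell$.
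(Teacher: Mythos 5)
Your proof is correct and takes essentially the same route as the paper: both extract from the affine root system an infinite $\delta$-string $\gamma+n\delta$ of positive real roots (the paper takes $\gamma=\alpha$ simple, citing \cite[Proposition 6.3 d)]{K} for $\realroots+\delta=\realroots$), note that each such real root is the degree of a Kharchenko PBW generator whose $\N_0$-degree grows linearly in $n$, and conclude by Lemma~\ref{lemma:rosso-lemma19-gral}. The additional material in your first paragraph (admissibility of all reflections, $c_{ij}=a_{ij}$, identification of the real roots) is justification that the paper leaves implicit, and your overstated claim that $\roots^V_+$ equals the full root system of $\g(\ba)$ is harmless since you only use the real roots.
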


\begin{proof}
Let $\realroots $ denote the set of real roots corresponding to $\ba$.
There exists a positive imaginary root $\delta $ such that
$\realroots +\delta=\realroots $ \cite[Proposition 6.3 d)]{K}. Let $m$ be the height of $\delta $ and let
$\alpha $ be a simple root. Choose a homogeneous restricted PBW basis of $\NA(V)$.
Then for all $k\ge 0$ there exists a PBW generator $y_k$ of degree
$\alpha+k\delta $, hence $\deg y_k = mk + 1$.
Therefore $\GK\cB(V)=\infty$ by Lemma \ref{lemma:rosso-lemma19-gral}.
\end{proof}

An indecomposable generalized Cartan matrix is \emph{compactly hyperbolic} if it is of indefinite type
and every proper minor is of finite type. If $\ba\in\Z^{\theta\times\theta}$ is compactly hyperbolic, then $\theta \leq 5$. In fact, the classification of compactly hyperbolic generalized Cartan matrices is known \cite{carbone}; there are
the matrices $\begin{pmatrix} 2 & a \\ b & 2\end{pmatrix}$ with $ab > 4$;
31 matrices in $\Z^{3 \times 3}$;
3 matrices in $\Z^{4 \times 4}$;
1 matrix in $\Z^{5 \times 5}$.
To prove Conjecture \ref{conjecture:nichols-diagonal-finite-gkd} in the Cartan case, 
it would be enough to verify it for compactly hyperbolic generalized Cartan matrices with $3 \leq \theta \leq 5$, as the case $\theta = 2$
is taken care by Theorem \ref{th:infGK}.

\medbreak Back to the general diagonal type,
we distinguish three classes of Nichols algebras. Given $\bq$ as above, we say that

\begin{itemize} [leftmargin=*]\renewcommand{\labelitemi}{$\circ$}
\item $\bq$ is of \emph{torsion class} if $q_{ii}, q_{ij}q_{ji} \in \G_{\infty}$ for all $i\neq j \in \I$;

\medbreak
\item $\bq$ is \emph{generic}, if $q_{ii} \notin \G_{\infty}$, and $q_{ij}q_{ji}=1$ or $q_{ij}q_{ji}\notin \G_{\infty}$,
for all $i\ne j\in \I$.

\medbreak
\item $\bq$ is \emph{semigeneric}
if it is neither generic nor of torsion class.
\end{itemize}

\begin{remark}\label{rem:nichols-diagonal-finite-gkd} 

\

\begin{enumerate} [leftmargin=*]
\item If $\bq$ is of torsion class, then Conjecture \ref{conjecture:nichols-diagonal-finite-gkd} says that $\GK \NA(V) < \infty$
implies $\GK \NA(V)  = 0$. Indeed all roots are real by \cite{CH}, and they would have finite non-trivial 
order by Lemma \ref{lemmata:rosso} \ref{lemma:points-trivial-braiding},
hence \eqref{eq:gkd-formula} applies.

\medbreak
\item\label{item:torsion-X-finite}
If $\bq$ is of torsion class, then the set $\X$ defined after Theorem \ref{th:GK-reflections} is finite. Indeed,
there are finitely many matrices with the shape \eqref{eq:def-rho-ij}.

\medbreak
\item\label{item:rosso-aa} \cite{R quantum groups, AA} If $\bq$ is generic, then $\GK \NA(V) < \infty$ if and only if 
there exists a Cartan matrix of finite type $\ba = (a_{ij})$, with symmetrizing diagonal matrix $(d_i)$,
and $q\notin\G_{\infty}$ such that $q_{ii} = q^{2d_i}$ and $q_{ij}q_{ji} = q^{2d_ia_{ij}}$ for all $i\neq j\in \I$.
Thus Conjecture \ref{conjecture:nichols-diagonal-finite-gkd} holds in this case.

\medbreak
\item A semigeneric matrix with finite generalized root system is either of super type or else one of 
two exceptions of ranks 2 and 4:
\begin{align*}
\xymatrix{\overset{q} {\circ} \ar  @{-}[r]^{q^{-1}}  & \overset{\omega}{\circ},}& &
\xymatrix{\overset{q} {\circ} \ar  @{-}[r]^{q^{-1}}  & \overset{q}{\circ} \ar  @{-}[r]^{q^{-1}} &
\overset{-1} {\circ} \ar  @{-}[r]^{-q}  & \overset{-q^{-1}}{\circ}.}
\end{align*}
Here $\omega\in \G'_3$ and $q\notin \G_{\infty}$; the first corresponds to Yamane's exotic quantum groups \cite{Y}
while the second is the row 14 in \cite[Table 3]{H-classif}.
\end{enumerate}
\end{remark}

\subsection{Semigeneric diagonal type}\label{subsection:generic}

\medbreak 
Let us fix $\bq =(q_{ij})_{i, j\in \I}$  semigeneric with $\GK\NA(V) < \infty$.
Let $\J = \{i\in \I: q_{ii} \notin \G_{\infty}\}$ be the set of generic points of $\bq$ and let
$\J_1, \dots \J_t$ be the connected components of the generalized Dynkin diagram $\bq =(q_{ij})_{i, j\in \J}$.

\begin{lemma}\label{lemma:semigeneric}
  If $i\in \I$ and $j\in \J$, then there exists $h\in \N_0$ such that $q_{jj}^{-h} = q_{ij}q_{ji}$.
  In particular,  either $q_{ij}q_{ji} = 1$ or $\notin \G_{\infty}$.
  \end{lemma}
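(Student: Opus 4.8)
The plan is to exploit the reflectability of $V$ at the generic vertex $j$. Since $\GK\NA(V) < \infty$, Remark \ref{rem:all-reflections-gkd} guarantees that $V$ admits all reflections; in particular we can reflect $V$ at $j$. By the definition of reflectability, this means that for every $i\neq j$ there exists $n\in\N_0$ with $(n+1)_{q_{jj}}\bigl(1-q_{jj}^{\,n}q_{ij}q_{ji}\bigr)=0$. (For $i=j$ the asserted identity cannot hold, as $q_{jj}^{2}\notin\G_{\infty}$, so the statement is to be read for $i\neq j$, which is exactly the range in which $q_{ij}q_{ji}$ is relevant.)

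Here comes the key observation: because $j\in\J$, the scalar $q_{jj}$ is not a root of unity, whence $q_{jj}^{\,n+1}\neq 1$ and therefore $(n+1)_{q_{jj}}=\tfrac{q_{jj}^{n+1}-1}{q_{jj}-1}\neq 0$ for every $n\in\N_0$. Consequently the vanishing of the product above forces the second factor to vanish, i.e. $q_{jj}^{\,n}q_{ij}q_{ji}=1$, so that $q_{ij}q_{ji}=q_{jj}^{-n}$. Setting $h:=n\in\N_0$ yields the desired identity $q_{jj}^{-h}=q_{ij}q_{ji}$. For the final assertion: if $h=0$ then $q_{ij}q_{ji}=1$, while if $h>0$ then $q_{jj}^{-h}\notin\G_{\infty}$, since any nonzero power of the non-root-of-unity $q_{jj}$ is again not a root of unity.

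There is essentially no serious obstacle here; the only points requiring care are, first, correctly invoking Remark \ref{rem:all-reflections-gkd} so as to reflect precisely at the generic vertex $j$ (rather than at $i$), and second, the elementary but crucial fact that $q_{jj}\notin\G_{\infty}$ makes all the relevant $q_{jj}$-integers nonzero, which is what isolates the single factor $1-q_{jj}^{\,n}q_{ij}q_{ji}$ and pins down the exponent $h$. Should one prefer to avoid citing Remark \ref{rem:all-reflections-gkd}, the same conclusion can be reached directly: if no such $n$ existed for some $i$, then on the rank-two sub-braiding spanned by $x_j, x_i$ one would have $(k)_{q_{jj}}!\,\mu_k\neq 0$ for all $k\in\N$, so Lemma \ref{lemmata:rosso}\,\ref{item:diagonal-infiniteGK} would give $\GK\NA(V)=\infty$ (the Nichols algebra of the sub-braided vector space embeds in $\NA(V)$ and $\GK$ is monotone), contradicting the hypothesis.
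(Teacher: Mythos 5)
Your proof is correct and is essentially the paper's own argument: the paper's one-line proof (``By Lemma \ref{lemmata:rosso}'') is exactly your fallback route, and your main route through Remark \ref{rem:all-reflections-gkd} is the same content repackaged, since the condition ``we can reflect $V$ at $j$'' is by definition precisely the negation of the hypothesis of Lemma \ref{lemmata:rosso} \ref{item:diagonal-infiniteGK} for each pair $(x_j,x_i)$. In both versions the decisive point is the one you highlight: $q_{jj}\notin\G_{\infty}$ makes all $q_{jj}$-integers nonzero, which isolates the factor $1-q_{jj}^{n}q_{ij}q_{ji}$ and produces the exponent $h$.
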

\pf By Lemma \ref{lemmata:rosso}.
\epf

\begin{lemma}\label{lemma:semigeneric2}
If $i\notin \J$, $j\in \J$ and $q_{ij}q_{ji} \neq 1$, then either $\ord q_{ii} = 2$ and $q_{ij}q_{ji} = q_{jj}^{-h}$ with $h\in \I_2$;
or else $\ord q_{ii} = 3$ and $q_{ij}q_{ji} = q_{jj}^{-1}$.
  \end{lemma}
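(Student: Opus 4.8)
I want to prove Lemma \ref{lemma:semigeneric2}, a statement about the possible values of $q_{ij}q_{ji}$ when $i$ is a non-generic vertex, $j$ is a generic vertex, and these vertices are actually connected (i.e.\ $q_{ij}q_{ji} \neq 1$). Let me think about what tools I have.

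Since $j \in \J$, we have $q_{jj} \notin \G_\infty$. By Lemma \ref{lemma:semigeneric}, applied with the roles as stated there, I already know that $q_{ij}q_{ji} = q_{jj}^{-h}$ for some $h \in \N_0$; the hypothesis $q_{ij}q_{ji}\neq 1$ forces $h \geq 1$. So the content of the lemma is really a bound on $h$ together with a constraint relating $h$ to $\ord q_{ii}$. The natural place to get such constraints is the reflection machinery: since $\GK \NA(V) < \infty$, by Remark \ref{rem:all-reflections-gkd} the braided vector space $V$ admits all reflections, and reflecting at a generic vertex should impose finiteness conditions that rule out large $h$. Let me set up the reflection at the vertex $j$.

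\textbf{Key steps.} First I would compute the Cartan integer $c_{ji}$ of the generic vertex $j$ against $i$ using \eqref{eq:defcij}: since $q_{jj}\notin\G_\infty$, the factor $(n+1)_{q_{jj}}$ never vanishes, so $c_{ji} = -\min\{n : q_{jj}^n q_{ij}q_{ji} = 1\}$. Substituting $q_{ij}q_{ji} = q_{jj}^{-h}$ gives $q_{jj}^{n-h}=1$, and since $q_{jj}$ is not a root of unity this holds exactly when $n = h$; hence $c_{ji} = -h$. Next, the crucial constraint comes from looking at the reflection $\cR^j(\bq)$ at the generic vertex $j$ and examining the reflected diagonal entry at $i$, namely $t_{ii}$. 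Using \eqref{eq:def-rho-ij} with the substitution $q_{ij}q_{ji}=q_{jj}^{-h}$ and the value $c_{ji}=-h$, the off-diagonal data transforms in a controlled way; the point is that after reflecting, the \emph{new} entry $\qt_{ij}$ (or the diagonal entry associated to $i$) must again satisfy the semigeneric constraints of Lemma \ref{lemma:semigeneric} applied to the reflected diagram, since $\GK$ is preserved by Theorem \ref{th:GK-reflections}. Concretely, I expect that the reflected connection $t_{ij}t_{ji}$ equals $q_{jj}^{h}\,q_{ii}^{?}$ up to the Cartan data, and demanding that this be either $1$ or not a root of unity — combined with the fact that $q_{ii}$ \emph{is} a root of unity — pins down the allowed values. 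The interaction between the root-of-unity entry $q_{ii}$ and the generic entry $q_{jj}$ in the reflected matrix is what produces the dichotomy $\ord q_{ii}=2$ versus $\ord q_{ii}=3$.

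\textbf{The main obstacle.} The delicate part is the case analysis extracting exactly $\ord q_{ii}\in\{2,3\}$ with the stated bounds on $h$. I expect to use Lemma \ref{lemmata:rosso}, likely part \ref{item:diagonal-infiniteGK} or the relation \eqref{eq:def-muk-yk}: applied to the rank-$2$ sub-braiding on $\{i,j\}$ (with $i$ as the "$1$" index so that $q_{11}=q_{ii}$ is a root of unity and $\qt_{12}=q_{ij}q_{ji}=q_{jj}^{-h}$), finiteness of $\GK$ forces $(k)_{q_{ii}}^!\,\mu_k = 0$ for some $k$, i.e.\ some factor $1 - q_{ii}^a q_{jj}^{-h}$ must vanish; but since $q_{jj}\notin\G_\infty$ while $q_{ii}\in\G_\infty$, such a factor can vanish only if $h=0$, which contradicts $q_{ij}q_{ji}\neq1$ — so the control must instead come from the \emph{symmetric} condition on reflecting at $j$ and re-expanding $(\ad_c)$ from the $j$ side, where $q_{jj}$ is generic. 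Balancing these two viewpoints (finiteness seen from the root-of-unity vertex versus from the generic vertex, linked through the single reflection $\cR^j$) is where I anticipate the real work, and where the small finite list of admissible $(\ord q_{ii}, h)$ pairs emerges. I would organize the final argument as a short enumeration of the possible Cartan integers $c_{ij}$ (bounded because $q_{ii}$ has small order) against the value $c_{ji}=-h$, discarding every combination whose reflected diagram violates semigenericity.
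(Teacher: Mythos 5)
Your first step coincides with the paper's: Lemma \ref{lemma:semigeneric} gives $q_{ij}q_{ji}=q_{jj}^{-h}$ with $h\ge 1$, and the rest of the argument must bound $h$ and $\ord q_{ii}$. But the vertex you choose to reflect at is the wrong one, and this is a genuine gap, not a detail. Reflecting at the \emph{generic} vertex $j$ is vacuous: since $c_{ji}=-h$ (your computation of this is correct), one has $s_j(\alpha_i)=\alpha_i+h\alpha_j$, so the reflected diagonal entry is
$t_{ii}=q_{ii}\,(q_{ij}q_{ji})^{h}q_{jj}^{h^2}=q_{ii}q_{jj}^{-h^2+h^2}=q_{ii}$,
and the reflected edge is
$t_{ij}t_{ji}=(q_{ij}q_{ji})^{-1}q_{jj}^{-2h}=q_{jj}^{h-2h}=q_{jj}^{-h}$,
while $t_{jj}=q_{jj}$. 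In other words, because $q_{ij}q_{ji}=q_{jj}^{c_{ji}}$, the vertex $j$ behaves exactly as in Cartan type and $\cR^j$ fixes the Dynkin diagram. Your expectation that ``the reflected connection $t_{ij}t_{ji}$ equals $q_{jj}^{h}q_{ii}^{?}$'' is false --- no power of $q_{ii}$ ever enters --- so applying Lemma \ref{lemma:semigeneric} to $\cR^j(\bq)$ reproduces verbatim the constraint you started with and pins down nothing.

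The missing idea is to reflect at $i$, the non-generic vertex. There the Cartan integer in \eqref{eq:defcij} is $c_{ij}=-(N-1)$ with $N=\ord q_{ii}$: the factor $1-q_{ii}^{n}q_{jj}^{-h}$ indeed never vanishes (you observed this in your ``main obstacle'' paragraph, but read it as a problem rather than as the mechanism), so the vanishing comes from $(N)_{q_{ii}}=0$. This reflection genuinely changes the diagram: the edge becomes $q_{ii}^{2}q_{jj}^{h}$ and the vertex $j$ becomes $q_{ii}q_{jj}^{1-h(N-1)}$, so the root of unity and the generic parameter now mix. Either $h(N-1)=1$, giving $N=2$, $h=1$; or the new vertex $j$ is still generic and, by Theorem \ref{th:GK-reflections}, Lemma \ref{lemma:semigeneric} applies to $\cR^i(\bq)$: there is $t\in\N$ with $(q_{ii}q_{jj}^{1-h(N-1)})^{-t}=q_{ii}^{2}q_{jj}^{h}$. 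Since no nonzero power of $q_{jj}$ is a root of unity, one may compare exponents separately: $t(h(N-1)-1)=h$ and $q_{ii}^{-t}=q_{ii}^{2}$. The first equation forces $h(N-2)\le 1$, and a short check of the remaining cases yields exactly $(N,h)\in\{(2,2),(3,1)\}$, completing the dichotomy of the statement. Also note that your closing plan --- ``enumerate the possible $c_{ij}$, bounded because $q_{ii}$ has small order'' --- is circular: that $\ord q_{ii}\in\{2,3\}$ is precisely the conclusion to be proved, not an available hypothesis.
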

\pf First, there exists $h\in \N$ such that $q_{jj}^{-h} = q_{ij}q_{ji}$ by Lemma \ref{lemma:semigeneric}.
Let $N = \ord q_{ii}$.
We apply the reflection at $i$:
\begin{align*}
 \xymatrix{\overset{q_{ii}} {\circ} \ar  @{-}[r]^{q_{jj}^{-h}}  & \overset{q_{jj}}{\circ}&  \ar@/^1pc/@{<->}[r]^{i} &  &
\overset{q_{ii}} {\circ} \ar  @{-}[r]_{q_{ii}^2q_{jj}^{h} \quad}  & \overset{q_{ii}q_{jj}^{1-h(N - 1)}}{\circ}.}
\end{align*}
Then either $1 = h(N - 1)$ that gives $h= 1$, $N = 2$; or else
there exists $t\in \N$ such that $(q_{ii}q_{jj}^{1-h(N - 1)})^{-t} = q_{ii}^2q_{jj}^{h}$ by Lemma \ref{lemma:semigeneric}.
A straightforward analysis yields the claim.
\epf

As a consequence we derive the corresponding version of Theorem \ref{th:infGK} for semigeneric braidings. It will be useful for the proof of the general case.

\begin{coro}\label{coro:infGK-semigeneric}
Let $V$ be a braided vector space of \emph{semigeneric} diagonal type and dimension $2$ such that the $\GK$ of its Nichols algebra
$\NA(V)$ is finite. Then its generalized root system is finite.
\end{coro}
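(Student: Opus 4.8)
The plan is to organise the argument according to the number of generic vertices, i.e.\ according to $\vert\J\vert$, where $\J=\{i\in\I_2:q_{ii}\notin\G_\infty\}$. Since $\GK\NA(V)<\infty$, by Remark~\ref{rem:all-reflections-gkd} the space $V$ admits all reflections, and by Theorem~\ref{th:GK-reflections} every braiding in the orbit $\X$ again has finite $\GK$. Moreover the generalized root system is by definition an invariant of this orbit, so it will suffice to exhibit \emph{somewhere} in $\X$ a braiding whose root system is manifestly finite. I would first dispose of $\vert\J\vert=2$: Lemma~\ref{lemma:semigeneric} tells us that for $j\in\J$ and any $i$ the scalar $\qt_{ij}$ is either $1$ or not a root of unity, so $2\in\J$ would force $\qt_{12}\in\{1\}\cup(\ku^{\times}\setminus\G_\infty)$; but being semigeneric with both vertices generic (and connected) requires exactly $\qt_{12}\in\G_\infty\setminus\{1\}$, a contradiction. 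Hence $\vert\J\vert\le 1$.

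Next I would treat $\vert\J\vert=1$; after relabelling, say $q_{11}\notin\G_\infty$ and $q_{22}\in\G_\infty$. Since $V$ is connected, $\qt_{12}\neq1$, so Lemma~\ref{lemma:semigeneric2} (with $i=2$, $j=1$) forces the generalized Dynkin diagram to be exactly one of
\begin{align*}
\xymatrix{\overset{q} {\circ} \ar@{-}[r]^{q^{-1}} & \overset{-1}{\circ},} &&
\xymatrix{\overset{q} {\circ} \ar@{-}[r]^{q^{-2}} & \overset{-1}{\circ},} &&
\xymatrix{\overset{q} {\circ} \ar@{-}[r]^{q^{-1}} & \overset{\omega}{\circ},}
\end{align*}
where $q=q_{11}\notin\G_\infty$ and $\omega\in\G_3'$. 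Each of these is one of the finite-root-system semigeneric diagrams of Remark~\ref{rem:nichols-diagonal-finite-gkd}(4) (the first two of super type, the third being Yamane's rank $2$ exception), hence has a finite generalized root system. This can also be checked directly: the reflections close the Weyl groupoid orbit after finitely many steps (for instance, for the middle diagram one computes $c_{12}=-2$, $c_{21}=-1$, and finds $\X$ to consist of just two braiding matrices). This settles $\vert\J\vert=1$.

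For $\vert\J\vert=0$ both $q_{11},q_{22}\in\G_\infty$ while, by semigenericity and connectedness, $\qt_{12}\notin\G_\infty$; the idea is to reflect once and reduce to the previous case. Reflecting at vertex $1$ (possible by Remark~\ref{rem:all-reflections-gkd}), with $N=\ord q_{11}\ge 2$ one gets $c_{12}=-(N-1)$, and the reflected matrix $\cR^1(\bq)$ has diagonal entries $q_{11}$ and $q_{22}q_{11}\qt_{12}^{\,N-1}$ and edge $\qt_{12}^{-1}q_{11}^{2}$. Since $\qt_{12}\notin\G_\infty$, the first diagonal entry stays in $\G_\infty$ while the second falls outside $\G_\infty$, so $\cR^1(V)$ has exactly one generic vertex. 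As $\cR^1(V)$ still has finite $\GK$ and shares its generalized root system with $V$, the case $\vert\J\vert=1$ already treated shows this root system is finite.

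I expect the genuinely delicate point to be the verification, in the case $\vert\J\vert=1$, that the three admissible diagrams have finite generalized root system: this is where one must either invoke the classification \cite{H-classif} or carry out the (routine but not entirely trivial) bookkeeping of the reflection orbits, tracking how the non-root-of-unity parameter $q$ transforms under successive reflections and confirming that only finitely many braiding matrices arise. By contrast, the elimination of $\vert\J\vert=2$ and the reduction of $\vert\J\vert=0$ to $\vert\J\vert=1$ are short consequences of Lemmas~\ref{lemma:semigeneric} and~\ref{lemma:semigeneric2} together with the invariance of the root system under reflections.
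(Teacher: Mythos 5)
Your proof is correct and follows essentially the same route as the paper's: rule out two generic vertices via Lemma~\ref{lemma:semigeneric}, reduce the case of two non-generic vertices to the mixed case by reflecting at vertex $1$, then pin down the three possible diagrams via Lemma~\ref{lemma:semigeneric2} and conclude by \cite[Table 1]{H-classif}. The differences are purely organizational (the explicit case split on $\vert\J\vert$ and the spelled-out computation of $\cR^1(\bq)$), not mathematical.
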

\pf
We may assume that $q_{11} \in \G_{\infty}$, $q_{22} \notin \G_{\infty}$ up to reflection. Indeed, if neither $q_{11}$ nor $q_{22}$ 
belong to $\G_{\infty}$, then $V$ is generic by Lemma \ref{lemmata:rosso} \ref{item:diagonal-infiniteGK}.
So, either $q_{11} \in \G_{\infty}$ or else $q_{22} \in \G_{\infty}$. 
If both belong to $\G_{\infty}$, then $q_{ij}q_{ji}\notin \G_{\infty}$. 
Applying reflection at $1$, we have that the new $q_{22}\notin \G_{\infty}$. 

By Lemma \ref{lemma:semigeneric2}, the Dynkin diagram of $V$ is one of the following:
\begin{align*}
& \xymatrix{\overset{-1} {\circ} \ar  @{-}[r]^{q^{-1}}  & \overset{q}{\circ}}, &
& \xymatrix{\overset{-1} {\circ} \ar  @{-}[r]^{q^{-2}}  & \overset{q}{\circ}}, &
& \xymatrix{\overset{\zeta} {\circ} \ar  @{-}[r]^{q^{-1}}  & \overset{q}{\circ}}, &
& q\notin \G_{\infty}, \, \zeta\in\G_3'.
\end{align*}
All of them appear in \cite[Table 1]{H-classif}, so $V$ has a finite root system.
\epf

\subsection{Nichols algebras of indefinite Cartan type}\label{subsection:affine-indefinite}

Let $A$ be an indecomposable generalized Cartan matrix. 
Let $W$ be the corresponding Weyl group, see \cite{K}.

\begin{lemma} \label{le:indefinite}
Assume that $A$ is of indefinite type.
Let $Q$ be the root lattice
corresponding to $A$ and let $Q_+\subset Q$ be the submonoid generated by the
simple roots. Then $W\gamma \cap Q_+$ is infinite for all $\gamma \in Q_+- 0$.
\end{lemma}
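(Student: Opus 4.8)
The plan is to produce, for each $\gamma\in Q_+\setminus 0$, a single element $g\in W$ of infinite order whose repeated application drives $\gamma$ into $Q_+$ and keeps it there, so that the iterates $g^n\gamma$ already exhaust an infinite subset of $W\gamma\cap Q_+$. Fix the symmetric $W$-invariant bilinear form $(\cdot,\cdot)$ attached to a symmetrization of $A$, write $P=\{\sum_i c_i\alpha_i:c_i\in\mathbb{R}_{\ge 0}\}\subset Q\otimes\mathbb{R}$ for the real positive cone, so that $Q\cap P=Q_+$, and let $Z\subseteq P$ be the imaginary cone; it is $W$-invariant, pointed (being contained in the pointed cone $P$) and, because $A$ is of indefinite type, full-dimensional: by \cite[Thm.~4.3]{K} there is $\beta\in Q_+$ with $(\beta,\alpha_i)<0$ for all $i$, and any such $\beta$ lies in the interior of both $Z$ and $P$. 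Note also that by the same trichotomy no nonzero element of $Q_+$ lies in the radical of the form.

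For the expanding element take a Coxeter element $g$ (the product of all the simple reflections). Since $A$ is of indefinite type its spectral radius satisfies $\lambda:=\rho(g)>1$, in contrast to the finite and affine cases where $\rho(g)=1$. Because $g$ preserves the proper cone $Z$, Perron--Frobenius/Krein--Rutman theory gives that $\lambda$ is a real eigenvalue with an eigenvector $v\in P$; using the indecomposability of $A$ one gets that $\lambda$ is simple, strictly dominates every other eigenvalue in modulus, and that $v\in\operatorname{int}P$, i.e. $v$ has strictly positive coordinates. Let $\pi$ be the associated spectral projection onto $\mathbb{R}v$, so that $g^nu=\lambda^n\pi(u)+o(\lambda^n)$ for every $u$. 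If $\pi(\gamma)$ is a \emph{positive} multiple of $v$, then for all large $n$ the vector $g^n\gamma$ has strictly positive coordinates, hence $g^n\gamma\in Q\cap\operatorname{int}P\subseteq Q_+$; these iterates are pairwise distinct since their norms grow like $\lambda^n$, and they furnish infinitely many elements of $W\gamma\cap Q_+$.

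It remains to arrange that the projection points in the positive direction, and this is the heart of the matter. Writing $w_g$ for the left $\lambda$-eigenvector normalized by $w_g(v)=1$, the coefficient of $v$ in $\pi(u)$ equals $w_g(u)$, and $w_g$ lies in the interior of the dual cone $Z^{*}$. Since replacing $\gamma$ by any point of $W\gamma$ leaves the orbit unchanged, it suffices to find $\gamma'\in W\gamma$ with $w_g(\gamma')>0$, i.e. to show $W\gamma\not\subseteq\{w_g\le 0\}$; equivalently one may let $g$ range over its conjugates $hgh^{-1}$, whose dominant left eigenvectors are $w_g\circ h^{-1}$, and ask that the half-spaces $\{w_g\circ h^{-1}>0\}$ cover $P\setminus 0$. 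When $\gamma\in\operatorname{int}Z$ this is immediate since $w_g>0$ there, so the delicate case is $\gamma\in\partial Z$, and I expect this boundary case to be the main obstacle. I would close it through the duality of the imaginary cone --- showing that the $W$-orbit of the interior functional $w_g$ generates a cone whose polar is exactly $\overline{Z}$, so that a $\gamma$ escaping every positive half-space is forced into $\overline{Z}$ and the problem reduces to the boundary --- together with playing $g$ off against $g^{-1}$, which has the same dominant eigenvalue $\lambda$ but a different null eigendirection, exactly as in the rank-two case where the two resulting half-spaces already cover the positive quadrant.

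As a consistency check, and as an independent treatment when $\gamma$ is a root, the statement is transparent. If $\gamma$ is a real root then $-\gamma=s_\gamma\gamma\in W\gamma$, so negation carries $W\gamma\cap Q_+$ bijectively onto $W\gamma\cap(-Q_+)$; since $\roots=\roots\cap Q_+\sqcup\roots\cap(-Q_+)$ contains no root of mixed sign and $W\gamma$ is infinite in indefinite type, both halves are infinite. If $\gamma$ is a positive imaginary root, then $W$ stabilizes the set of positive imaginary roots, whence $W\gamma\subseteq Q_+$, and $W\gamma$ is again infinite (e.g. $g^n\gamma$ are distinct by the growth argument above, as $\gamma$ has nonzero $v$-component). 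These special cases both support the expanding-element approach and indicate that the only real work lies in the positivity of the projection for a general $\gamma\in\partial Z$.
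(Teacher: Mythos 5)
Your proposal has a genuine gap, and you in fact name it yourself: everything hinges on producing some $\gamma'\in W\gamma$ with $w_g(\gamma')>0$, and outside the interior of the imaginary cone you offer only a plan (``I would close it through the duality of the imaginary cone \dots together with playing $g$ off against $g^{-1}$''), not an argument. This is not a peripheral detail. The lemma must hold for \emph{every} $\gamma\in Q_+-0$, not merely for roots of the Kac--Moody system: in the application (Lemma \ref{le:dimindefCartan}) $\gamma$ is the degree of a PBW generator of $\NA(V)$ and need not be a real or imaginary root of $A$ at all, so your two clean special cases at the end do not cover the cases the paper actually needs. Moreover, the spectral input you invoke is itself a chain of nontrivial theorems asserted without proof: that a Coxeter element of an indecomposable indefinite generalized Cartan matrix has spectral radius $\lambda>1$; that $\lambda$ is a simple eigenvalue \emph{strictly} dominating all other eigenvalues in modulus; and that its eigenvector lies in $\operatorname{int}P$. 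Krein--Rutman applied to a $g$-invariant proper cone only yields \emph{some} eigenvector in the cone for the spectral radius; simplicity and strict dominance require a strong positivity (cone-irreducibility) property of $g$ that indecomposability of $A$ does not supply and that you do not verify, and strict dominance is delicate outside the hyperbolic signature-$(n-1,1)$ case. Without it, the asymptotics $g^nu=\lambda^n\pi(u)+o(\lambda^n)$, the eventual positivity of coordinates, and even the distinctness of the iterates are all unjustified.

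For comparison, the paper's proof is a few lines and completely elementary: assume $W\gamma\cap Q_+$ is finite and pick $\beta=\sum_{\alpha\in\pi}c_\alpha\alpha$ in it maximal for the dominance order, i.e.\ $w\gamma-\beta\notin Q_+-0$ for all $w\in W$. If $\langle\beta,\alpha^\vee\rangle<0$ for some simple $\alpha$, then $s_\alpha\beta=\beta+|\langle\beta,\alpha^\vee\rangle|\alpha$ would be a strictly larger element of $W\gamma\cap Q_+$; hence maximality forces $A(c_\alpha)_{\alpha}\ge 0$ with $(c_\alpha)_\alpha\ge 0$ and nonzero, which is impossible for indefinite $A$ by Kac's trichotomy \cite[Thm.\ 4.3]{K}. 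Note that the trichotomy is exactly the fact you already cite to get an interior point of the imaginary cone; applied instead at a maximal element of the orbit, it finishes the proof with no spectral theory, no Coxeter elements, and no cone duality.
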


\begin{proof}
Let $\pi $ denote the set of simple roots.
Let $\gamma \in Q_+- 0$. Assume that $W\gamma \cap Q_+$ is finite.
Let $\beta =\sum _{\alpha \in \pi }c_\alpha \alpha \in W\alpha \cap Q_+$,
where $c_\alpha \ge 0$ for all $\alpha \in \pi $, such that
$w\gamma -\beta \notin Q_+- 0$ for all $w\in W$.
Then for all $\alpha \in \pi $ there exists $m_\alpha\ge 0$ such that
$s_\alpha \beta =\beta -m_\alpha \alpha $.
On the other hand, $s_\alpha \beta =\beta -\sum _{\alpha '\in \pi }c_{\alpha
'}a_{\alpha \alpha '}\alpha $, that is, $A(c_{\alpha '})_{\alpha '\in \pi }\ge
0$. Since $A$ is of indefinite type
and $(c_{\alpha '})_{\alpha '\in \pi }$, $A(c_{\alpha '})_{\alpha '\in \pi }$ have only non-negative entries, we have a contradiction.
\end{proof}

Let  $V$ be a braided vector
space of Cartan type with Cartan matrix $A$.

\begin{lemma} \label{le:dimindefCartan}
  If there exists a root $\gamma \in \roots^V _+$ of $\cB (V)$
such that $q_{\gamma,\gamma}=1$, then $\GK\cB(V)=\infty$.
\end{lemma}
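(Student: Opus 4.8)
The plan is to exhibit infinitely many PBW generators of infinite height in the Nichols algebra of some braided vector space obtained from $V$ by reflections, and then to invoke Lemma \ref{lem:infinite-generators-pbw} together with the invariance $\GK\cB(\cR^i V)=\GK\cB(V)$ of Theorem \ref{th:GK-reflections}.

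First I would record two elementary facts. Recall $N_\ell=\min\{n:(n)_{q_{\deg\ell,\deg\ell}}=0\}$; since $(n)_1=n\neq 0$ for $n\ge 1$, any $\Z^\theta$-homogeneous PBW generator $\ell$ with $q_{\deg\ell,\deg\ell}=1$ has $N_\ell=\infty$. Hence every positive root $\beta$ with $q_{\beta\beta}=1$ forces an infinite-height PBW generator. Second, for $V$ of Cartan type the reflection condition always holds: for $j\neq i$, \eqref{eq:cartan} gives $1-q_{ii}^{n}q_{ij}q_{ji}=1-q_{ii}^{n+a_{ij}}$, which vanishes at $n=-a_{ij}\ge 0$, so $V$ admits all reflections and every $U\in\X$ carries the same Cartan matrix $A$ with Weyl group $W$.

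Next I would check that the assignment $\beta\mapsto q_{\beta\beta}=\bq(\beta,\beta)$ is $W$-invariant. Writing $b(\mu,\nu)=\bq(\mu,\nu)\bq(\nu,\mu)$, a direct computation with $s_i\alpha_j=\alpha_j-a_{ij}\alpha_i$ and the Cartan identity $q_{ij}q_{ji}=q_{ii}^{a_{ij}}$ gives $\bq(s_i\alpha_j,s_i\alpha_j)=q_{jj}$ and $b(s_i\alpha_j,s_i\alpha_k)=b(\alpha_j,\alpha_k)$; since $\bq(\beta,\beta)$ is determined by these data, one gets $\bq(s_i\beta,s_i\beta)=\bq(\beta,\beta)$ and hence $\bq(w\beta,w\beta)=\bq(\beta,\beta)$ for all $w\in W$. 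Combined with \eqref{eq:def-rho-ij}, which shows that a root $\beta$ of $\cR^i(V)$ has the same value $\bq(s_i\beta,s_i\beta)$ as its image, this means the value $1$ at $\gamma$ propagates along the whole orbit and through all reflections. In particular $\gamma$ cannot be a real root: a real root can be reflected to a simple root $\alpha_i$ of some $U\in\X$, forcing the diagonal entry there to equal $\bq(\gamma,\gamma)=1$, contradicting $q_{ii}\neq 1$. Thus $\gamma$ is imaginary.

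Finally, since $A$ is of indefinite type, Lemma \ref{le:indefinite} gives that $W\gamma\cap Q_+$ is infinite, and by the previous paragraph every element of it has $q$-value $1$. The main obstacle is to realize infinitely many of these lattice vectors simultaneously as positive roots of infinite height in one fixed $U\in\X$, since a priori the reflection path carrying $\gamma$ to $w\gamma$ lands in an object depending on $w$. I expect to resolve this by using that the imaginary roots lie in the $W$-stable imaginary cone contained in $Q_+$, so that the groupoid reflections permute the positive imaginary roots among themselves and $W\gamma\cap Q_+\subseteq\roots^V_+$; each such root is a PBW generator of infinite height by the first paragraph. Lemma \ref{lem:infinite-generators-pbw} then yields $\GK\cB(V)=\infty$. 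Pinning down that the infinitely many positive imaginary roots of value $1$ genuinely occur as PBW generators of a single $\cB(U)$, rather than being dispersed across the groupoid, is the part I expect to require the most care.
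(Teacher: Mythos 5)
Your overall strategy is the same as the paper's (reduce to indefinite type, use $W$-invariance of $\beta\mapsto q_{\beta\beta}$, get infinitude of the orbit from Lemma \ref{le:indefinite}, and conclude with Lemma \ref{lem:infinite-generators-pbw}), but two steps are genuinely missing. The first is the affine case. Your deductions rule out finite type only: from ``$\gamma$ is not real'' you pass directly to ``since $A$ is of indefinite type'', yet an affine matrix $A$ also has imaginary roots, and the hypothesis of the lemma can perfectly well hold in affine type (e.g.\ $q_{\delta\delta}=1$ for the basic imaginary root $\delta$). In that case Lemma \ref{le:indefinite} is simply false --- $W$ fixes $\delta$, so $W\gamma\cap Q_+$ can be a single point --- and your orbit argument collapses. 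The paper disposes of this case in one line by citing Proposition \ref{prop:dimaffineCartan}; your proof needs that sentence to cover the stated hypothesis.

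The second, more serious gap is exactly the step you flag yourself: proving that infinitely many of the lattice points $w\gamma\in Q_+$ actually lie in $\roots^V_+$, i.e.\ occur as degrees of Kharchenko PBW generators of $\cB(V)$. The imaginary cone cannot do this. That cone is an object attached to the Kac--Moody algebra $\g(A)$, and its $W$-stability is a statement about a convex cone of lattice points; it says nothing about which of those points are degrees of PBW generators. These are two different notions of ``root'', with no a priori inclusion between them away from the real roots (for instance, $\roots^V_+$ may contain multiples $N\beta$ of its own elements, cf.\ Lemmas \ref{le:yNneq0isroot} and \ref{le:doubleroot}, a phenomenon governed by the braiding matrix rather than by $A$ alone). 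What actually closes the gap --- and is what the paper cites --- is the invariance theorem of \cite{H-inv}: $s_\alpha\bigl(\roots^V_+-\{\alpha\}\bigr)=\roots^V_+-\{\alpha\}$ for every simple root $\alpha$. Here one also uses that in Cartan type each reflection $\cR^i(V)$ has the same generalized Dynkin diagram as $V$, hence the same root system, so this stability takes place inside the single set $\roots^V_+$; this is precisely what dissolves your worry about the orbit being ``dispersed across the groupoid''. Since, by your invariance computation, every element of $W\gamma$ has $q$-value $1$ and therefore never equals a simple root, induction on the length of $w$ gives $W\gamma\subseteq\roots^V_+$, all of infinite height, and Lemma \ref{lem:infinite-generators-pbw} concludes. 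Without this theorem, or an equivalent substitute, the argument remains incomplete.
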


\begin{proof}
The Cartan matrix $A$ is not of finite type,
since otherwise $q_{ii}\ne 1$ for all $i$ and hence $q_{\gamma,\gamma}\ne 1$ for all roots $\gamma$.
If $A$ is of affine type, then $\GK \NA(V)=\infty$ by Proposition
\ref{prop:dimaffineCartan}. 
We assume then that $A$ is of indefinite type.
By \cite{H-inv}, $s_\alpha (\roots^V_+- \{\alpha \})
=\roots^V_+- \{\alpha \}$ for all simple roots $\alpha $.
Since $q_{w\gamma, w\gamma}=q_{\gamma,\gamma}=1$ for all $w\in W$,
all root vectors of degree $w\gamma $ with $w\in W$ have infinite height.
By Lemma~\ref{le:indefinite}, $W\gamma $ is infinite.
Thus $\cB (V)$ has a restricted homogeneous PBW
basis containing infinitely many PBW generators having infinite height. The claim follows by Lemma \ref{lem:infinite-generators-pbw}.
\end{proof}

\subsection{Braided coideal subalgebras}
Just in this Subsection, the field $\Bbbk$ is arbitrary. 
Let $H$ be a Hopf algebra with bijective antipode.

\begin{pro} \label{pr:K/I} \cite[Prop.\,2.1]{GH}.
Let $B$ be a bialgebra in $\ydh$, let $K$ be a subalgebra of $B$ and let  
$I$ be a subobject of $K$ in $\ydh$, such that it is a coideal of $B$, an ideal of $K$ and
\begin{align}\label{eq:coideal-subalgebra}
\Delta (K)\subseteq K\otimes K+I\otimes B.
\end{align}
Then $K/I$ inherits a structure of bialgebra  in $\ydh$ from $B$. 
\end{pro}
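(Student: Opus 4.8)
The plan is to realize $K/I$ simultaneously as a quotient algebra of $K$ and as a subcoalgebra of the quotient coalgebra $B/I$, and then to check that these two structures are mutually compatible. Write $\pi\colon B\to B/I$ for the canonical projection; since $I\subseteq K\subseteq B$ are all subobjects in $\ydh$, the quotient $B/I$ is a Yetter-Drinfeld module and $\pi$ is a morphism in $\ydh$. As $I$ is a coideal of $B$, the comultiplication and counit of $B$ descend to a coalgebra structure $(\Delta_{B/I},\varepsilon_{B/I})$ on $B/I$ in $\ydh$, characterized by $\Delta_{B/I}\,\pi=(\pi\otimes\pi)\Delta$ and $\varepsilon_{B/I}\,\pi=\varepsilon$. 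On the other hand, $I$ is an ideal of $K$ and a subobject, so $K/I$ is an algebra in $\ydh$, with unit $\pi(1)$ (note $1\in K$ and $1\notin I$, since $\varepsilon(1)=1$ while $\varepsilon(I)=0$), and $\pi|_K\colon K\to K/I$ is a morphism of algebras in $\ydh$. Because $I\subseteq K$, this restriction identifies $K/I$ with the subobject $\pi(K)\subseteq B/I$.

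First I would show that $\pi(K)$ is a subcoalgebra of $B/I$; this is precisely where hypothesis \eqref{eq:coideal-subalgebra} is used. For $x\in K$ one has $\Delta(x)\in K\otimes K+I\otimes B$, and applying $\pi\otimes\pi$ annihilates the second summand because $\pi(I)=0$, so
\begin{align*}
\Delta_{B/I}(\pi(x))=(\pi\otimes\pi)\Delta(x)\in \pi(K)\otimes\pi(K).
\end{align*}
Hence $\Delta_{B/I}(\pi(K))\subseteq\pi(K)\otimes\pi(K)$, and with $\varepsilon_{B/I}(\pi(K))\subseteq\ku$ this shows $\pi(K)\cong K/I$ is a subcoalgebra of $B/I$ in $\ydh$. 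Coassociativity and the counit axiom for $K/I$ are then inherited for free from $B/I$, so no further verification of the coalgebra axioms is required. Denoting the induced comultiplication and counit by $\bar\Delta$ and $\bar\varepsilon$, we have concretely $\bar\Delta(\pi(x))=(\pi\otimes\pi)\Delta(x)$ and $\bar\varepsilon(\pi(x))=\varepsilon(x)$ for $x\in K$.

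It remains to verify the bialgebra compatibility in $\ydh$, namely that $\bar\Delta$ and $\bar\varepsilon$ are morphisms of algebras, $K/I\otimes K/I$ carrying the braided tensor product algebra structure. For $\bar\varepsilon$ this is immediate from multiplicativity of $\varepsilon$ and $\Delta(1)=1\otimes 1$; for $\bar\Delta$ I expect the main obstacle to be controlling the product $\Delta(x)\Delta(y)$ computed in the braided algebra $B\otimes B$. Writing, for $x,y\in K$, $\Delta(x)=u_x+v_x$ and $\Delta(y)=u_y+v_y$ with $u_x,u_y\in K\otimes K$ and $v_x,v_y\in I\otimes B$, the bialgebra axiom for $B$ gives $\Delta(xy)=\Delta(x)\Delta(y)=u_xu_y+u_xv_y+v_xu_y+v_xv_y$. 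The key point is that each of the last three terms lies in $I\otimes B$: in the braided product $(a\otimes b)(c\otimes d)=a\,(b_{(-1)}\cdot c)\otimes b_{(0)}d$ the first tensorand belongs to $I$ as soon as $a$ or $c$ lies in $I$ (the other lying in $K$), using $H$-stability of $I$ and $K$ together with the fact that $I$ is an ideal of $K$. Applying $\pi\otimes\pi$ therefore kills these three terms, so $\bar\Delta(\pi(xy))=(\pi\otimes\pi)(u_xu_y)$, whereas $\bar\Delta(\pi(x))\bar\Delta(\pi(y))=(\pi\otimes\pi)(u_x)\,(\pi\otimes\pi)(u_y)=(\pi\otimes\pi)(u_xu_y)$ because $\pi\otimes\pi\colon K\otimes K\to K/I\otimes K/I$ is an algebra morphism for the braided products (here one uses that $K\otimes K$ is a subalgebra of $B\otimes B$). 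Since $\pi(xy)=\pi(x)\pi(y)$, comparing these two expressions yields multiplicativity of $\bar\Delta$, and compatibility with the unit follows from $\Delta(1)=1\otimes 1$. This establishes that $K/I$ is a bialgebra in $\ydh$.
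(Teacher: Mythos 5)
Your proof is correct and takes essentially the same route as the paper: the paper's commutative-diagram chase is exactly your identification of $K/I$ as a quotient algebra of $K$ and, via \eqref{eq:coideal-subalgebra} and the fact that $\pi\otimes\pi$ kills $I\otimes B$, as a subcoalgebra of $B/I$. The only difference is one of detail --- where the paper asserts that the ``compatibilities follow at once from those of $B$,'' you explicitly verify multiplicativity of the induced coproduct by decomposing $\Delta(x)=u_x+v_x$ and checking that the braided products involving $I\otimes B$ stay in $I\otimes B$, which is a correct elaboration of the same argument.
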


By \eqref{eq:coideal-subalgebra}, $K$ is a right coideal subalgebra of $B$.

\begin{proof} The existence of $\Delta$ is verified by usual chasing in the following commutative diagram:
\begin{align*}
\xymatrix@R-12pt{
B \ar@{->}[rrrr]^{\Delta} \ar@{->>}[dddd]^{\pi} \ &&&& B \otimes B  \ar@{->>}[dddd]^{\pi \otimes \pi}
\\
& K \ar@{_(->}[lu] \ar@{->}[rr]^{\Delta} \ar@{->>}[dd]^{\pi} && K\otimes K+I\otimes B \ar@{^(->}[ru] \ar@{->>}[dd]^{\pi \otimes \pi} & 
\\ \\
& K/I \ar@{_(->}[ld] \ar@{-->}[rr]^{\Delta} && K/I\otimes K/I \ar@{^(->}[rd] & 
\\
B/I \ar@{->}[rrrr]^{\Delta} \ &&&& B/I \otimes B/I}
\end{align*}
The associativity and compatibilities follow at once from those of $B$.
\end{proof}

We now apply Proposition~\ref{pr:K/I} in the following context. Recall that $\alpha_1,\alpha_2$ is the canonical basis of $\Z ^2$.
Let
$V=V_1\oplus V_2$
be a direct sum in $\ydh$. Then   $\NA (V)$  has a unique
$\N _0^2$-grading (as a Hopf algebra in $\ydh$) 
\begin{align*}
\NA (V) = \bigoplus_{\alpha \in  \N _0^2} \NA^{\alpha} (V)
\end{align*}
such that $\deg V_1=\alpha_1$ and $\deg V_2=\alpha_2$. Let $r\in \Q_{\ge 0}$. We set
\begin{align*}
B_{\geq r} &= \bigoplus_{\substack{\alpha = a_1\alpha_1+a_2\alpha_2 \in  \N _0^2: \\ a_1\ge ra_2}} \NA^{\alpha} (V), &
B_{> r} &= \bigoplus_{\substack{\alpha = a_1\alpha_1+a_2\alpha_2 \in  \N _0^2: \\ a_1 > ra_2}} \NA^{\alpha} (V),
\\
K_{\geq r}&=\{x\in \NA (V)\,|\, \Delta (x)\in B_{\geq r}\ot \NA (V)\},&
K_{>r}&=K_{\geq r}\cap B_{>r}.
\end{align*}

\begin{pro}
Let $r\in \Q $ with $r\ge 0$. Then the braided
bialgebra structure of $\NA (V)$
induces a braided Hopf algebra structure on $K_{\geq r}/K_{>r}$.
\end{pro}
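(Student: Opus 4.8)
The plan is to apply Proposition~\ref{pr:K/I} with $B = \NA(V)$, $K = K_{\geq r}$ and $I = K_{>r}$, so the whole task reduces to verifying the four hypotheses of that Proposition in this concrete setting. First I would record the algebra structure: $K_{\geq r}$ is a subalgebra because the grading is an algebra grading, so if $x,y$ are homogeneous with $x\in B_{\geq r}$ and $y\in B_{\geq r}$ then $xy\in B_{\geq r}$, and the defining condition $\Delta(x)\in B_{\geq r}\ot\NA(V)$ is multiplicative since $\Delta$ is an algebra map and $B_{\geq r}\ot\NA(V)$ is closed under the (braided) product of $\NA(V)\ot\NA(V)$; the latter uses that $B_{\geq r}$ is itself a subalgebra. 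Thus $K_{\geq r}$ is a subalgebra, and one checks directly that it is a right coideal, i.e. $\Delta(K_{\geq r})\subseteq K_{\geq r}\ot\NA(V)$, which is essentially built into the definition. Both $K_{\geq r}$ and $K_{>r}$ are subobjects in $\ydh$ because the $\N_0^2$-grading is a grading in $\ydh$ and the conditions defining them are stable under the Yetter--Drinfeld structure.

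Next I would verify that $I = K_{>r}$ is an ideal of $K = K_{\geq r}$. Here I would use that $B_{>r}$ is an ideal of $B_{\geq r}$ inside $\NA(V)$: multiplying a bidegree with $a_1 > r a_2$ by one with $a_1'\ge r a_2'$ gives $a_1 + a_1' > r(a_2+a_2')$, so $B_{\geq r}\cdot B_{>r}\subseteq B_{>r}$ and symmetrically on the other side. Intersecting with the coideal condition, $K_{>r} = K_{\geq r}\cap B_{>r}$ is then stable under multiplication by $K_{\geq r}$, hence a two-sided ideal of $K_{\geq r}$.

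The main obstacle, and the heart of the matter, is the coideal condition \eqref{eq:coideal-subalgebra}: I must show $\Delta(K_{\geq r})\subseteq K_{\geq r}\ot K_{\geq r} + K_{>r}\ot\NA(V)$ and that $K_{>r}$ is a coideal of $\NA(V)$. The plan is to take a homogeneous $x\in K_{\geq r}$ and expand $\Delta(x) = \sum x_{(1)}\ot x_{(2)}$ into bihomogeneous pieces, noting that the total bidegree of $x_{(1)}$ plus that of $x_{(2)}$ equals the bidegree of $x$. Since $x\in K_{\geq r}$ we already know each $x_{(1)}\in B_{\geq r}$. For each term I would distinguish two cases according to whether $x_{(1)}$ lies strictly above the slope (then $x_{(1)}\ot x_{(2)}\in K_{>r}\ot\NA(V)$, since one also checks $\Delta(x)\in B_{\geq r}\ot\NA(V)$ forces the left tensorand into $K_{\geq r}$ via coassociativity) or exactly on the slope. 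The key lemma to isolate and prove is that whenever $x_{(1)}$ sits on the slope $a_1 = r a_2$, the corresponding $x_{(2)}$ again lies in $K_{\geq r}$; this follows by applying $(\Delta\ot\id)\Delta(x) = (\id\ot\Delta)\Delta(x)$ and reading off that $\Delta(x_{(2)})\in B_{\geq r}\ot\NA(V)$, because the slope of $x_{(1)}$ plus the slope constraint from $x\in K_{\geq r}$ leaves exactly this condition on $x_{(2)}$. Coassociativity is thus the decisive tool. Once \eqref{eq:coideal-subalgebra} is established, the fact that $K_{>r}$ is a coideal (i.e. $\Delta(K_{>r})\subseteq K_{>r}\ot\NA(V) + K_{\geq r}\ot K_{>r}$ and $\varepsilon(K_{>r})=0$) follows by the same bidegree bookkeeping, with $\varepsilon(K_{>r})=0$ clear since elements of positive bidegree have zero counit. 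Proposition~\ref{pr:K/I} then endows $K_{\geq r}/K_{>r}$ with a braided bialgebra structure; that it is in fact a braided Hopf algebra follows because $K_{\geq r}/K_{>r}$ inherits an $\N_0$-grading (by the single parameter $a_2$, say, on the slope) with one-dimensional degree-zero part $\ku$, and a connected graded braided bialgebra automatically has an antipode.
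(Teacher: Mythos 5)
Your proposal is correct and follows essentially the same route as the paper: both reduce to Proposition~\ref{pr:K/I} with $B=\NA(V)$, $K=K_{\geq r}$, $I=K_{>r}$, verify the subalgebra and ideal conditions by $\N_0^2$-graded bookkeeping, establish the crucial inclusion $\Delta(K_{\geq r})\subseteq K_{\geq r}\ot K_{\geq r}+K_{>r}\ot\NA(V)$ by combining coassociativity (the right-coideal property) with the on-slope/above-slope case analysis of bidegrees, and finally obtain the antipode from connectedness of a graded braided bialgebra (the paper cites \cite[5.2.10]{Mo}, using the total $\N_0$-grading rather than your $a_2$-grading, an immaterial difference).
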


\begin{proof} 
We claim that:
\begin{enumerate}[leftmargin=*,label=\rm{(\roman*)}]
\item\label{item:GH-appl1} $K_{\geq r}\subseteq B_{\geq r}$,

\item\label{item:GH-appl2} $K_{\geq r}$ is a subalgebra of $\NA (V)$ in $\ydh$,

\item\label{item:GH-appl3} $\Delta (K_{\geq r})\subseteq
K_{\geq r}\otimes K_{\geq r}+K_{>r}\otimes \NA (V)$,

\item\label{item:GH-appl4} $K_{>r}$ is an ideal of $K_{\geq r}$ and a coideal of $\NA (V)$ in $\ydh$.
\end{enumerate}

For \ref{item:GH-appl1}, apply $(\id \ot \varepsilon)$ to the  inclusion defining $K_{\geq r}$. Now \ref{item:GH-appl2} follows since the multiplication and the Yetter-Drinfeld structure of $\NA (V)$
are $\N _0^2$-graded. 

For \ref{item:GH-appl3}, note that $K_{\geq r}$ is a right coideal since $\Delta$ is coassociative. Using this fact and that $\Delta$ is $\N_0^2$-graded,
\begin{align*}
\Delta(K_{\ge r})&\subseteq K_{\ge r}\otimes B_{\ge r} +K_{>r}\otimes B(V).
\end{align*}
Indeed, let $x\in K_{\ge r}$ of degree $a_1\alpha_1+a_2\alpha_2$. 
We write $\Delta(x)=\sum_{i} y_i\otimes z_i$ with $y_i,z_i$ $\N_0^2$-homogeneous. If $y_i$, $z_i$ have degree $b_1\alpha_1+b_2\alpha_2$, $c_1\alpha_1+c_2\alpha_2$, then either $b_1>rb_2$ so $y_i\ot z_i \in K_{>r}\otimes B(V)$, or else $b_1=rb_2$, in which case $c_1=a_1-b_1\ge ra_2-rb_2=rc_2$ and $y_i\ot z_i \in K_{\ge r}\otimes B_{\ge r}$. Now
\begin{multline*}
(\Delta \ot \id ) \Delta (K_{\geq r}) \subseteq
(\Delta \ot \id )(K_{\geq r}\ot B_{\geq r} +K_{>r}\ot \NA (V))\\
 \subseteq K_{\geq r}\ot K_{\geq r} \ot B_{\geq r}
+ K_{\geq r}\ot K_{>r}\ot \NA (V)
+K_{>r}\ot \NA (V)\ot \NA (V).
\end{multline*}
We apply $(\id \ot \id \ot \varepsilon)$ to the previous inclusion and 
get \ref{item:GH-appl3}.

Finally we prove \ref{item:GH-appl4}: Let $x\in K_{>r}$, $y\in K_{\geq r}$, we may assume they are homogeneous of degrees $\alpha=a_1\alpha_1+a_2\alpha_2$, $\beta=b_1\alpha_1+b_2\alpha_2$, 
so $a_1>ra_2$, $b_1\geq rb_2$. Hence $xy\in K_{\geq r}$ by \ref{item:GH-appl2}, and $xy$ has degree $\alpha+\beta=(a_1+b_1)\alpha_1+(a_2+b_2)\alpha_2$ with $a_1+b_1 >r(a_2+b_2)$, so $xy\in K_{>r}$. Analogously, $yx\in K_{>r}$. Thus $K_{>r}$ is an ideal of $K_{\geq r}$. 
Now $\Delta(x)\in K_{\geq r}\otimes K_{\geq r}+K_{>r}\otimes \NA (V)$ by \ref{item:GH-appl3}.
For each $u\ot v \in K_{\geq r}\otimes K_{\geq r}$ appearing in $\Delta(x)$, $u$, $v$ homogeneous of degrees $c_1\alpha_1+c_2\alpha_2$, $d_1\alpha_1+d_2\alpha_2$, we have that $c_1\geq rc_2$, $d_1\geq rd_2$. As $\Delta$ is $\N_0^2$-graded, $c_1+d_1=a_1$, $c_2+d_2=a_2$, so either $c_1> rc_2$ or else $d_1> rd_2$; in other words, either $u\in K_{>r}$ or else $v\in K_{>r}$, so $\Delta(x)\in K_{\geq r}\otimes K_{>r}+K_{>r}\otimes \NA (V)$. Thus $K_{>r}$ is a coideal of $\NA (V)$ in $\ydh$.

\medbreak

Hence we may apply Proposition~\ref{pr:K/I} and $K_{\geq r}/K_{>r}$ is a 
bialgebra in $\ydh$. Since $\NA (V)$ is $\N _0$-graded and connected and $\Delta $ is $\N_0$-graded, $K_{\geq r}$ and $K_{>r}$ are also $\N _0$-graded. Therefore $K_{\geq r}/K_{>r}$ is $\N_0$-graded and connected, so it is a Hopf algebra in $\ydh$ by \cite[5.2.10]{Mo}.
\end{proof}

\section{Rank 2}

This Section contains a proof of the following result:

\begin{theorem}\label{th:infGK}
Let $V$ be a braided vector space of diagonal type and dimension $2$ such that  $\GK \NA(V) < \infty$. Then its generalized root system is finite.
\end{theorem}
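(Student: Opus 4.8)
The plan is to reduce the rank-$2$ statement to the classification in \cite{H-classif} by showing that any connected diagonal braiding of rank $2$ with $\GK \NA(V)<\infty$ must appear in the known list of finite-root-system cases. Following the trichotomy introduced in the excerpt, I would split into the three classes of braidings: generic, semigeneric, and torsion class. The generic case is settled by Remark \ref{rem:nichols-diagonal-finite-gkd}\ref{item:rosso-aa}, and the semigeneric case of rank $2$ is exactly Corollary \ref{coro:infGK-semigeneric}. So the real content is the \emph{torsion class} case, where $q_{11},q_{22},\qt_{12}\in\G_\infty$. By Remark \ref{rem:nichols-diagonal-finite-gkd}\ref{item:torsion-X-finite} the set $\X$ of reflections is automatically finite in this case, but finiteness of $\X$ alone is not the same as finiteness of the generalized root system in the sense required, so one must still bound the root system itself.

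The key step, then, is to analyze the torsion-class case directly. Here all the relevant parameters are roots of unity, so I would set $q_{11}\in\G'_N$, $q_{22}\in\G'_M$, and $\qt_{12}\in\G'_L$, and use the reflection machinery of \S\ref{subsection:diagonal-type}. By Remark \ref{rem:all-reflections-gkd}, the hypothesis $\GK\NA(V)<\infty$ guarantees that $V$ admits all reflections, and by Theorem \ref{th:GK-reflections} every braiding $U\in\X$ again has $\GK\NA(U)<\infty$. The strategy is to run the Weyl groupoid / reflection argument and show that admitting all reflections with the finiteness constraint from Lemma \ref{lemmata:rosso}\ref{item:diagonal-infiniteGK} at every vertex of every $U\in\X$ forces $V$ into the finite list. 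Concretely, I expect to use Lemma \ref{lemmata:rosso}\ref{item:diagonal-infiniteGK}: for each $i$ one needs $(k)_{q_{ii}}!\mu_k=0$ for some $k$, which for each $U\in\X$ imposes that the Cartan entries $c_{ij}$ defined in \eqref{eq:defcij} are finite and that the off-diagonal data is constrained. Cataloguing the finitely many possibilities produced by repeatedly reflecting and imposing this vanishing should pin down the generalized Dynkin diagrams exactly.

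I anticipate the main obstacle to be the bookkeeping in the torsion-class analysis: one must show that the finiteness condition, propagated through \emph{all} reflections, is not merely necessary at the starting braiding but cuts the parameter space down to the finitely many diagrams in \cite[Table 1]{H-classif}. The subtlety is that $\X$ being finite does not immediately yield that the roots $(\deg\ell)_{\ell\in L}$ for each $U\in\X$ form a finite generalized root system in the technical sense; one has to verify that the reflection-closed family of braidings matches a Weyl-groupoid-finite datum, which is precisely what the classification characterizes. The careful point will be to rule out the compactly hyperbolic and other indefinite Cartan-type possibilities at rank $2$ — here Lemma \ref{le:dimindefCartan} and Lemma \ref{le:indefinite} handle the Cartan sub-cases by producing a root $\gamma$ with $q_{\gamma,\gamma}=1$ and an infinite Weyl orbit, forcing $\GK=\infty$ via Lemma \ref{lem:infinite-generators-pbw}. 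Assembling these pieces — generic via \ref{item:rosso-aa}, semigeneric via Corollary \ref{coro:infGK-semigeneric}, and torsion class via the reflection-plus-vanishing analysis together with the indefinite-Cartan exclusions — completes the reduction to the finite classification list.
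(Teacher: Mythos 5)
Your reduction of the generic and semigeneric cases is correct and matches the paper (Remark \ref{rem:nichols-diagonal-finite-gkd}\eqref{item:rosso-aa} and Corollary \ref{coro:infGK-semigeneric} respectively), but your plan for the torsion class --- which is the entire substance of the theorem --- has a genuine gap. In the torsion class, the constraint you propose to exploit is vacuous: since every $q_{ii}$ is a root of unity, $(n+1)_{q_{ii}}=0$ for $n=\ord q_{ii}-1$, so \emph{every} torsion-class braiding automatically admits all reflections, and $\X$ is automatically finite by Remark \ref{rem:nichols-diagonal-finite-gkd}\eqref{item:torsion-X-finite}. Thus ``propagating the finiteness condition from Lemma \ref{lemmata:rosso}\eqref{item:diagonal-infiniteGK} through all reflections'' imposes no restriction at all: there are infinitely many torsion-class diagrams (e.g., all rank-2 Cartan-type braidings at roots of unity with indefinite Cartan matrix) that pass this test yet have infinite root systems. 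No amount of ``cataloguing'' the reflection data can pin these down; one must prove directly that they have infinite GK-dimension, and that is where all the work lies. Relatedly, you misread Lemma \ref{le:dimindefCartan}: it \emph{assumes} the existence of a root $\gamma$ with $q_{\gamma,\gamma}=1$ rather than producing one, and producing such roots at roots of unity is precisely the hard analytic step.

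The paper closes this gap with tools your proposal does not anticipate. First, the braided coideal subalgebra quotients $K_{\geq r}/K_{>r}$ of \S 3 are used to manufacture new primitive elements: $y_{m+1}$ and $\wt_m$ (see \eqref{eq:def-w-tilde}) span a rank-2 diagonal subspace of $K_{\ge m+1}/K_{>m+1}$ whose braiding matrix has the special form of Corollary \ref{cor:crucialGK1}, so $\wt_m\neq 0$ forces $\GK\NA(V)=\infty$ (Lemma \ref{le:technical-0}). The resulting vanishing conditions $\wt_m=0$ translate into explicit polynomial equations on $(q_{11},\qt_{12},q_{22})$ (Lemma \ref{le:wm}\eqref{item:wm-4}, Remark \ref{rem:wm-bis}, Lemmas \ref{lem:conditions-w0=0} and \ref{lem:conditions-w1=0}). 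Second, when those equations are consistent, the paper shows that certain powers $y_n^N$ survive by proving the scalars $d_t$ of \eqref{eq:dt} are nonzero (Proposition \ref{prop:dt-yn}), whence $N\beta_n$ is a root with $q_{N\beta_n,N\beta_n}=1$ (Lemma \ref{le:yNneq0isroot}) and Proposition \ref{pro:rootweight1} gives $\GK=\infty$. These two mechanisms, organized as a case analysis on the Cartan entries $(a^V_{12},a^V_{21})$ --- first excluding $a_{12},a_{21}\le-3$ (Lemma \ref{le:aij-ge-3}), then matching the survivors with \cite[Table 1]{H-classif} --- constitute the actual proof; your proposal contains no substitute for either mechanism.
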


Let $Z$ be a vector space and $(z_i)_{i\in I}$ a family of vectors in $Z$. Then $\langle z_i: i\in I \rangle$ denotes the subspace of $Z$ generated by the $z_i$'s.

\subsection{Lemmas for $\theta = 2$}\label{subsection:rank-2}

We establish some properties needed in the proof of Theorem~\ref{th:infGK}.
We keep the notation in \S \ref{subsection:diagonal-type};
we assume that $\theta = 2$ and set as above $y_k = (\ad x_1)^k x_2 \in \NA (V)$.
We also set:
\begin{align}\label{eq:def-pn}
\beta _m &=m\alpha_1+\alpha_2, &
p_m&=q_{\beta_{m} \, \beta_{m}} =q_{11}^{m^2}q_{12}^mq_{21}^mq_{22}, &
m&\in \N_0.
\end{align}

\subsubsection{On the powers of the $y_n$'s}\label{subsubsec:powers-yn}

\begin{lemma}\label{lem:derivation-auxiliar}
Let $r,s,t\in\N_0$, $r\le s$. Then
\begin{align*}
\partial _1^{t}\partial _2(y_ry_s) &=
\begin{cases}
0, & s<t; \\
\mu_s(s)_{q_{11}} (s-1)_{q_{11}} \dots (s-t+1)_{q_{11}} \, y_rx_1^{s-t}, & r<t\le s.
\end{cases}
\end{align*}
\end{lemma}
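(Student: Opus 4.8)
The plan is to reduce the statement to two elementary one-variable computations and then split according to the two stated ranges of $t$, the whole point being that $\partial_1$ annihilates every $y_k$.

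First I would record the twist factors coming from the skew-derivation rule. Since $y_k$ has degree $\beta_k=k\alpha_1+\alpha_2$, the action $\alpha_i\cdot y=q_{\alpha_i,\deg y}\,y$ gives $\alpha_1\cdot y_s=q_{11}^{s}q_{12}\,y_s$ and $\alpha_2\cdot y_s=q_{21}^{s}q_{22}\,y_s$. A short induction using $\partial_1(x_1)=1$ together with $\alpha_1\cdot x_1^{n-1}=q_{11}^{n-1}x_1^{n-1}$ yields $\partial_1(x_1^{n})=(n)_{q_{11}}x_1^{n-1}$. Next I would compute, from the skew-derivation rule and the values $\partial_2(y_k)=\mu_k x_1^{k}$ of \eqref{eq:derivations-yk},
\[
\partial_2(y_ry_s)=y_r\partial_2(y_s)+\partial_2(y_r)(\alpha_2\cdot y_s)=\mu_s\,y_rx_1^{s}+q_{21}^{s}q_{22}\,\mu_r\,x_1^{r}y_s .
\]

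The crux is then to understand how $\partial_1^{t}$ acts on each summand, and here the decisive fact is $\partial_1(y_k)=0$ for all $k$. For the first summand the Leibniz rule gives $\partial_1(y_rx_1^{s})=y_r\partial_1(x_1^{s})=(s)_{q_{11}}\,y_rx_1^{s-1}$, since $\partial_1(y_r)=0$; iterating $t$ times produces $\partial_1^{t}(y_rx_1^{s})=(s)_{q_{11}}(s-1)_{q_{11}}\cdots(s-t+1)_{q_{11}}\,y_rx_1^{s-t}$ when $t\le s$, and $0$ when $t>s$, because once the power of $x_1$ is exhausted the next factor applied is $\partial_1(y_r)=0$. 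For the second summand $\partial_1(x_1^{r}y_s)=(r)_{q_{11}}q_{11}^{s}q_{12}\,x_1^{r-1}y_s$, again using $\partial_1(y_s)=0$, and the same reasoning shows $\partial_1^{t}(x_1^{r}y_s)=0$ as soon as $t>r$.

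Finally I would assemble the two cases. If $s<t$, then $t>s\ge r$, so both summands are killed and $\partial_1^{t}\partial_2(y_ry_s)=0$. If $r<t\le s$, then the second summand vanishes (since $t>r$) while the first survives, giving exactly $\mu_s(s)_{q_{11}}(s-1)_{q_{11}}\cdots(s-t+1)_{q_{11}}\,y_rx_1^{s-t}$, as claimed; note that the regime $t\le r$ is simply not covered by the statement. The only mild obstacle is keeping track of the twist factors $q_{\alpha_i,\beta}$ introduced by the skew-derivation rule; but these never enter the final surviving term, precisely because that term originates from $\partial_1^{t}(y_rx_1^{s})$, where the annihilation $\partial_1(y_r)=0$ forbids any twist from appearing.
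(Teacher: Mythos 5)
Your proof is correct and follows essentially the same route as the paper's: expand $\partial_2(y_ry_s)$ via the skew-derivation rule using \eqref{eq:derivations-yk}, then apply $\partial_1^t$ termwise, exploiting $\partial_1(y_k)=0$ and induction on $t$ to kill the $x_1^r y_s$ summand (for $t>r$) and to extract the factors $(s)_{q_{11}}\cdots(s-t+1)_{q_{11}}$ from $y_rx_1^s$. The only difference is that you spell out the twist factors and the identity $\partial_1(x_1^n)=(n)_{q_{11}}x_1^{n-1}$ explicitly, which the paper leaves implicit.
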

\pf
By \eqref{eq:derivations-yk} we have that
\begin{align}\label{eq:deriv-ysyr}
\partial_2(y_ry_s)&=\mu _r q_{21}^sq_{22} x_1^ry_s+\mu _sy_rx_1^s &
\text{for all }&r,s\in \N_0.
\end{align} 
Using \eqref{eq:derivations-yk} again, if $t>s$, then $\partial _1^{t}\partial _2(y_ry_s)=0$, and if 
$r<t\le s$,
\begin{align*}
\partial _1^{t}\partial _2(y_ry_s) &=\partial _1^{t}(\mu _sy_rx_1^s) 
=\mu_s(s)_{q_{11}} (s-1)_{q_{11}} \dots (s-t+1)_{q_{11}} \, y_rx_1^{s-t}
\end{align*}
by induction on $t$.
\epf

\begin{lemma} \label{le:yy1}
Let $l\in \N _0$. Assume that
\begin{align*}
x_1^{2l+1}&\ne 0,& x_2^2&\ne 0,& y_{l+1}&\ne 0,&
y_{l+1}y_l &\in \langle  y_ry_s: \,0\le r\le s\rangle.
\end{align*}
Then  $q_{11}^{l(l+1)}(\qt_{12})^{l+1}q_{22}=1$ and
\begin{align} \label{eq:yy}
y_{l+1}y_l=q_{11}^{l(l+1)}q_{12}^{l+1}q_{21}^lq_{22}y_ly_{l+1}.
\end{align}
\end{lemma}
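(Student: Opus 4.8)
The plan is to pin down the coefficients in the expansion of $y_{l+1}y_l$ provided by the hypothesis, using the skew-derivations $\partial_1,\partial_2$. Since $\deg y_k=k\alpha_1+\alpha_2$, the element $y_{l+1}y_l$ is homogeneous of degree $(2l+1)\alpha_1+2\alpha_2$, so the assumption $y_{l+1}y_l\in\langle y_ry_s:0\le r\le s\rangle$ forces an expansion
\begin{align}\label{eq:plan-expansion}
y_{l+1}y_l=\sum_{r=0}^{l}c_r\,y_ry_{2l+1-r},\qquad c_r\in\ku ,
\end{align}
and we may assume $c_r=0$ whenever $y_ry_{2l+1-r}=0$. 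First I would record what the four hypotheses buy us: from $x_1^{2l+1}\neq 0$ we get $(n)_{q_{11}}\neq 0$ for all $n\le 2l+1$, hence $(n)_{q_{11}}^!\neq 0$ in that range; from $y_{l+1}\neq 0$ and Lemma~\ref{lemmata:rosso}\ref{item:diagonal-relations} we get $(l+1)_{q_{11}}^!\mu_{l+1}\neq 0$, whence also $\mu_l\neq 0$, $(l)_{q_{11}}^!\neq 0$ and $y_l\neq 0$; and from $x_2^2\neq 0$ we get $q_{22}\neq -1$, i.e. $1+q_{22}\neq 0$ (as $q_{22}=-1$ would force $x_2^2=0$). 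Finally, by Lemma~\ref{lemmata:rosso}\ref{item:diagonal-relations}, $y_m=0$ iff $\mu_m=0$ for every $m\le 2l+1$.

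Next I would extract the $c_r$ by applying $\partial_1^{\,2l+1-j}\partial_2$ to \eqref{eq:plan-expansion} for $j=0,1,\dots,l$. For the right-hand side Lemma~\ref{lem:derivation-auxiliar} applies (each factor has $r\le 2l+1-r$) and yields, for the term of index $r$, the value $0$ when $r>j$ and $\mu_{2l+1-r}(2l+1-r)_{q_{11}}\cdots(j+1-r)_{q_{11}}\,y_rx_1^{\,j-r}$ when $r\le j$; in particular the $r=j$ summand is $\mu_{2l+1-j}(2l+1-j)_{q_{11}}^!\,y_j$. For the left-hand side $\partial_1^{\,2l+1-j}\partial_2(y_{l+1}y_l)$ I would compute directly from \eqref{eq:deriv-ysyr}, since Lemma~\ref{lem:derivation-auxiliar} is stated for $r\le s$ while here the larger index comes first: applying $\partial_1^{t}$ to $\mu_{l+1}q_{21}^lq_{22}x_1^{l+1}y_l+\mu_ly_{l+1}x_1^l$ shows that this vanishes for $t=2l+1-j$ with $j<l$, and equals $\mu_{l+1}q_{21}^lq_{22}(l+1)_{q_{11}}^!q_{11}^{l(l+1)}q_{12}^{l+1}\,y_l$ when $j=l$ (i.e. $t=l+1$). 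Running $j$ from $0$ upward and assuming inductively $c_0=\dots=c_{j-1}=0$, the only surviving right-hand term for $j<l$ is $c_j\mu_{2l+1-j}(2l+1-j)_{q_{11}}^!y_j$; since $(2l+1-j)_{q_{11}}^!\neq 0$, either $\mu_{2l+1-j}y_j\neq 0$, forcing $c_j=0$, or else $\mu_{2l+1-j}y_j=0$, in which case $y_jy_{2l+1-j}=0$ and we already took $c_j=0$. Thus $c_r=0$ for $r<l$, and at $j=l$ the nonzero left-hand side gives
\begin{align*}
c_l=q_{11}^{l(l+1)}q_{12}^{l+1}q_{21}^lq_{22},
\end{align*}
which is precisely \eqref{eq:yy}.

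Finally I would obtain the scalar identity by applying $\partial_2$ twice to the now-established relation $y_{l+1}y_l=c_l\,y_ly_{l+1}$. A direct computation from \eqref{eq:deriv-ysyr} together with $\partial_2(x_1)=0$ gives $\partial_2^2(y_ry_s)=\mu_r\mu_s q_{21}^{s}(1+q_{22})x_1^{r+s}$, so the two sides yield
\begin{align*}
\mu_{l+1}\mu_l q_{21}^{l}(1+q_{22})x_1^{2l+1}=c_l\,\mu_l\mu_{l+1}q_{21}^{l+1}(1+q_{22})x_1^{2l+1}.
\end{align*}
Here $x_1^{2l+1}\neq0$, $\mu_l,\mu_{l+1}\neq0$ and $1+q_{22}\neq0$, so cancelling gives $c_lq_{21}=1$. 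Combined with the value of $c_l$ this yields $q_{11}^{l(l+1)}q_{12}^{l+1}q_{21}^{l+1}q_{22}=1$, that is $q_{11}^{l(l+1)}\qt_{12}^{\,l+1}q_{22}=1$, as claimed. I expect the main obstacle to be the left-hand derivative computation and the attendant bookkeeping: Lemma~\ref{lem:derivation-auxiliar} does not cover $y_{l+1}y_l$ directly because its larger index sits on the left, so that case must be redone by hand, and one must track carefully which products $y_ry_{2l+1-r}$ are forced to vanish (via Lemma~\ref{lemmata:rosso}\ref{item:diagonal-relations}) for the isolating induction to be valid. The one genuinely non-routine point is recognizing that the hypothesis $x_2^2\neq0$ is exactly the nonvanishing $1+q_{22}\neq0$ that makes the $\partial_2^2$ step informative and hence produces the scalar constraint.
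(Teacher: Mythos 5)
Your proposal is correct and follows essentially the same route as the paper's proof: restrict to the homogeneous component $r+s=2l+1$, kill the cross terms $y_ry_s$ with $r<l$ using operators $\partial_1^{t}\partial_2$ together with Lemma~\ref{lem:derivation-auxiliar}, compute the coefficient of $y_ly_{l+1}$ by applying $\partial_1^{l+1}\partial_2$, and finally obtain the scalar identity from $\partial_2^2$ and $q_{22}\neq-1$. The only (cosmetic) difference is that the paper eliminates all coefficients $c_r$, $r<l$, with a single application of $\partial_1^{l+2}\partial_2$, relying on the linear independence of the resulting elements $y_rx_1^{l-1-r}$, whereas your descending sequence of operators $\partial_1^{2l+1-j}\partial_2$ with induction on $j$ isolates one coefficient at a time and needs only $y_j\neq 0$ at each step.
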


\begin{proof} The last assumption of the lemma implies that
$$y_{l+1}y_l\in \langle  y_ry_s: \,0\le r\le s, \,\, r+s=2l+1 \rangle.$$
Using Lemma \ref{lem:derivation-auxiliar} we have that
\begin{align*}
\partial _1^{l+2}\partial _2(y_{l+1}y_l)&=0, &
\partial _1^{l+2}\partial _2(y_ry_s) &\in \Bbbk y_rx_1^{s-l-2}-0,
\end{align*}
for all $r,s\in \N_0$ with $r+s=2l+1$, $s\ge l+2$ such that $y_s\neq 0$. Thus there exists
$\lambda \in \ku $ such that
$y_{l+1}y_l=\lambda y_ly_{l+1}$. Now we apply $\partial _1^{l+1}\partial _2$ to this
equation. Using that
$\partial _1^{l+1}(x_1^l)=0$, $\partial _1^{l+1}(x_1^{l+1})=(l+1)_{q_{11}}^!\ne 0$ we obtain
$$ \mu _{l+1}q_{21}^lq_{22}(q_{11}^lq_{12})^{l+1}y_l=\lambda \mu
_{l+1}y_l.$$
Thus $\lambda =q_{11}^{l(l+1)}q_{12}^{l+1}q_{21}^lq_{22}$, since $y_{l+1}\ne 0$.
Finally we apply $\partial _2^2$ to \eqref{eq:yy}. Since $\mu _{l+1}\ne 0$ and $x_1^{2l+1}\neq 0$, we obtain that
\begin{align*}
q_{21}^lq_{22}+q_{21}^l=q_{11}^{l(l+1)}q_{12}^{l+1}q_{21}^lq_{22}(q_{21}^{l+1}q_{22}+q_{21}^{l+1}).
\end{align*}
As $x_2^2\ne 0$, we have that $q_{22}\ne -1$.
Hence $q_{11}^{l(l+1)}(\qt_{12})^{l+1}q_{22}=1$.
\end{proof}

\begin{lemma} \label{le:yy2}
Let $l\in \N $. Assume that
$y_l^2\in \langle y_ry_s: \,0\le r<s \rangle$ and that
$y_l\ne 0$. Then $y_{l+1}=0$ and $q_{11}^{l^2}(\qt_{12})^lq_{22}=-1$.
\end{lemma}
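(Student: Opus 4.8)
The plan is to prove first that the hypothesis forces $y_l^2=0$, and then to read off \emph{both} conclusions from the single relation $\partial_2(y_l^2)=0$. Since $y_l$ is $\Z^2$-homogeneous of degree $\beta_l=l\alpha_1+\alpha_2$ and $y_ry_s$ has degree $(r+s)\alpha_1+2\alpha_2$, comparing the degree-$2\beta_l$ components turns the hypothesis into
\begin{align}\label{eq:plan-A}
y_l^2=\sum_{r=0}^{l-1}\lambda_r\,y_ry_{2l-r},\qquad \lambda_r\in\ku .
\end{align}
I record at the outset that $y_l\neq0$ gives $(l)_{q_{11}}!\,\mu_l\neq0$ by Lemma~\ref{lemmata:rosso}\ref{item:diagonal-relations}, hence $(j)_{q_{11}}\neq0$ for $j\le l$, $\mu_l\neq0$, and $y_r\neq0$ for all $r\le l$.

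To prove $y_l^2=0$ I would apply $\partial_1^{l+1}\partial_2$ to \eqref{eq:plan-A}. By Lemma~\ref{lem:derivation-auxiliar} the left-hand side dies (the relevant exponent $t=l+1$ exceeds $s=l$), while for each $r\le l-1$ one has $r<l+1\le 2l-r$, so the lemma yields
\begin{align*}
0=\sum_{r=0}^{l-1}\lambda_r\,\mu_{2l-r}\Big(\prod_{j=l-r}^{2l-r}(j)_{q_{11}}\Big)\,y_rx_1^{\,l-1-r}.
\end{align*}
All the $y_rx_1^{\,l-1-r}$ have degree $\beta_{l-1}$, and they are linearly independent: one peels them off by the number of removable $x_1$'s, applying $\partial_1^{l-1},\partial_1^{l-2},\dots$ in turn (using $\partial_1(y_r)=0$ and $(j)_{q_{11}}!\neq0$ for $j\le l-1$, so the successive leading coefficients are nonzero). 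Thus $\lambda_r\,\mu_{2l-r}\prod_{j=l-r}^{2l-r}(j)_{q_{11}}=0$ for each $r$. If the coefficient vanishes, then $(2l-r)_{q_{11}}!\,\mu_{2l-r}=0$ and so $y_{2l-r}=0$ by Lemma~\ref{lemmata:rosso}\ref{item:diagonal-relations}; otherwise $\lambda_r=0$. In either case $\lambda_r\,y_ry_{2l-r}=0$, and therefore $y_l^2=0$.

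From $y_l^2=0$ and \eqref{eq:deriv-ysyr} with $r=s=l$ I get $\mu_l(q_{21}^lq_{22}\,x_1^ly_l+y_lx_1^l)=0$, that is, since $\mu_l\neq0$,
\begin{align}\label{eq:plan-B}
y_lx_1^l=-q_{21}^lq_{22}\,x_1^ly_l .
\end{align}
Applying $\partial_1^{l-1}$ to \eqref{eq:plan-B}, with $\partial_1^{l-1}(y_lx_1^l)=(l)_{q_{11}}!\,y_lx_1$ and $\partial_1^{l-1}(x_1^ly_l)=q_{11}^{l(l-1)}q_{12}^{l-1}(l)_{q_{11}}!\,x_1y_l$ (both obtained by iterating $\partial_1(x_1^my_l)=(m)_{q_{11}}q_{11}^lq_{12}\,x_1^{m-1}y_l$ and $\partial_1(y_lx_1^m)=(m)_{q_{11}}\,y_lx_1^{m-1}$), and cancelling $(l)_{q_{11}}!$, I obtain $y_lx_1=-q_{21}^lq_{22}q_{11}^{l(l-1)}q_{12}^{l-1}\,x_1y_l$. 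Substituting $x_1y_l=y_{l+1}+q_{11}^lq_{12}\,y_lx_1$ (the definition of $y_{l+1}=\ad_cx_1(y_l)$) and collecting the $y_lx_1$ terms gives, using $p_l=q_{11}^{l^2}q_{12}^lq_{21}^lq_{22}$ from \eqref{eq:def-pn},
\begin{align}\label{eq:plan-dagger}
(1+p_l)\,y_lx_1=-q_{21}^lq_{22}q_{11}^{l(l-1)}q_{12}^{l-1}\,y_{l+1}.
\end{align}
Applying $\partial_1$ to \eqref{eq:plan-dagger} and using $\partial_1(y_lx_1)=y_l$, $\partial_1(y_{l+1})=0$ yields $(1+p_l)y_l=0$; as $y_l\neq0$ this gives $p_l=q_{11}^{l^2}(\qt_{12})^lq_{22}=-1$. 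Feeding $p_l=-1$ back into \eqref{eq:plan-dagger}, the left-hand side vanishes while the scalar on the right is a nonzero product of $q$'s, whence $y_{l+1}=0$, as claimed.

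The derivation bookkeeping here is routine, being governed throughout by \eqref{eq:derivations-yk}, \eqref{eq:deriv-ysyr} and Lemma~\ref{lem:derivation-auxiliar}; the genuinely load-bearing point — the step I would take the most care over — is the linear independence of the degree-$\beta_{l-1}$ family $\{y_rx_1^{\,l-1-r}\}_{0\le r\le l-1}$ used to pass from \eqref{eq:plan-A} to $y_l^2=0$, where the hypothesis $y_l\neq0$ (equivalently $(l)_{q_{11}}!\neq0$) is exactly what guarantees the nonvanishing of the leading coefficients. I find it notable that, once $y_l^2=0$ is secured, the remaining extraction of both $p_l=-1$ and $y_{l+1}=0$ is a short and uniform computation requiring no case distinctions on $\ord q_{11}$ or on $q_{22}$.
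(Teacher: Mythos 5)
Your proposal is correct and takes essentially the same route as the paper: both arguments force $y_l^2=0$ by applying $\partial_1^{l+1}\partial_2$ to the homogeneous component of the hypothesis and invoking Lemma~\ref{lem:derivation-auxiliar}, and then extract both conclusions from $\partial_2(y_l^2)=0$ using $\partial_1$-derivations together with the relation $x_1y_l=y_{l+1}+q_{11}^lq_{12}\,y_lx_1$. The only (cosmetic) difference is the order of extraction: the paper deduces $y_{l+1}=0$ first, from the linear independence of $y_{l+1}$ and $y_lx_1$, and then gets $p_l=-1$, whereas you obtain $p_l=-1$ first by one further application of $\partial_1$ and then read off $y_{l+1}=0$ by back-substitution.
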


\begin{proof}
As $\toba(V)$ is $\N_0$-graded, the last assumption says that
$$y_l^2\in \langle y_ry_s: \,0\le r<s,\, \, r+s=2l\rangle.$$
Using Lemma \ref{lem:derivation-auxiliar} we have that
\begin{align*}
\partial _1^{l+1}\partial _2(y_l^2)&=0, &
\partial _1^{l+1}\partial _2(y_ry_s) &\in \Bbbk y_rx_1^{s-l-1}-0,
\end{align*}
for all $r,s\in \N_0$ with $r<l$, $r+s=2l$, $y_s\ne 0$. Thus $y_l^2=0$.
Since $l>0$,
$$ 0=\partial _1^{l-1}\partial _2(y_l^2)= a_1x_1y_l+a_2y_lx_1 $$
for some $a_1,a_2\in \ku - 0$.
Since $x_1y_l=y_{l+1}+q_{11}^lq_{12}y_lx_1$ and since $y_{l+1}$ and $y_lx_1$
are linearly independent whenever $y_{l+1}=0$, the latter equation implies that
$y_{l+1}=0$. Therefore $x_1y_l=q_{11}^lq_{12}y_lx_1$ and we have that
$$0=\partial_2(y_l^2)=\mu _r(q_{21}^lq_{22}(q_{11}^lq_{12})^l+1)y_lx_1^l. $$
This implies the last claim.
\end{proof}


We fix $n>0$ and set $q=p_n$, cf. \eqref{eq:def-pn}. Assume that $q\in \mathbb{G}'_N$ 
for some $N\ge 2$.
In the next few Lemmas we prepare a condition for $y_n^N$ being a root vector.
We start with some computations with $q$-numbers. Recall that
\begin{align*}
(t_1+t_2)_q&=(t_1)_q+q^{t_1}(t_2)_q, & \mbox{for all }t_1,t_2 &\in \N _0. 
\end{align*}
In particular, $(N)_q=0$ and $(N-t)_q=-q^{-t}(t)_q$ for all $0\le t\le N$.

\begin{lem} \label{le:sumprod}
Let $r\in \{0,1,\dots ,N-2\}$ and $t\in \N _0$.
Then
$$\sum _{l=0}^t q^l(l+1)_q(l+2)_q\cdots
(l+r)_q=\frac{(t+1)_q(t+2)_q\cdots (t+r+1)_q}{(r+1)_q}.$$
\end{lem}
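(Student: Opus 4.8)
The plan is to recognize the right-hand side as a discrete antiderivative of the summand and to prove the identity by telescoping, using only the addition rule $(t_1+t_2)_q=(t_1)_q+q^{t_1}(t_2)_q$. Throughout I write $(m)_q=\sum_{j=0}^{m-1}q^j$. Since $q\in\G'_N$ and $1\le r+1\le N-1$, we have $(r+1)_q\neq 0$, so the right-hand side is well defined; this is precisely where the hypothesis $r\le N-2$ enters.

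First I would set
\begin{align*}
R(t)&=\frac{(t+1)_q(t+2)_q\cdots (t+r+1)_q}{(r+1)_q}, & t&\ge -1,
\end{align*}
so that $R(t)$ is the claimed value of the sum, while $R(-1)=0$ because the product defining $R(-1)$ contains the factor $(0)_q=0$. The key computation is then to show that the $l$-th summand equals the forward difference $R(l)-R(l-1)$. Factoring out the common product $(l+1)_q\cdots (l+r)_q$ gives
\begin{align*}
R(l)-R(l-1)&=\frac{(l+1)_q\cdots (l+r)_q}{(r+1)_q}\bigl[(l+r+1)_q-(l)_q\bigr],
\end{align*}
and the addition formula $(l+r+1)_q=(l)_q+q^l(r+1)_q$ collapses the bracket to $q^l(r+1)_q$. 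Hence $R(l)-R(l-1)=q^l(l+1)_q\cdots (l+r)_q$, which is exactly the $l$-th term of the left-hand sum.

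With this in hand the identity is immediate, since the sum telescopes:
\begin{align*}
\sum_{l=0}^t q^l(l+1)_q\cdots (l+r)_q=\sum_{l=0}^t\bigl(R(l)-R(l-1)\bigr)=R(t)-R(-1)=R(t),
\end{align*}
which is the asserted expression. The case $r=0$ is consistent, as then the empty product equals $1$ and both sides reduce to $(t+1)_q$.

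There is no serious obstacle here; the only points requiring care are the well-definedness of the quotient, guaranteed by $r\le N-2$ so that $(r+1)_q\neq 0$, and the boundary value $R(-1)=0$, which makes the telescoping start cleanly from $0$. An essentially equivalent route would be induction on $t$, using the same difference identity as the inductive step and the trivial base case $t=0$.
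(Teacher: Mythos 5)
Your proof is correct, and it is essentially the same argument as the paper's: the paper proceeds by induction on $t$, and its inductive step is exactly your difference identity $R(l)-R(l-1)=q^l(l+1)_q\cdots(l+r)_q$ obtained from the addition rule $(l+r+1)_q=(l)_q+q^l(r+1)_q$. The telescoping packaging versus explicit induction is only a cosmetic difference, as you yourself note at the end.
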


\begin{proof}
Note that $(r+1)_q\ne 0$ since $1\le r+1<N$.
We proceed by induction on $t$. For $t=0$ the claim is trivial.
For any $t\ge 0$, for which the claim holds, we obtain that
\begin{align*}
&\sum _{l=0}^{t+1}q^l(l+1)_q(l+2)_q\cdots (l+r)_q\\
&=\sum _{l=0}^tq^l(l+1)_q(l+2)_q\cdots (l+r)_q
+q^{t+1}(t+2)_q\cdots (t+r+1)_q\\
&=\frac{(t+1)_q(t+2)_q\cdots (t+r+1)_q}{(r+1)_q}+q^{t+1}(t+2)_q(t+3)_q\cdots (t+r+1)_q\\
&=\frac{(t+1)_q+q^{t+1}(r+1)_q}{(r+1)_q}(t+2)_q(t+3)_q\cdots (t+r+1)_q\\
&=\frac{(t+2)_q(t+3)_q\cdots (t+r+1)_q(t+r+2)_q}{(r+1)_q}.
\end{align*}
This proves the claim.
\end{proof}

For any $t\in \{0,1,\dots ,N-2\}$ let
\begin{align}\label{eq:Y(n)}
Y(t)&=\sum _{j=0}^t (q_{11}^nq_{12})^{-j} \frac{(N-t-1+j)_q^!}{(j)_q^!} y_n^{t-j}y_{n+1}y_n^j.
\end{align}
In particular, $Y(0)=(N-1)_q^! y_{n+1}$.

\begin{lem} \label{le:der_unN}
$\partial_1^{n-1}\partial_2(y_n^N)=- \mu_n (n)_{q_{11}}^!(q_{11}^nq_{12})^{-1}Y(N-2)$.
\end{lem}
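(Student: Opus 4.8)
The plan is to compute $\partial_1^{n-1}\partial_2(y_n^N)$ directly, using the coproduct formula \eqref{eq:copr_um} to organize the calculation. The key observation is that $\partial_1^{n-1}\partial_2$ is a skew-derivation-type operator whose action on a product $y_n^N$ is governed by the comultiplication, so I would first understand how $\partial_2$ and then $\partial_1$ act iteratively on powers of $y_n$.

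First I would record the basic derivation data. We have $\partial_1(y_n)=0$ and $\partial_2(y_n)=\mu_n x_1^n$ from \eqref{eq:derivations-yk}, and $\partial_1(x_1)=1$, so $\partial_1^{n-1}(x_1^n)=(n)_{q_{11}}(n-1)_{q_{11}}\cdots(2)_{q_{11}} x_1 = \tfrac{(n)_{q_{11}}^!}{(1)_{q_{11}}}x_1$ up to the appropriate $q$-factor; this is where the factor $\mu_n(n)_{q_{11}}^!$ will originate. The strategy is to apply $\partial_2$ to $y_n^N$ using the Leibniz-type rule for $\partial_2$ across the $N$ factors, producing a sum over which factor is hit; then apply $\partial_1^{n-1}$ to the resulting terms. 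Because $\partial_1(y_n)=0$, the operator $\partial_1^{n-1}$ only sees the $x_1^n$ produced by $\partial_2$ and the grading-twist factors $(\alpha_1\cdot -)$ coming from the derivation property.

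The core computation is bookkeeping the scalars. When $\partial_2$ hits the $j$-th copy of $y_n$ (counting from one side), it leaves behind $y_n^{t-j}\,(\partial_2 y_n)\,y_n^{j}$-type terms, but since $\partial_2 y_n = \mu_n x_1^n$ and $x_1$ must be commuted past the remaining $y_n$'s via the braiding, each such move contributes a power of $q_{11}^n q_{12}$; this explains the factor $(q_{11}^n q_{12})^{-j}$ and the shifted $q$-factorials $\tfrac{(N-t-1+j)_q^!}{(j)_q^!}$ appearing in $Y(t)$ from \eqref{eq:Y(n)}. I expect the expression $Y(N-2)$ to emerge precisely because, after applying $\partial_2$ and then $\partial_1^{n-1}$, the surviving terms reassemble into the telescoping sum defining $Y(N-2)$, with $q=p_n$ being the self-braiding scalar $q_{\beta_n,\beta_n}$ that controls how $x_1^n$ interacts with $y_n$. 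The identity from Lemma~\ref{le:sumprod} is the tool that will collapse the intermediate sums into the clean $q$-factorial coefficients.

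The main obstacle will be the careful tracking of the braiding scalars as $x_1^n$ (produced by $\partial_2$) is moved past the adjacent powers of $y_n$, and correctly accounting for the order in which $\partial_1^{n-1}$ distributes over the resulting monomials. Each commutation past a $y_n$ (which has degree $\beta_n=n\alpha_1+\alpha_2$) produces a factor $q_{\alpha_1,\beta_n}^{\pm}$ or its inverse, and these must be consolidated into the compact powers of $q_{11}^n q_{12}$. I would manage this by working one $\partial_2$-term at a time, using the grading to fix all the exponents, and verifying the overall scalar $-\mu_n (n)_{q_{11}}^!(q_{11}^n q_{12})^{-1}$ by checking the leading term $j=0$ against $Y(0)=(N-1)_q^!\,y_{n+1}$ and $\partial_1^{n-1}\partial_2(y_n^N)$ restricted to the component proportional to $y_{n+1}$.
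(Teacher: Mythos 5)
Your skeleton is the same as the paper's: apply $\partial_2$ by the Leibniz rule across the $N$ factors, then let $\partial_1^{n-1}$ act (it only sees the $x_1^n$'s since $\partial_1(y_n)=0$), then reorder and collect scalars. But there is a genuine gap at the reordering step. You describe moving $x_1$ past the powers of $y_n$ as contributing only braiding scalars (``each such move contributes a power of $q_{11}^nq_{12}$'', ``produces a factor $q_{\alpha_1,\beta_n}^{\pm}$''). In the Nichols algebra this is false: $x_1$ and $y_n$ do not $q$-commute, and the whole content of the lemma lives in the correction term, namely $y_{n+1}=x_1y_n-q_{11}^nq_{12}y_nx_1$, which iterates to
\begin{align*}
x_1y_n^l=q_{11}^{nl}q_{12}^ly_n^lx_1
+\sum_{j=0}^{l-1}(q_{11}^nq_{12})^{l-1-j}y_n^{l-1-j}y_{n+1}y_n^j,
\end{align*}
i.e. \eqref{eq:x1un} in the paper. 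Under your literal mechanism (scalar commutation only), all terms collapse onto $y_n^{N-1}x_1$ with total coefficient $\mu_n(n)_{q_{11}}^!\sum_{l=0}^{N-1}p_n^{\,l}=\mu_n(n)_{q_{11}}^!(N)_{p_n}=0$, so you would obtain $0$, not $-\mu_n(n)_{q_{11}}^!(q_{11}^nq_{12})^{-1}Y(N-2)$. The $y_{n+1}$'s in $Y(N-2)$ have no source in your account; they arise exclusively from the displayed $q$-commutator, while the purely scalar terms are exactly the ones that cancel because $q=p_n$ is a primitive $N$-th root of unity.

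Two smaller corrections. Once \eqref{eq:x1un} is substituted, the surviving double sum is handled by the elementary identities $\sum_{l=0}^{N-1}q^l=0$ and $\sum_{l=j+1}^{N-1}q^l=-(j+1)_q$; Lemma \ref{le:sumprod}, which you invoke as the collapsing tool, is not needed here (the paper uses it only for the next computation, Lemma \ref{le:der_Ut}). Also, your proposed sanity check conflates $Y(0)=(N-1)_q^!\,y_{n+1}$ with the $j=0$ term of $Y(N-2)$, which is $y_n^{N-2}y_{n+1}$ with coefficient $(1)_q^!=1$; these are different objects, so that verification as stated does not test the coefficient you want.
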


\begin{proof}
First we obtain that
\begin{align*}
\partial_2(y_n^N)=&\sum_{l=0}^{N-1}y_n^{N-1-l}\partial_2(y_n)\alpha_2\cdot
y_n^l
=\mu_n \sum_{l=0}^{N-1} ( q_{21}^n q_{22} )^l
y_n^{N-1-l} x_1^n y_n^l.
\end{align*}
Since $\partial_1(y_n)=0$, we have that
\begin{align*}
\partial_1^{n-1}\partial_2(y_n^N)=&
\mu_n (n)_{q_{11}}^!
\sum_{l=0}^{N-1}(q_{11}^{n(n-1)}q_{12}^{n-1}q_{21}^nq_{22})^ly_n^{N-1-l}x_1y_n^l.
\end{align*}
The equation $y_{n+1}=x_1y_n-q_{11}^nq_{12}y_nx_1$ implies that
\begin{align} \label{eq:x1un}
x_1y_n^l=q_{11}^{nl}q_{12}^ly_n^lx_1
+\sum_{j=0}^{l-1}(q_{11}^nq_{12})^{l-1-j}y_n^{l-1-j}y_{n+1}y_n^j.
\end{align}
Therefore
\begin{align*}
\partial_1^{n-1}\partial_2(y_n^N)=&
\mu_n (n)_{q_{11}}^!
\sum_{l=0}^{N-1}(q_{11}^{n(n-1)}q_{12}^{n-1}q_{21}^nq_{22})^ly_n^{N-1-l}\cdot\\
&\Big(q_{11}^{nl}q_{12}^ly_n^lx_1+
\sum_{j=0}^{l-1}(q_{11}^nq_{12})^{l-1-j}y_n^{l-1-j}y_{n+1}y_n^j\Big)\\
=&\mu_n (n)_{q_{11}}^!
\Big(\sum_{l=0}^{N-1}q^ly_n^{N-1}x_1\\
&+(q_{11}^nq_{12})^{-1}
\sum_{j=0}^{N-2}\Big(\sum_{l=j+1}^{N-1}q^l\Big)(q_{11}^nq_{12})^{-j}
y_n^{N-2-j}y_{n+1}y_n^j\Big).
\end{align*}
Hence the Lemma follows since
$\sum_{l=0}^{N-1}q^l=0$ and $\sum_{l=j+1}^{N-1}q^l=-(j+1)_q$.
\end{proof}

For any $1\le t\le N-1$ let
\begin{align} \label{eq:dt}
d_t=1-q^{t+1}q_{11}^{2n}\qt_{12}
+\frac{q^t(1-q_{11}^n\qt_{12})(n+1)_{q_{11}}}{(t)_q}.
\end{align}
Observe that $d_t$ depends on $n$.

\begin{lem} \label{le:der_Ut}
Let $t\in \{1,2,\dots ,N-1\}$. Then
\begin{align*}
\partial_1^n\partial_2 (Y(t))=
\mu_n(n)_{q_{11}}^!(q_{11}^nq_{12})^{-1}d_tY(t-1).
\end{align*}
\end{lem}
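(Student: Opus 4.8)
The plan is to apply the two skew-derivations to each monomial $y_n^{t-j}y_{n+1}y_n^j$ occurring in $Y(t)$ and reassemble the outcome. First observe that the order is forced: since $\partial_1(y_n)=\partial_1(y_{n+1})=0$ by \eqref{eq:derivations-yk}, the operator $\partial_1$ kills every word in $y_n$ and $y_{n+1}$, so $\partial_1^n(Y(t))=0$ and we must differentiate with $\partial_2$ first. The relevant data are $\partial_2(y_n)=\mu_nx_1^n$ and $\partial_2(y_{n+1})=\mu_{n+1}x_1^{n+1}$, the skew-Leibniz rule $\partial_i(uv)=u\partial_i(v)+\partial_i(u)(\alpha_i\cdot v)$ with $\alpha_i\cdot w=\bq(\alpha_i,\deg w)\,w$ on homogeneous $w$, and the commutation relation \eqref{eq:x1un}. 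A single preliminary computation handles the second derivation: if $A,B$ are words in $y_n,y_{n+1}$, then $\partial_1$ passes freely through $A$ (as $\partial_1(A)=0$) and acts on $x_1^mB$ by the $q$-Leibniz rule, whence $\partial_1^n(Ax_1^nB)=(n)_{q_{11}}^!\,\bq(\alpha_1,\deg B)^{n}AB$ and $\partial_1^n(Ax_1^{n+1}B)=(n+1)_{q_{11}}^!\,\bq(\alpha_1,\deg B)^{n}Ax_1B$.

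Next I would expand $\partial_2(y_n^{t-j}y_{n+1}y_n^j)$ into three families according to which factor is hit: the $y_n$'s of the left block, the central $y_{n+1}$, and the $y_n$'s of the right block, each term picking up a power of $q_{21}^{\bullet}q_{22}$ from the actions $\alpha_2\cdot(-)$. Applying $\partial_1^n$ and the preliminary computation, the two $y_n$-block families have their inserted $x_1^n$ fully consumed, yielding the factor $(n)_{q_{11}}^!$ and terms $y_n^{t-1-j'}y_{n+1}y_n^{j'}$ already in the span of $Y(t-1)$. The central family inserts $x_1^{n+1}$, so $\partial_1^n$ produces $(n+1)_{q_{11}}^!$ and leaves one $x_1$; rewriting the surviving $y_n^{t-j}x_1y_n^j$ through \eqref{eq:x1un} gives $q_{11}^{nj}q_{12}^jy_n^tx_1+\sum_i(q_{11}^nq_{12})^{j-1-i}y_n^{t-1-i}y_{n+1}y_n^i$, i.e.\ terms in the $y_n^ay_{n+1}y_n^b$ span together with one exceptional term $y_n^tx_1$.

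Finally I would collect coefficients and match $d_tY(t-1)$. The exceptional words $y_n^tx_1$ are independent of the $y_n^ay_{n+1}y_n^b$, so their total coefficient must vanish; and indeed, after factoring out the scalars it equals $\mu_{n+1}(n+1)_{q_{11}}^!\sum_{j=0}^tq^j(N-t-1+j)_q^!/(j)_q^!$, which by Lemma~\ref{le:sumprod} (the quotient being a product of $N-t-1$ consecutive $q$-integers) collapses to an expression containing the factor $(N)_q=0$, recalling $q=p_n\in\mathbb{G}_N'$. For the surviving terms, one checks that the total coefficient of each $y_n^{t-1-j}y_{n+1}y_n^j$ is $d_t$ times its coefficient in $Y(t-1)$, with $d_t$ independent of $j$: the two $y_n$-block families contribute $1-q^{t+1}q_{11}^{2n}\qt_{12}$, while the central family contributes $\frac{q^t(1-q_{11}^n\qt_{12})(n+1)_{q_{11}}}{(t)_q}$, where $1-q_{11}^n\qt_{12}=\mu_{n+1}/\mu_n$, the factor $(n+1)_{q_{11}}=(n+1)_{q_{11}}^!/(n)_{q_{11}}^!$ survives after pulling out the common $\mu_n(n)_{q_{11}}^!(q_{11}^nq_{12})^{-1}$, and the denominator $(t)_q$ is produced by the summation over $j$ via Lemma~\ref{le:sumprod}. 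The main obstacle is exactly this bookkeeping: tracking the braiding scalars $\bq(\alpha_i,-)$ through the re-indexings and verifying that, after the three families recombine, the $j$-dependence collapses so that a single scalar $d_t$ factors out with no residual PBW words.
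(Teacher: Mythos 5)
Your proposal is correct and follows essentially the same route as the paper's proof: apply $\partial_2$ first, split into the left-block, central, and right-block families, consume the inserted $x_1^n$ (resp.\ $x_1^{n+1}$) with $\partial_1^n$, rewrite the leftover $x_1$ via \eqref{eq:x1un}, kill the $y_n^tx_1$ terms with Lemma~\ref{le:sumprod} (the paper likewise gets $\prod_{i=1}^{N-t}(t+i)_q/(N-t)_q=0$ since the product contains $(N)_q$), and use Lemma~\ref{le:sumprod} again plus $(N-t)_q=-q^{-t}(t)_q$ to produce the $(t)_q$-denominator, with the block families giving $1-q^{t+1}q_{11}^{2n}\qt_{12}$ and the central family the remaining summand of $d_t$ — exactly as in the paper. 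The only cosmetic difference is your appeal to linear independence of $y_n^tx_1$, which is unnecessary (and what you actually do, namely verify directly that its coefficient vanishes, is the right and sufficient step).
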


\begin{proof}
Similarly to the calculation 
in Lemma~\ref{le:der_unN} we have that
\begin{align*}
\partial_1^n\partial_2(Y(t))=&\sum_{l=0}^t
(q_{11}^nq_{12})^{-l}\Big (\prod_{i=1}^{N-t-1}(l+i)_q\Big) \cdot\\
&\Big (\partial_1^n\partial_2(y_n)(t-l)_q q^{l+1}q_{11}^nq_{21}
y_n^{t-1-l}y_{n+1}y_n^l\\
&\phantom{\Big(}+ \partial_1^n\partial_2(y_n)(1-q_{11}^n\qt_{12})(n+1)_{q_{11}} q^l y_n^{t-l}
x_1 y_n^l\\
&\phantom{\Big(}+\partial_1^n\partial_2(y_n)(l)_q y_n^{t-l} y_{n+1}
y_n^{l-1}
\Big).
\end{align*}
Using \eqref{eq:x1un} this implies that
\begin{align*}
\partial_1^n&\partial_2 (Y(t))=
\partial_1^n\partial_2(y_n) \sum_{j=0}^{t-1}
(q_{11}^nq_{12})^{-j}\Big (\prod_{i=1}^{N-t-1}(j+i)_q\Big)
\cdot\\
&\quad (-q^{t-j})(N-t+j)_q q^{j+1}q_{11}^nq_{21} y_n^{t-1-j}y_{n+1}y_n^j\\
&+\partial_1^n\partial_2(y_n)(1-q_{11}^n\qt_{12})(n+1)_{q_{11}}
\sum_{l=0}^t (q_{11}^nq_{12})^{-l}
\Big (\prod_{i=1}^{N-t-1}(l+i)_q\Big)q^l\cdot \\
&\quad \Big(q_{11}^{nl}q_{12}^ly_n^t x_1
+\sum_{j=0}^{l-1}(q_{11}^nq_{12})^{l-1-j}y_n^{t-1-j}y_{n+1}y_n^j\Big)\\
&+\partial_1^n\partial_2(y_n)\sum_{j=0}^{t-1} (q_{11}^nq_{12})^{-j-1}
\Big(\prod_{i=1}^{N-t-1}(j+1+i)_q\Big)
(j+1)_q y_n^{t-1-j} y_{n+1} y_n^j.
\end{align*}
Lemma~\ref{le:sumprod} tells that
$$ \sum_{l=0}^tq^l\prod_{i=1}^{N-t-1}(l+i)_q=\prod_{i=1}^{N-t}(t+i)_q/(N-t)_q=0
$$
since $1<t<N$. Therefore the terms $y_n^tx_1$ disappear in the above expression
for $\partial_1^n\partial_2(Y(t))$.
Moreover,
\begin{align*}
\sum_{l=0}^t&\sum_{j=0}^{l-1}\Big(\prod_{i=1}^{N-t-1}(l+i)_q\Big)q^l
(q_{11}^nq_{12})^{-1-j}\\
=&\sum_{j=0}^{t-1}(q_{11}^nq_{12})^{-1-j}
\sum_{l=j+1}^tq^l\Big(\prod_{i=1}^{N-t-1}(l+i)_q\Big)\\
=&\sum_{j=0}^{t-1}(q_{11}^nq_{12})^{-1-j}
\Bigg(\sum_{l=0}^tq^l\Big(\prod_{i=1}^{N-t-1}(l+i)_q\Big)
-\sum_{l=0}^jq^l\Big(\prod_{i=1}^{N-t-1}(l+i)_q\Big)\Bigg)\\
=&\sum_{j=0}^{t-1}(q_{11}^nq_{12})^{-1-j}
\Big(\prod_{i=1}^{N-t}(t+i)_q-\prod_{i=1}^{N-t}(j+i)_q\Big)/(N-t)_q\\
=&-\sum_{j=0}^{t-1}(q_{11}^nq_{12})^{-1-j}
\Big(\prod_{i=1}^{N-t}(j+i)_q\Big)/(N-t)_q.
\end{align*}
Therefore,
\begin{align*}
\partial_1^n\partial_2(Y(t))=&
\partial_1^n\partial_2(y_n)(q_{11}^nq_{12})^{-1}
\sum_{j=0}^{t-1}(q_{11}^nq_{12})^{-j}
\Big(\prod_{i=1}^{N-t}(j+i)_q\Big)\cdot\\
&\Big(-q^{t+1}q_{11}^{2n}\qt_{12}
-\frac{(1-q_{11}^n\qt_{12})(n+1)_{q_{11}}}{(N-t)_q}+1\Big)y_n^{t-1-j}y_{n+1}y_n^j.
\end{align*}
Thus the claim follows from this equality and $(N-t)_q=-q^{-t}(t)_q$.
\end{proof}

\begin{pro}\label{prop:dt-yn}
Assume that $y_n^N=0$ and $y_{n+1}\ne 0$.
Then $d_t=0$ for some $1\le t\le N-2$, see \eqref{eq:dt}.
\end{pro}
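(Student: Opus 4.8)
The plan is to run the two differential identities already established, Lemma~\ref{le:der_unN} and Lemma~\ref{le:der_Ut}, as a machine that propagates vanishing downward along the chain $Y(N-2), Y(N-3),\dots, Y(0)$. In both identities the same scalar $c:=\mu_n (n)_{q_{11}}^!(q_{11}^nq_{12})^{-1}$ occurs as a common factor, so once I know $c\ne 0$ the entire question reduces to deciding which $d_t$ are allowed to vanish, and the conclusion will come from a descent by contradiction.

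First I would record the non-vanishing facts forced by the hypothesis $y_{n+1}\ne 0$. By Lemma~\ref{lemmata:rosso}\ref{item:diagonal-relations}, $y_{n+1}\ne 0$ is equivalent to $(n+1)_{q_{11}}^!\,\mu_{n+1}\ne 0$; reading off the factorizations $(n+1)_{q_{11}}^!=(n+1)_{q_{11}}(n)_{q_{11}}^!$ and $\mu_{n+1}=(1-q_{11}^n\qt_{12})\mu_n$, this yields $(n)_{q_{11}}^!\ne 0$ and $\mu_n\ne 0$, hence $c\ne 0$. Next, since $q=p_n\in\G'_N$, the $q$-integers $(j)_q$ are nonzero for $1\le j\le N-1$, so $(N-1)_q^!\ne 0$ and therefore $Y(0)=(N-1)_q^!\,y_{n+1}\ne 0$.

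With these in hand the deduction is short. Applying Lemma~\ref{le:der_unN} to the hypothesis $y_n^N=0$ gives $c\,Y(N-2)=0$, hence $Y(N-2)=0$. (If $N=2$ this already reads $Y(0)=0$, contradicting the previous paragraph, so the hypotheses in fact force $N\ge 3$ and the index range $\{1,\dots,N-2\}$ is nonempty.) Now suppose, towards a contradiction, that $d_t\ne 0$ for every $t\in\{1,\dots,N-2\}$. Applying $\partial_1^n\partial_2$ to $Y(N-2)=0$ and invoking Lemma~\ref{le:der_Ut} gives $c\,d_{N-2}\,Y(N-3)=0$, whence $Y(N-3)=0$; iterating the same step downward from $t=N-2$ to $t=1$ divides out the nonzero scalar $c\,d_t$ at each stage and produces $Y(t-1)=0$, ending with $Y(0)=0$. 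This contradicts $Y(0)\ne 0$, so $d_t=0$ for some $1\le t\le N-2$.

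The argument is essentially bookkeeping once the two derivation lemmas are available; I expect the only genuinely delicate point to be the non-vanishing of the scalars, and in particular the observation that $y_{n+1}\ne 0$ is strong enough to guarantee $\mu_n\ne 0$ and $(n)_{q_{11}}^!\ne 0$ simultaneously, so that the common factor $c$ can be cancelled at every stage of the descent and the vanishing of $Y(N-2)$ is transmitted all the way to $Y(0)$.
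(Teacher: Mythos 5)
Your proof is correct and takes essentially the same route as the paper's: apply Lemma~\ref{le:der_unN} to get $Y(N-2)=0$, then use Lemma~\ref{le:der_Ut} to propagate the vanishing down the chain $Y(N-2),\dots,Y(0)$ and contradict $Y(0)=(N-1)_q^!\,y_{n+1}\neq 0$. Your explicit check that the common scalar $\mu_n (n)_{q_{11}}^!(q_{11}^nq_{12})^{-1}$ is nonzero (which the paper leaves implicit) and your observation that the hypotheses force $N\ge 3$ are accurate refinements, not deviations.
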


\begin{proof}
Lemma~\ref{le:der_unN} implies that $Y(N-2)=0$.
Thus $$(\partial_1^n\partial_2)^{N-2}(Y(N-2))=0.$$
By Lemma \ref{le:der_Ut} either $d_{N-2}=0$ or else $Y(N-3)=0$. Recursively, if $Y(t)=0$, then either $d_{t}=0$ or else $Y(t-1)=0$, for each $1\le t\le N-2$.
Since $Y(0)=(N-1)_q^!y_{n+1}\ne 0$, necessarily $d_t=0$ for some $1\le t\le N-2$.
\end{proof}

\begin{lem}\label{le:yNneq0isroot}
If $y_n^N\ne 0$ then $N\beta _n$ is a root of $V$.
\end{lem}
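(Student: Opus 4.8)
The plan is to prove that $N\beta_n$ is a positive root by isolating the behaviour of $\NA(V)$ along the line of slope $n$, using the braided coideal subalgebra constructed above with $r=n$. First I would record that $y_n^N\neq 0$ forces $y_n\neq 0$, so that $\beta_n=\deg y_n$ is a positive root of $V$ with self-braiding $q_{\beta_n\beta_n}=p_n=q\in\G'_N$; thus $y_n$ may be taken as the PBW generator of degree $\beta_n$ (the Lyndon element $[x_1^nx_2]$), and its height is $N_{\beta_n}=\ord q=N$. Proving that $N\beta_n$ is a root then amounts to producing a PBW generator of $\NA(V)$ of degree $N\beta_n$.

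Next I would pass to $\mathcal{K}:=K_{\geq n}/K_{>n}$, which by the Proposition on braided coideal subalgebras is a connected $\N_0$-graded braided Hopf algebra in $\ydG$, whose nonzero components sit on the boundary line $a_1=na_2$, i.e. in degrees $m\beta_n$. One checks $y_n\in K_{\geq n}$ and, reading off \eqref{eq:copr_um}, that the class $\bar{y}_n$ is primitive in $\mathcal{K}$: every nontrivial term of $\Delta(y_n)$ has left tensorand $x_1^{n-i}$ with $i<n$, which lies in $B_{>n}$, hence in $K_{>n}$, and so dies in $\mathcal{K}\otimes\mathcal{K}$. Since $y_n^N$ is homogeneous of degree $N\beta_n$, which lies on the boundary line and is therefore not in $B_{>n}$, we have $y_n^N\notin K_{>n}$; thus $\bar{y}_n^N\neq 0$ in $\mathcal{K}$ precisely because $y_n^N\neq 0$. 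As $\bar{y}_n$ is primitive with self-braiding $q$, the braided binomial formula gives that $\bar{y}_n^N$ is again primitive, now with self-braiding $q^{N^2}=1$. A homogeneous primitive element of trivial self-braiding cannot be nilpotent (if $z^k=0$ with $k\ge 2$ minimal, then $\Delta(z^k)=\sum_j\binom kj z^j\otimes z^{k-j}=0$ forces the bidegree-separated term $k\,z\otimes z^{k-1}=0$, whence $z^{k-1}=0$, a contradiction in characteristic zero). Applying this to $z=\bar{y}_n^N$ yields $\bar{y}_n^{Nk}\neq 0$ for all $k$, and lifting back $y_n^{Nk}\neq 0$ in $\NA(V)$ for all $k\geq 0$: the element $y_n$ is not nilpotent, although its height in $\NA(V)$ is the finite number $N$.

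This last point is the crux. Since the restricted PBW basis only uses powers $y_n^m$ with $m<N$, the nonvanishing of every $y_n^{Nk}$ can be accounted for only by a PBW generator of $\NA(V)$ lying strictly above $y_n$ on the same line; the expected one, of degree $N\beta_n$ and self-braiding $q^{N^2}=1$ (hence of infinite height), is exactly the hyperletter obtained from $[x_1^nx_2]$ at its height in Kharchenko's sense. The main obstacle is to make this transfer rigorous, namely to match the PBW generators of the sub-quotient $\mathcal{K}$ (equivalently, the non-nilpotency of $\bar{y}_n$) with the positive roots of $V$ on the slope-$n$ line, and to pin the new generator at degree exactly $N\beta_n$ rather than at some intermediate $m\beta_n$ with $1<m<N$. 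I expect this to follow from the structure of the PBW basis of $\NA(V)$ restricted to $K_{\geq n}$ together with the fact that $N$ is precisely the first power at which $\bar{y}_n$ both survives and becomes primitive, so that the relation which would otherwise impose $y_n^N=0$ is instead replaced by a generator; once this is established, $N\beta_n\in\roots^V_+$ and, having trivial self-braiding, it is a root of infinite height.
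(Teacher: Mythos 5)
Your first two paragraphs are essentially correct and establish a genuinely interesting intermediate fact: passing to $\mathcal{K}=K_{\geq n}/K_{>n}$, the class of $y_n$ is primitive (as the paper itself observes, since $x_1^k\in K_{>n}$), its $N$-th power is again primitive with trivial self-braiding by the braided binomial formula, and the characteristic-zero binomial argument then yields $y_n^{Nk}\neq 0$ in $\NA(V)$ for all $k$. This is sound, and close in spirit to how the paper uses these subquotients elsewhere (Lemma \ref{le:technical-0}).

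However, the lemma does not assert non-nilpotency of $y_n$; it asserts that $N\beta_n\in\roots_+$, i.e.\ that some PBW generator in \eqref{eq:PBW-Kharchenko} has degree exactly $N\beta_n$. That is precisely the step you leave open (``the main obstacle \dots I expect this to follow \dots''), so the proposal does not prove the statement, and the gap is mathematical, not merely expository. Non-vanishing of the components $\NA^{kN\beta_n}(V)$ does not by itself produce a generator on the line: those components could a priori be spanned by PBW monomials built from generators whose degrees lie off the line (such as $x_1$, $x_2$, or the $y_m$ with $m\neq n$; e.g.\ $y_{n+1}y_{n-1}$ has degree $2\beta_n$), or from generators at intermediate multiples $m\beta_n$ with $1<m<N$ --- and degrees beyond $\beta_n$ on the line do carry generators in nature, cf.\ Lemma \ref{le:doubleroot}. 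What closes this gap in the paper is Kharchenko's theory together with a Lyndon-word computation: by \cite{Kh}, either $N\beta_n$ is a root, or $y_n^N$ is a linear combination of PBW monomials in hyperletters corresponding to Lyndon words strictly greater than $x_1^nx_2$; any such Lyndon word has the form $x_1^{k_1}x_2\cdots x_1^{k_r}x_2$ with all $k_j\le n$ and $k_1+\cdots +k_r<rn$, so every such monomial has $\alpha_1$-degree strictly smaller than $n$ times its $\alpha_2$-degree and cannot have degree $N\beta_n$; hence the second alternative forces $y_n^N=0$, a contradiction. Your coideal-subquotient detour does not substitute for this combinatorial step: even with $y_n^{Nk}\neq 0$ for all $k$ in hand, one still needs exactly this slope argument (or an equivalent statement matching the PBW generators of $\mathcal{K}$ with roots of $V$, which you do not supply) to pin a generator at degree exactly $N\beta_n$. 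The essential content of the lemma is therefore missing from the proposal.
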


\begin{proof}
By the definition of roots of $\NA (V)$, either
$\deg y_n^N=N(n\alpha_1+\alpha_2)$ is a root of $V$ or $y_n^N$
can be expressed as a linear combination of products $\ell_1^{m_1}\cdots \ell_k^{m_k}$ 
as in \eqref{eq:PBW-Kharchenko}, where each $\ell_i$ corresponds either to a Lyndon word $l_i$  greater than $x_1^nx_2$, or else to a power of this kind of letters. 

Assume the last case holds. Then each $l_i$ starts with $1^k2$,
where $k\le n$, and ends with $2$. Since $l_i$ is a Lyndon word, any end of $l_i$
is larger than $v$, and hence it contains no subword $1^l2$ with $l>k$. Therefore $l_i=x_1^{k_1}x_2\cdots x_1^{k_r}x_2$ with $k_1,\dots ,k_r\le n$ and
$k_1+\cdots +k_r<rn$. This implies that $\deg \ell_1 \cdots \ell_m \ne \deg y_n^N$ so $y_n^N=0$, a contradiction. Thus $y_n$ has infinite height and hence $N\beta_n$ is a root of $\NA (V)$.
\end{proof}

\begin{lem} \label{le:um^2}
Let $m\in \N _0$ with $y_m\ne 0$.
Then $y_m^2=0$ if and only if $p_m=-1$ and $y_{m+1}=0$.
\end{lem}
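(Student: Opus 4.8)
The statement is a biconditional, so the plan is to prove the two implications separately. I expect the forward direction (``only if'') to reduce almost entirely to machinery already in place, while the converse asks for a short direct computation with the skew-derivations.

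For the implication $y_m^2=0 \Rightarrow (p_m=-1 \text{ and } y_{m+1}=0)$, assume first $m\ge 1$. Since $y_m^2=0$ lies trivially in $\langle y_ry_s:\,0\le r<s\rangle$, and $y_m\neq 0$ by hypothesis, Lemma~\ref{le:yy2} applies with $l=m$ and yields at once $y_{m+1}=0$ together with $q_{11}^{m^2}(\qt_{12})^mq_{22}=-1$. It then only remains to read off from \eqref{eq:def-pn} that $q_{11}^{m^2}(\qt_{12})^mq_{22}=q_{11}^{m^2}q_{12}^mq_{21}^mq_{22}=p_m$, so the condition is exactly $p_m=-1$. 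Thus this direction carries no new content beyond Lemma~\ref{le:yy2}.

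For the converse, assume $p_m=-1$ and $y_{m+1}=0$. The key observation is that $y_{m+1}=(\ad_c x_1)y_m=x_1y_m-q_{11}^mq_{12}\,y_mx_1$, so $y_{m+1}=0$ gives the commutation rule $x_1y_m=q_{11}^mq_{12}\,y_mx_1$, and by iteration $x_1^my_m=q_{11}^{m^2}q_{12}^m\,y_mx_1^m$. Substituting this into the formula \eqref{eq:deriv-ysyr} with $r=s=m$, namely $\partial_2(y_m^2)=\mu_mq_{21}^mq_{22}\,x_1^my_m+\mu_m\,y_mx_1^m$, gives $\partial_2(y_m^2)=\mu_m(q_{11}^{m^2}q_{12}^mq_{21}^mq_{22}+1)\,y_mx_1^m=\mu_m(p_m+1)\,y_mx_1^m$, which vanishes because $p_m=-1$. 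On the other hand $\partial_1(y_m^2)=0$ is immediate from $\partial_1(y_m)=0$ in \eqref{eq:derivations-yk} and the Leibniz rule. As $y_m^2$ is homogeneous of degree $2\beta_m\neq 0$ and lies in $\ker\partial_1\cap\ker\partial_2$, the standard fact that in a Nichols algebra of diagonal type the skew-derivations have trivial joint kernel in positive degree forces $y_m^2=0$. Note that this computation is valid for every $m\in\N_0$, and for $m=0$ it reduces to $\partial_2(x_2^2)=(1+q_{22})x_2$, the usual quadratic relation.

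The step I would treat as the main obstacle is the forward implication, and the whole point is that Lemma~\ref{le:yy2} already isolates its genuine content (controlling the top term $y_{2m}$ against the lower PBW monomials and the ensuing linear independence); without it one would have to redo that analysis by hand. The only additional care needed is at the boundary: Lemma~\ref{le:yy2} requires $m\ge 1$, so the forward direction should be stated for $m\ge 1$, and throughout one uses that $y_m\neq 0$ forces $(m)_{q_{11}}^!\mu_m\neq 0$, hence $\mu_m\neq 0$, by Lemma~\ref{lemmata:rosso}\ref{item:diagonal-relations}, which is exactly what keeps the derivation computations nondegenerate.
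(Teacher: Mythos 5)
Your proof is correct and follows essentially the same route as the paper: the forward implication is deduced from Lemma~\ref{le:yy2}, and the converse from computing $\partial_2(y_m^2)$ via the commutation rule $x_1y_m=q_{11}^mq_{12}\,y_mx_1$ forced by $y_{m+1}=0$, together with $\partial_1(y_m^2)=0$. Your boundary remark is apt; the paper's own proof also invokes Lemma~\ref{le:yy2} (stated only for $l\in\N$) without commenting on the case $m=0$.
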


\begin{proof}
If $y_m^2=0$, then Lemma \ref{le:yy2} says that $y_{m+1}=0$ and $p_m=-1$. 

Conversely, assume that $y_{m+1}=0$, $y_m\ne 0$, and $p_m=-1$. Then
\begin{align*}
\partial_2(y_m^2)&=\mu_m
(y_mx_1^m+x_1^mq_{21}^mq_{22}y_m)\\
&=\mu_m(1+(q_{11}^mq_{12})^mq_{21}^mq_{22})y_mx_1^m=0.
\end{align*}
Since $\partial_1(y_m)=0$, we conclude that $y_m^2=0$.
\end{proof}

\begin{lem} \label{le:doubleroot}
Let $m\in \N _0$.
Assume that $p_m=-1$ and $y_{m+1}\ne
0$. Then $2\beta_m$ is a root of $V$ and
$q_{2\beta_{m}\, 2\beta_m}=1$.
\end{lem}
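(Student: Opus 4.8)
The plan is to treat the two conclusions separately, the second being purely formal. For the character identity, recall that $\bq$ is a bicharacter on $\Z^2$, hence bilinear in each argument; therefore
\begin{align*}
q_{2\beta_m,\,2\beta_m}=\bq(2\beta_m,2\beta_m)=\bq(\beta_m,\beta_m)^{4}=p_m^{4}=(-1)^{4}=1,
\end{align*}
using $p_m=-1$. So the real content is the assertion that $2\beta_m$ is a root of $V$.

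For this I would reduce to proving $y_m^2\ne 0$ and then appeal to the root criterion already available. First, since $y_{m+1}=(\ad_c x_1)\,y_m$, the hypothesis $y_{m+1}\ne 0$ forces $y_m\ne 0$; thus Lemma~\ref{le:um^2} is applicable at $m$. That lemma asserts that $y_m^2=0$ if and only if simultaneously $p_m=-1$ and $y_{m+1}=0$. In our situation the first condition holds but the second fails, so the characterization yields $y_m^2\ne 0$.

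It then remains to convert $y_m^2\ne 0$ into a statement about roots. Since $p_m=-1\in\G'_2$, we are exactly in the situation set up before Lemma~\ref{le:yNneq0isroot}, with the distinguished root of unity being $q=p_m$ of order $N=2$. Applying Lemma~\ref{le:yNneq0isroot} with $n=m$ then gives that $N\beta_m=2\beta_m$ is a root of $V$, which finishes the proof.

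The argument is short precisely because the difficult analytic input — the skew-derivation computations behind Lemmas~\ref{lem:derivation-auxiliar} and \ref{le:yy2}, and the Lyndon-word/PBW analysis of Lemma~\ref{le:yNneq0isroot} — is already in place; here one only has to recognize that $N=2$ and chain the two lemmas, so there is no real obstacle. The single point deserving attention is the degenerate index $m=0$, where $p_0=q_{22}=-1$ forces $x_2^2=0$ in $\NA(V)$ and hence $2\alpha_2$ fails to be a root; accordingly the pertinent range is $m\ge 1$, which is exactly where Lemma~\ref{le:yNneq0isroot} is formulated (its preamble fixes $n>0$), and there the reduction above is complete.
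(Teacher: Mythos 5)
Your proof is correct and follows essentially the same route as the paper's: Lemma \ref{le:um^2} yields $y_m^2\neq 0$ from $y_{m+1}\neq 0$, Lemma \ref{le:yNneq0isroot} (with $n=m$, $q=p_m$, $N=2$) converts $y_m^2\neq 0$ into $2\beta_m$ being a root, and the bicharacter identity gives $q_{2\beta_m\,2\beta_m}=p_m^4=1$. Your caveat about $m=0$ is moreover a genuine catch that the paper's own proof glosses over: the lemma is stated for $m\in\N_0$, but at $m=0$ the hypotheses $q_{22}=-1$ and $y_1\neq 0$ are simultaneously satisfiable (take $\qt_{12}\neq 1$), while $q_{22}=-1$ forces $x_2^2=0$, hence $\NA^{2\alpha_2}(V)=0$ and $2\alpha_2$ cannot be a root; so the statement really requires $m\geq 1$, exactly the range where Lemma \ref{le:yNneq0isroot} ($n>0$) and the forward direction of Lemma \ref{le:um^2} (which rests on Lemma \ref{le:yy2} with $l\in\N$) are available. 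This restriction is harmless for the paper, since both applications of the lemma (Corollary \ref{cor:crucialGK1} with $m=2N-1$ and Lemma \ref{le:technical-1} with $m=1$) have $m\geq 1$.
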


\begin{proof}
As $y_{m+1}\ne0$, Lemma \ref{le:um^2} says that $y_m^2\ne 0$. Now Lemma \ref{le:yNneq0isroot} applies and $2\beta_m$ is a root. Finally, 
$q_{2\beta_{m}\, 2\beta_m}=p_m^4=1$.
\end{proof}

\subsubsection{On the $w_m$'s}\label{subsubsec:wm}
We consider the following elements of $\NA (V)$:
\begin{align} 
\label{eq:wm}
w_m&=y_{m+2}y_m-q_{\beta_{m+2},\beta_m} y_my_{m+2}, &  m &\in \N _0.
\end{align}
Notice that $w_m$ is $\N _0^2$-homogeneous of degree $2\beta_{m+1}$.

\medbreak
Let $m\in \N _0$. Assume that $y_{m+2}\ne 0$ and $q_{\beta_{m+1} \, \beta_{m+1}}\ne -1$.
Let
\begin{align}\label{eq:def-w-tilde}
\wt_m &=
w_m-\frac {q_{\beta_{m+1} \, \beta_m}
(m+2)_{q_{11}}(1-q_{11}^{m+1}\qt_{12})}{1+
q_{\beta_{m+1} \, \beta_{m+1}}}y_{m+1}^2.
\end{align}

Our next goal is to determine when $\wt_m=0$.

\begin{lem} \label{le:wm}
Let $m\in \N _0$. Assume that 
\begin{align*}
y_{m+2}&\ne 0,&  p_{m+1}&\ne -1, &
w_k &\in \Bbbk y_{k+1}^2, \quad 0 \le k<m.
\end{align*}
Then the following are equivalent:
\begin{enumerate}
\item\label{item:wm-1} $w_m\in \Bbbk y_{m+1}^2$.
\item\label{item:wm-2} $\wt_m=0$.
\item\label{item:wm-3} $\partial_1^m\partial_2 (\wt_m)=0$.
\item\label{item:wm-4} The following equation holds: 
\begin{align*}
0= \Big(1- \frac{p_{m+1}}{q_{11}} \Big) \Big( 1+ \frac{p_{m+1}}{q_{11}} + \frac{p_m
	(m+2)_{q_{11}}^!(1-q_{11}^m\qt_{12})(1-q_{11}^{m+1}\qt_{12})}{(m)_{q_{11}}^!(1+q_{11})(1+p_{m+1})}\Big).
\end{align*}
\end{enumerate}
\end{lem}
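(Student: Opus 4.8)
The four statements are naturally arranged in a cycle, and I would prove the implications $\eqref{item:wm-2}\Rightarrow\eqref{item:wm-1}\Rightarrow\eqref{item:wm-3}\Rightarrow\eqref{item:wm-4}\Rightarrow\eqref{item:wm-2}$, exploiting the fact that $\wt_m$ is $\N_0^2$-homogeneous of degree $2\beta_{m+1}=(2m+2)\alpha_1+2\alpha_2$. The implication $\eqref{item:wm-2}\Rightarrow\eqref{item:wm-1}$ is immediate from the definition \eqref{eq:def-w-tilde}, since $\wt_m=0$ means precisely $w_m=\lambda y_{m+1}^2$ for the explicit scalar $\lambda$. For $\eqref{item:wm-1}\Rightarrow\eqref{item:wm-3}$, note that $\partial_1^m\partial_2$ is $\N_0^2$-graded and kills $y_{m+1}^2$ in the appropriate degree range (analogously to the vanishing computations in Lemmas~\ref{lem:derivation-auxiliar} and \ref{le:der_unN}); so if $w_m\in\Bbbk y_{m+1}^2$ then $\wt_m\in\Bbbk y_{m+1}^2$ as well, and applying $\partial_1^m\partial_2$ to a multiple of $y_{m+1}^2$ gives zero. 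The serious content lies in the remaining two implications.

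\textbf{The main computation.}
The heart of the argument is to evaluate $\partial_1^m\partial_2(\wt_m)$ explicitly and show it equals a nonzero scalar multiple of the expression in \eqref{item:wm-4} times $y_m$ (or $y_mx_1^{\,?}$, a nonzero homogeneous element). Concretely, I would compute $\partial_1^m\partial_2(w_m)$ and $\partial_1^m\partial_2(y_{m+1}^2)$ separately. Using \eqref{eq:derivations-yk}, \eqref{eq:wm}, and the Leibniz-type rule for $\partial_2$ (as in \eqref{eq:deriv-ysyr}), together with repeated application of $\partial_1$ via the identity $\partial_1^{t}(x_1^{s})=(s)_{q_{11}}\cdots(s-t+1)_{q_{11}}x_1^{s-t}$ from Lemma~\ref{lem:derivation-auxiliar}, one reduces both quantities to scalar multiples of the single element $y_m$ (all higher $y_r y_s$-terms with $r<m$ being annihilated by $\partial_1^m$, exactly as in the proofs of Lemmas~\ref{le:yy1} and \ref{le:yy2}). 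The hypothesis $w_k\in\Bbbk y_{k+1}^2$ for $0\le k<m$ is what guarantees that the lower-degree terms produced along the way collapse into the controlled form, so that the final coefficient is precisely the bracketed quantity in \eqref{item:wm-4}, up to the nonzero factor $\mu_{m+1}(m)_{q_{11}}^!$ and powers of $q_{11}^mq_{12}$. Since $\mu_{m+1}\neq0$ (because $y_{m+2}\ne0$ forces $y_{m+1}\ne0$, hence $\mu_{m+1}\neq0$ by Lemma~\ref{lemmata:rosso}\ref{item:diagonal-relations}) and $(m)_{q_{11}}^!\neq0$, the vanishing $\partial_1^m\partial_2(\wt_m)=0$ is equivalent to the vanishing of that bracket, giving $\eqref{item:wm-3}\Leftrightarrow\eqref{item:wm-4}$.

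\textbf{Closing the cycle.}
Finally, for $\eqref{item:wm-4}\Rightarrow\eqref{item:wm-2}$ I would use that $\wt_m$ is homogeneous of degree $2\beta_{m+1}$, so it lies in the span of the $y_ry_s$ with $r+s=2m+2$; the derivations $\partial_1^m\partial_2$ detect the top components (those with $s\ge m+1$), and by the inductive control coming from the $w_k\in\Bbbk y_{k+1}^2$ hypotheses the only possibly-surviving component is a multiple of $y_m$-type terms. Thus $\partial_1^m\partial_2(\wt_m)=0$ forces $\wt_m=0$ outright, because a nonzero homogeneous element of this degree with the given structure cannot be killed by this derivation. The main obstacle, as anticipated, is the bookkeeping in the central computation: one must track the $q$-commutation factors $q_{\beta_{m+1}\,\beta_m}$, $q_{\beta_{m+1}\,\beta_{m+1}}=p_{m+1}$, and the Gauss-binomial coefficients arising from \eqref{eq:copr_um} carefully enough that the messy intermediate expression factors into the clean product $\bigl(1-\tfrac{p_{m+1}}{q_{11}}\bigr)\bigl(1+\tfrac{p_{m+1}}{q_{11}}+\cdots\bigr)$ of \eqref{item:wm-4}. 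I expect the factor $1-p_{m+1}/q_{11}$ to emerge from the antisymmetrization in the definition of $w_m$, while the second factor collects the correction term coming from subtracting the $y_{m+1}^2$ multiple.
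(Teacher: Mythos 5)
Your cycle breaks at two places, and both are at the heart of the lemma.

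First, your implication \eqref{item:wm-1}$\Rightarrow$\eqref{item:wm-3} rests on the claim that $\partial_1^m\partial_2$ annihilates $\Bbbk y_{m+1}^2$. That is false precisely under the hypotheses of the lemma: a direct computation (the one the paper does at the start of its proof) gives
\begin{align*}
\partial_1^m\partial_2(y_{m+1}^2) = \mu_{m+1}(m+1)^!_{q_{11}}\big(q_{\beta_{m}\,\beta_{m+1}}\,y_{m+2}+(1+p_{m+1})\,y_{m+1}x_1\big),
\end{align*}
which is nonzero because $y_{m+2}\ne 0$, $p_{m+1}\ne -1$, and $y_{m+2}$, $y_{m+1}x_1$ are linearly independent. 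Consequently, knowing only $w_m=\lambda y_{m+1}^2$ for \emph{some} $\lambda$ does not give $\partial_1^m\partial_2(\wt_m)=0$; you would also need $\lambda$ to equal the coefficient in \eqref{eq:def-w-tilde}. That is exactly the content of \eqref{item:wm-1}$\Rightarrow$\eqref{item:wm-2}, which the paper proves by applying $\partial_1^{m+1}\partial_2$ to both $w_m$ and $y_{m+1}^2$: each yields a nonzero multiple of $y_{m+1}$, and comparing the two multiples pins $\lambda$ down to the right value. This step cannot be bypassed; the cycle should run \eqref{item:wm-1}$\Rightarrow$\eqref{item:wm-2}$\Rightarrow$\eqref{item:wm-3} instead.

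Second, and more seriously, your closing implication \eqref{item:wm-4}$\Rightarrow$\eqref{item:wm-2} is asserted, not proved: ``a nonzero homogeneous element of this degree with the given structure cannot be killed by this derivation'' is precisely the statement that needs an argument, and it is the hardest part of the lemma, since a single application of $\partial_1^m\partial_2$ has a large kernel on the component of degree $2\beta_{m+1}$. The paper's argument is a downward induction showing $\partial_1^{m-k}\partial_2(\wt_m)=0$ for all $0\le k\le m$ (ending with $\partial_2(\wt_m)=0$, which together with $\partial_1(\wt_m)=0$ forces $\wt_m=0$): at each step, the preceding step plus the degree computation $\ker\partial_1\cap\NA^{(m+k+2)\alpha_1+\alpha_2}(V)=\Bbbk y_{m+k+2}$ shows $\partial_1^{m-k}\partial_2(\wt_m)=b\,y_{m+k+2}$, and then $b=0$ is obtained by evaluating $\iota(w_{m-k})(\wt_m)$ in two ways via the duality isomorphism $\iota:\NA(V)\to\NA(V^*)$ --- once from the definition of $w_{m-k}$, giving $b\,\mu_{m+k+2}(m+k+2)^!_{q_{11}}/(m-k+2)^!_{q_{11}}$, and once from the hypothesis $w_{m-k}\in\Bbbk y_{m-k+1}^2$, giving $0$. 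This is the \emph{only} place where the assumption $w_k\in\Bbbk y_{k+1}^2$ for $k<m$ enters, so an argument that merely invokes ``inductive control'' without this (or an equivalent) mechanism has not actually used the hypothesis and cannot be complete. Your treatment of \eqref{item:wm-3}$\iff$\eqref{item:wm-4} is in the right spirit (though the output of $\partial_1^m\partial_2$ lands in the span of $y_{m+2}$ and $y_{m+1}x_1$, not of $y_m$-type terms), but the two gaps above mean the proposal does not establish the lemma.
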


\begin{proof}
First we compute
\begin{align*} 
&\partial_1^m\partial_2(w_m)=
\partial_1^m\Big( \mu_{m+2}x_1^{m+2}q_{21}^mq_{22}y_m
+y_{m+2}\mu_{m}x_1^m\Big)\\
&\quad-q_{\beta_{m+2} \, \beta_{m}}\Big(
\mu_{m}x_1^mq_{21}^{m+2}q_{22}y_{m+2}
+y_m\mu_{m+2}x_1^{m+2} \Big)\\
&=\mu_{m+2}q_{\beta_{m} \, \beta_{m}}\frac{(m+2)^!_{q_{11}}}{1+q_{11}} (x_1^2y_m- q_{11}^{2m}q_{12}^2y_mx_1^2)
\\
&\quad +\mu_{m} (m)^!_{q_{11}}(1-q_{\beta_{m+2} \, \beta_{m}}
q_{\beta_{m} \, \beta_{m+2}})y_{m+2}.
\end{align*}
Similarly, we compute
\begin{align*}
\partial_1^m\partial_2(y_{m+1}^2) &= \mu_{m+1}(m+1)^!_{q_{11}}
\big(q_{\beta_{m} \, \beta_{m+1}}y_{m+2}
+(1+p_{m+1})y_{m+1}x_1\big),
\end{align*}
see also the calculation in the proof of Lemma~\ref{le:um^2}.

\eqref{item:wm-1}$\implies $\eqref{item:wm-2}. Using the previous formulas one obtains quickly that
\begin{align*}
\partial_1^{m+1}\partial_2(w_m)&=\mu_{m+2} q_{\beta_{m+1} \, \beta_m}(m+2)^!_{q_{11}}y_{m+1},
\\
\partial_1^{m+1}\partial_2(y_{m+1}^2) &= \mu_{m+1}(m+1)^!_{q_{11}}
(1+p_{m+1})y_{m+1}.
\end{align*}
Since $y_{m+1}\ne 0$, this implies the claim.

\eqref{item:wm-2}$\implies $\eqref{item:wm-3}. Trivial.

\eqref{item:wm-3}$\iff $\eqref{item:wm-4}.
We notice that
$$x_1^2y_m-q_{11}^{2m}q_{12}^2y_mx_1^2
=y_{m+2}+(1+q_{11})q_{11}^mq_{12}y_{m+1}x_1.$$
Since $y_{m+2}$ and $y_{m+1}x_1$ are linearly independent in $\NA (V)$,
the formulas at the beginning of the proof imply that \eqref{item:wm-3} is equivalent to
\begin{align*}
&\frac{p_m(m+1)_{q_{11}}(m+2)_{q_{11}}(1-q_{11}^m\qt_{12})(1-q_{11}^{m+1}\qt_{12})}{1+q_{11}}
+1-q_{11}^{-2}p_{m+1}^2\\
&-\frac{q_{11}^{-1}p_mp_{m+1}(m+1)_{q_{11}}(m+2)_{q_{11}}(1-q_{11}^m\qt_{12})(1-q_{11}^{m+1}\qt_{12})}{1+p_{m+1}}
=0.
\end{align*}
This gives the equivalence between \eqref{item:wm-3} and \eqref{item:wm-4}.

\eqref{item:wm-3}$\implies $\eqref{item:wm-1}. 
We prove by induction on $k$ that
\begin{align} \label{eq:delkwm}
\partial_1^{m-k}\partial_2(\wt_m)&=0 & \text{for all }& 0\le k\le m.
\end{align}
If so, then $\partial_i(\wt_m)=0$ for $i = 1,2$
and then $\wt_m=0$.

For $k=0$, \eqref{eq:delkwm} holds by assumption. Now let $k>0$ and assume the statement holds for $j<k$. Notice that
\begin{align*}
\ker \partial _1 & \cap \toba^{(m+k+2)\alpha_1+\alpha_2} =\Bbbk y_{m+k+2}
\\ &\quad \implies  \, \partial_1^{m-k}\partial_2(\wt_m) = b\, y_{m+k+2} 
\text{ for some }b\in\Bbbk.
\end{align*}
We may assume that $y_{m+k+2}\neq 0$, otherwise the induction step holds.

Let $\iota:V\to V^*$ be the linear isomorphism with $x_i\mapsto \partial_i$
for $i\in \I$. Then $\iota$ is an isomorphism of braided vector
spaces and hence induces an isomorphism between the Hopf algebras 
$\NA(V)$ and $\NA (V^*)$ in $\ydh$. Notice that
\begin{align}\label{eq:derivation-wm}
\iota(y_{m-j})(\wt_m)&=\partial_1^{m-j}\partial_2(\wt_m)=0 & 
\mbox{for all }&0\le j<k,
\end{align}
since $y_j=x_1^jx_2+$ terms ending in $x_1$ and 
$\partial_1(\wt_m)=0$. Hence
\begin{align*}
\iota(w_{m-k})(\wt_m) &= \iota(y_{m-k+2}y_{m-k}-q_{\beta_{m-k+2},\beta_{m-k}} y_{m-k}y_{m-k+2})(\wt_m)
\\
&= \iota(y_{m-k+2})\iota(y_{m-k})(\wt_m) 
= b \iota(y_{m-k+2})(y_{m+k+2})
\\
&= b \partial_1^{m-k+2}\partial_2 (y_{m+k+2}) 
= b\mu_{m+k+2}\frac{(m+k+2)_{q_{11}}^!}{(m-k+2)_{q_{11}}^!}.
\end{align*}
On the other hand, using that $w_{m-k}=a_{m-k}y_{m-k+1}^2$ for some $a_{m-k}\in\Bbbk$ and the inductive hypothesis,
\begin{align*}
\iota(w_{m-k})(\wt_m) &= a_{m-k}\iota(y_{m-k+1}^2)(\wt_m)
= a_{m-k}\iota(y_{m-k+1})\partial_1^{m-k+1}\partial_2(\wt_m)=0.
\end{align*}
Hence $b=0$, which completes the inductive step.
\end{proof}

\begin{remark}\label{rem:wm-bis}
In the following cases, the right hand side of the equation in Lemma~\ref{le:wm} \eqref{item:wm-4} is equal to the following:
\begin{enumerate}[leftmargin=*,label=\rm{(\roman*)}]
\item\label{item:wm-bis-1} $m=0$: $(1+q_{22})(1-\qt_{12}q_{22})(1+q_{11}\qt_{12}^{\, 2}q_{22})
(1+q_{11}\qt_{12}q_{22})^{-1}$.
\item\label{item:wm-bis-2} $m=1$, $q_{22}=-1$:
$$(1+q_{11}^3\qt^{\, 2}_{12})
(1-q_{11}^3\qt_{12})(3)_{-q_{11}\qt_{12}}(1+q_{11}^2\qt_{12})^{-1}.$$
\item\label{item:wm-bis-3} $m=1$, $\qt_{12}q_{22}=1$:
$$ (1-q_{11}^3\qt_{12})(4)_{q_{11}}(3)_{-q_{11}^2\qt_{12}}
(1+q_{11}^4\qt_{12})^{-1}.
$$
\item\label{item:wm-bis-4} $m=1$, $q_{11}\qt_{12}^{\, 2}q_{22}=-1$:
$$ (1+q_{11}^2)(1-\qt_{12}^{\, \, -1})(1-q_{11}^3\qt_{12})(1-q_{11})^{-1}.
$$
\item\label{item:wm-bis-5} $m=2$, $q_{22}=-1$:
\begin{align*}
& (1+q_{11}^8\qt_{12}^{\, 3})(1-q_{11}^4\qt_{12})
(3)_{q_{11}^3\qt_{12}}^{-1}\cdot\\
&\quad \big( q_{11}^{10}\qt_{12}^{\, 4}+(q_{11}^7+q_{11}^6)\qt_{12}^{\, 3}
-(3)_{q_{11}}q_{11}^4\qt_{12}^{\, 2}+(q_{11}^4+q_{11}^3)\qt_{12}+1\big).
\end{align*}
\item\label{item:wm-bis-6} $m=2$, $\qt_{12}q_{22}=1$, $q_{11}^2=-1$:
$1-\qt_{12}^4$.
\item\label{item:wm-bis-7} $m=2$, $\qt_{12}q_{22}=1$, $(3)_{-q_{11}^2\qt_{12}}=0$:
$$ (1+q_{11}^4\qt_{12})(1-q_{11}^4\qt_{12})(1-q_{11}^5\qt_{12})(5)_{q_{11}}(3)_{-q_{11}}
(1+q_{11}^9\qt_{12}^{\, 2})^{-1}.
$$
\end{enumerate}
\end{remark}

Recall the definition of $\wt_m$, $m\in\N_0$, given in \eqref{eq:def-w-tilde}.
We study in the next Lemmas when $\wt_m\neq 0$ for small values of $m$.

\begin{lem}\label{lem:conditions-w0=0}
Assume that $y_2\ne 0$, $p_1\neq -1$. Then $\wt_0=0$ if and only if
$$ (\qt_{12}q_{22}-1)(q_{22}+1)(q_{11}\qt_{12}^{\, 2}q_{22}+1)=0. $$
\end{lem}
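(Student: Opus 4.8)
The plan is to invoke Lemma~\ref{le:wm} in the case $m=0$ and then read off the explicit polynomial recorded in Remark~\ref{rem:wm-bis}~\ref{item:wm-bis-1}. First I would verify that the hypotheses of Lemma~\ref{le:wm} hold for $m=0$. Those hypotheses are $y_{m+2}\neq 0$, $p_{m+1}\neq -1$, and $w_k\in\Bbbk y_{k+1}^2$ for $0\le k<m$; for $m=0$ they read $y_2\neq 0$, $p_1\neq -1$, together with an empty condition on the $w_k$. The first two are exactly the standing assumptions of the present Lemma, and the third is vacuous. Note in passing that $p_1\neq-1$ is also precisely what makes $\wt_0$ well defined, since for $m=0$ the denominator in \eqref{eq:def-w-tilde} is $1+q_{\beta_1\,\beta_1}=1+p_1$.

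With the hypotheses in place, Lemma~\ref{le:wm} yields the equivalence of its items \eqref{item:wm-2} and \eqref{item:wm-4}: $\wt_0=0$ if and only if the equation in \eqref{item:wm-4} holds for $m=0$. The next step is simply to evaluate the right-hand side of that equation at $m=0$, which is exactly the content of Remark~\ref{rem:wm-bis}~\ref{item:wm-bis-1}, namely
$$(1+q_{22})(1-\qt_{12}q_{22})(1+q_{11}\qt_{12}^{\,2}q_{22})(1+q_{11}\qt_{12}q_{22})^{-1}.$$

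Finally I would observe that the denominator $1+q_{11}\qt_{12}q_{22}$ equals $1+p_1$, which is nonzero by hypothesis; hence the displayed expression vanishes exactly when its numerator does, i.e.\ when $(1+q_{22})(1-\qt_{12}q_{22})(1+q_{11}\qt_{12}^{\,2}q_{22})=0$. Since $1-\qt_{12}q_{22}=-(\qt_{12}q_{22}-1)$, this is precisely the asserted condition $(\qt_{12}q_{22}-1)(q_{22}+1)(q_{11}\qt_{12}^{\,2}q_{22}+1)=0$. In this argument the genuine work is entirely absorbed into Lemma~\ref{le:wm} and into the algebraic simplification behind Remark~\ref{rem:wm-bis}~\ref{item:wm-bis-1}; granting those, the proof is a direct substitution, and the only point that must not be overlooked is that the hypothesis $p_1\neq-1$ is used twice — once to legitimately apply Lemma~\ref{le:wm} (and to make sense of $\wt_0$) and once to clear the denominator and reduce the vanishing condition to that of the numerator.
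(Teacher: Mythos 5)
Your proposal is correct and is exactly the paper's argument: the paper's proof reads, in full, ``The claim follows by Lemma~\ref{le:wm} and Remark~\ref{rem:wm-bis}~\ref{item:wm-bis-1}.'' Your write-up simply makes explicit the verification of the hypotheses at $m=0$ and the observation that the denominator $1+q_{11}\qt_{12}q_{22}=1+p_1$ is nonzero, both of which the paper leaves implicit.
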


\begin{proof}
The claim follows by Lemma \ref{le:wm} and Remark \ref{rem:wm-bis} \ref{item:wm-bis-1}.
\end{proof}

Next we give conditions on the matrix $\bq$ which are equivalent to the equation $\wt_1=0$.

\begin{lem}\label{lem:conditions-w1=0}
Assume that $y_3\ne 0$, $p_2\neq -1$.
\begin{enumerate}[leftmargin=*,label=\rm{(\alph*)}]
\item\label{item:conditions-w1=0-1} If $q_{22}=-1$, then $\wt_1=0$ if and only if
\begin{align*}
(1-q_{11}^3\qt_{12})(q_{11}^3\qt_{12}^{\, 2}+1)(3)_{-q_{11}\qt_{12}}=0.
\end{align*}
\item\label{item:conditions-w1=0-2} If $\qt_{12}q_{22}=1$, then $\wt_1=0$ if and only if
\begin{align*}
(1-q_{11}^3\qt_{12})(q_{11}^2+1)(3)_{-q_{11}^2\qt_{12}}=0.
\end{align*}
\item\label{item:conditions-w1=0-3} If $q_{11}\qt_{12}^{\, 2}q_{22}=-1$, then $\wt_1=0$ if and only if
$(1-q_{11}^3\qt_{12})(q_{11}^2+1)=0$.
\end{enumerate}
\end{lem}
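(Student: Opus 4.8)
The plan is to reduce each of the three cases to the general criterion of Lemma~\ref{le:wm}, specifically the equivalence between \eqref{item:wm-2} and \eqref{item:wm-4} with $m=1$, and then simplify the right-hand side of the equation in \eqref{item:wm-4} under each of the three hypotheses on $\bq$. The point is that Lemma~\ref{le:wm} already tells us that, under the standing assumptions $y_3\neq 0$ and $p_2\neq -1$ (which play the role of $y_{m+2}\neq 0$ and $p_{m+1}\neq -1$ for $m=1$), the condition $\wt_1=0$ holds if and only if the displayed expression in \eqref{item:wm-4} vanishes. So the entire content of Lemma~\ref{lem:conditions-w1=0} is a routine-but-delicate simplification of that expression once we substitute the relevant relation among $q_{11}$, $q_{22}$ and $\qt_{12}$.

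First I would invoke Remark~\ref{rem:wm-bis}, which has already pre-computed the right-hand side of \eqref{item:wm-4} for exactly these three specializations: part \ref{item:wm-bis-2} handles $m=1$, $q_{22}=-1$; part \ref{item:wm-bis-3} handles $m=1$, $\qt_{12}q_{22}=1$; and part \ref{item:wm-bis-4} handles $m=1$, $q_{11}\qt_{12}^{\,2}q_{22}=-1$. Thus, assuming the computations recorded in that Remark, each case of the present Lemma is immediate. For case \ref{item:conditions-w1=0-1}, the expression from Remark~\ref{rem:wm-bis}\ref{item:wm-bis-2} is
\begin{align*}
(1+q_{11}^3\qt_{12}^{\,2})(1-q_{11}^3\qt_{12})(3)_{-q_{11}\qt_{12}}(1+q_{11}^2\qt_{12})^{-1},
\end{align*}
and since the denominator $1+q_{11}^2\qt_{12}$ is nonzero (this is precisely the hypothesis $p_2\neq -1$, as $p_2=q_{11}^4q_{12}^2q_{21}^2q_{22}=-q_{11}^4\qt_{12}^{\,2}$ when $q_{22}=-1$, so $p_2\ne -1$ translates into a nonvanishing of the relevant factor), the vanishing of that expression is equivalent to $(1-q_{11}^3\qt_{12})(q_{11}^3\qt_{12}^{\,2}+1)(3)_{-q_{11}\qt_{12}}=0$, which is the stated criterion. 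Cases \ref{item:conditions-w1=0-2} and \ref{item:conditions-w1=0-3} follow identically from parts \ref{item:wm-bis-3} and \ref{item:wm-bis-4} of the Remark, again after checking that the denominators ($1+q_{11}^4\qt_{12}$ and $1-q_{11}$ respectively) are nonzero; for case \ref{item:conditions-w1=0-3} the factor $(1-\qt_{12}^{\,-1})$ coming from the Remark must be seen to be equivalent, up to an invertible scalar, to the factor $(q_{11}^2+1)$ claimed in the statement, using the defining relation $q_{11}\qt_{12}^{\,2}q_{22}=-1$ to rewrite one in terms of the other.

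The main obstacle I anticipate is not conceptual but bookkeeping: verifying that in each case the denominator appearing in the Remark genuinely does not vanish under the hypothesis $p_2\neq -1$, and—most delicately—reconciling the precise list of linear factors in the Remark's output with the slightly different-looking factored forms asserted in the Lemma. In particular, the hypothesis $q_{11}\qt_{12}^{\,2}q_{22}=-1$ lets one trade the factor $(1-\qt_{12}^{\,-1})$ (and the factor $(1+q_{11}^2)$) for the product $(q_{11}^2+1)$ up to a unit, and getting this substitution exactly right—so that the two vanishing loci coincide and no spurious factor is dropped—is where care is needed. Once those elementary identities among the $q$-factors are confirmed, each biconditional follows at once, so the proof reduces to the short invocation of Lemma~\ref{le:wm} together with the three relevant items of Remark~\ref{rem:wm-bis}.
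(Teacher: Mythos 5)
There is a genuine gap: you apply Lemma~\ref{le:wm} with $m=1$ as if its only hypotheses were $y_{m+2}\neq 0$ and $p_{m+1}\neq -1$, but that Lemma has a third hypothesis, namely $w_k\in\Bbbk y_{k+1}^2$ for all $0\le k<m$, which for $m=1$ means $w_0\in \Bbbk y_1^2$. This hypothesis is not decorative: in the proof of Lemma~\ref{le:wm} the implication \eqref{item:wm-3}$\implies$\eqref{item:wm-1} is established by an induction that explicitly writes $w_{m-k}=a_{m-k}y_{m-k+1}^2$, so the direction \eqref{item:wm-4}$\implies$\eqref{item:wm-2} of the biconditional you need is simply not available without it. Your proposal never verifies $w_0\in\Bbbk y_1^2$, so the ``if'' half of each equivalence in the statement is unproved. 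This is exactly where the paper's proof spends its effort: it first observes that $y_3\neq 0$ forces $(3)_{q_{11}}^!\mu_3\neq 0$, i.e.\ $q_{11}\notin\G_2\cup\G_3$ and $q_{11}^k\qt_{12}\neq 1$ for $k=0,1,2$; then in each of the three cases it computes $p_1$ ($-q_{11}\qt_{12}$, $q_{11}$, $-\qt_{12}^{-1}$ respectively) and uses these constraints to conclude $p_1\neq -1$; finally, since each case hypothesis ($q_{22}=-1$, $\qt_{12}q_{22}=1$, or $q_{11}\qt_{12}^{\,2}q_{22}=-1$) is precisely one of the three factors in Lemma~\ref{lem:conditions-w0=0}, it deduces $\wt_0=0$, i.e.\ $w_0\in\Bbbk y_1^2$. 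Only then can Lemma~\ref{le:wm} and Remark~\ref{rem:wm-bis} be invoked as you do.

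A secondary, smaller error: in case \ref{item:conditions-w1=0-3} you say the factor $(1-\qt_{12}^{\,-1})$ from Remark~\ref{rem:wm-bis}\ref{item:wm-bis-4} must be ``traded'' for $(q_{11}^2+1)$ using the relation $q_{11}\qt_{12}^{\,2}q_{22}=-1$. That is not what happens: the factor $(1+q_{11}^2)$ already appears in the Remark's expression, and $(1-\qt_{12}^{\,-1})$ is simply a nonzero scalar (because $\qt_{12}\neq 1$ follows from $y_3\neq 0$), so it is discarded rather than converted. The correct bookkeeping in all three cases is that every factor of the Remark's expression other than those listed in the Lemma is invertible as a consequence of $y_3\neq 0$ and $p_2\neq -1$; no algebraic trading is involved.
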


\begin{proof}
As we assume $y_3\neq 0$, we have that $(3)_{q_{11}}^{!}\mu_3\ne 0$. That is,
\begin{align*}
q_{11}&\notin \G_2\cup\G_3, & q_{11}^k\qt_{12} &\neq 1, \quad k=0,1,2.
\end{align*}

Assume that $q_{22}=-1$, so we have that $p_1=-q_{11}\qt_{12}\neq -1$. Hence $\wt_0=0$ by Lemma \ref{lem:conditions-w0=0}. Now \ref{item:conditions-w1=0-1} follows by Lemma \ref{le:wm} and Remark \ref{rem:wm-bis} \ref{item:wm-bis-2}.

\smallbreak

To prove \ref{item:conditions-w1=0-2}, we assume that $\qt_{12}q_{22}=1$. Again we have that $p_1=q_{11}\neq -1$, so $\wt_0=0$ by Lemma \ref{lem:conditions-w0=0}. Now we can apply Lemma \ref{le:wm} and Remark \ref{rem:wm-bis} \ref{item:wm-bis-3} and the claim follows.

\smallbreak

Finally, if $q_{11}\qt_{12}^{\, 2}q_{22}=-1$, then $p_1=-\qt_{12}^{-1}\neq -1$, so $\wt_0=0$ by Lemma \ref{lem:conditions-w0=0}. Hence \ref{item:conditions-w1=0-3} follows by Lemma \ref{le:wm} and Remark \ref{rem:wm-bis} \ref{item:wm-bis-4}.
\end{proof}

\subsection{Proof of Theorem \ref{th:infGK}}\label{subsec:proof-theta-2}

First we extend Lemma \ref{le:dimindefCartan} to any braided vector space of diagonal type and dimension two.

\begin{pro} \label{pro:rootweight1}
Assume that $V$ is of dimension two and $\qt_{12}\neq 1$.
If there is a root $\gamma $ of $V$ such that $q_{\gamma \,\gamma}=1$,
then $\GK \NA(V)=\infty$.
\end{pro}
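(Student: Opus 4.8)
The plan is to argue by contradiction, mirroring the proof of Lemma~\ref{le:dimindefCartan} but replacing the Weyl group of a fixed Cartan matrix by the Weyl groupoid of the generalized root system. Suppose $\GK\NA(V)<\infty$. By Remark~\ref{rem:all-reflections-gkd}, $V$ admits all reflections, so the generalized root system $(\roots^U_+)_{U\in\X}$ is defined, and by Theorem~\ref{th:GK-reflections} every $U\in\X$ satisfies $\GK\NA(U)=\GK\NA(V)<\infty$. Since $q_{\gamma,\gamma}=1$, one has $(n)_{q_{\gamma,\gamma}}=n\neq0$ for all $n\in\N$, so the PBW generator of degree $\gamma$ has infinite height; thus $\gamma$ is a root of infinite height. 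The goal is to produce infinitely many roots of infinite height in a single object, whence $\GK=\infty$ by Lemma~\ref{lem:infinite-generators-pbw}, contradicting $\GK\NA(V)<\infty$.

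First I would record that the self-braiding is a groupoid invariant: by \eqref{eq:def-rho-ij}, $q_{s_i\alpha,s_i\beta}=q_{\alpha,\beta}$, so along any chain of reflections the self-braiding of a root is unchanged. In particular every root in the orbit of $\gamma$ across the objects $U\in\X$ again satisfies $q_{\cdot,\cdot}=1$, hence has infinite height. Next I would observe that $\gamma$ cannot be real: a real root is by definition of the form $w(\alpha_i)$ for a simple root $\alpha_i$ and a groupoid element $w$, and then $q_{\gamma,\gamma}=q^{U}_{ii}$ for the corresponding object $U\in\X$; but each $U$ is a braided vector space of diagonal type, so $q^U_{ii}\neq1$ (otherwise $U$ would have a vertex with self-braiding $1$ and $\qt\neq1$, forcing $\GK\NA(U)=\infty$ by Lemma~\ref{lemmata:rosso}~\ref{lemma:points-trivial-braiding}). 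Since $q_{\gamma,\gamma}=1$, the root $\gamma$ is imaginary, so the generalized root system is not finite.

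The heart of the argument, and the step I expect to be the main obstacle, is to deduce from the existence of one imaginary root that a single object carries infinitely many infinite-height roots. I would establish a rank-$2$ analogue of Lemma~\ref{le:indefinite}: for an infinite rank-$2$ Weyl groupoid the orbit of the imaginary root $\gamma$ is infinite, and I would pull this orbit back through the reflection maps $s_{i_1}s_{i_2}\cdots$ to realize infinitely many distinct positive roots of $V$ itself, all with self-braiding $1$. Concretely, because reflecting an imaginary root yields an imaginary root of the same self-braiding, one alternately applies $s_1$ and $s_2$ and tracks the images $\delta_k=s_{i_1}\cdots s_{i_k}(\gamma)\in\roots^V_+$; the nontermination of the rank-$2$ reflection sequence, equivalent to the infinitude of the root system just proved, should force the $\delta_k$ to be pairwise distinct and of unbounded height. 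The delicate point is ruling out periodicity of the $\delta_k$, i.e. showing the groupoid orbit genuinely grows; here I would argue as in Lemma~\ref{le:indefinite} using positivity and convexity of the coordinates, transported to the groupoid setting.

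Finally, with infinitely many $\delta_k\in\roots^V_+$ satisfying $q_{\delta_k,\delta_k}=1$, their PBW generators all have infinite height, so Lemma~\ref{lem:infinite-generators-pbw} gives $\GK\NA(V)=\infty$, contradicting the assumption; hence $\GK\NA(V)=\infty$. As an alternative to the orbit argument, should the groupoid combinatorics prove unwieldy, I would instead exploit the coideal subalgebra machinery of the previous subsection: taking $r$ to be the slope of $\gamma$, the braided Hopf algebra $K_{\geq r}/K_{>r}$ is again of diagonal type and contains the class of the root vector of degree $\gamma$ as a generator with self-braiding $1$; if one can exhibit within it a second generator interacting nontrivially, Lemma~\ref{lemmata:rosso}~\ref{lemma:points-trivial-braiding} together with the monotonicity $\GK(K_{\geq r}/K_{>r})\le\GK\NA(V)$ closes the argument. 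Exhibiting that neighbor, equivalently verifying that $\gamma$ does not sit in an isolated slope-component, is the analogue of the obstacle above.
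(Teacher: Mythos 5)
Your central claim---a rank-2 analogue of Lemma \ref{le:indefinite}, namely that in an infinite rank-2 Weyl groupoid the orbit of an imaginary root must be infinite---is false, and it fails in exactly the case the paper has to treat separately. Already for the affine Cartan matrix $\begin{pmatrix} 2 & -2 \\ -2 & 2\end{pmatrix}$ one has $s_1(\alpha_1+\alpha_2)=s_2(\alpha_1+\alpha_2)=\alpha_1+\alpha_2$: the root system and the groupoid are infinite, yet the orbit of the imaginary root is a single point. The proof of Lemma \ref{le:indefinite} uses indefiniteness essentially (its contradiction comes from the impossibility of $c\geq 0$, $Ac\geq 0$; for affine $A$ the vector $c=\delta$ satisfies both), so ``transporting positivity and convexity to the groupoid setting'' cannot remove this obstruction. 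The paper's proof confronts the finite-orbit case head-on: with $\X$ finite one gets $(s_1s_2)^k(\gamma)=\gamma$ and $(\cR^1\cR^2)^k(V)=V$ for some $k>0$; since $\det(s_1s_2)=1$ and $1$ is an eigenvalue of $(s_1s_2)^k$, this matrix is either the identity---whence the set of real roots is finite, the Weyl groupoid is finite and all roots are real by \cite{CH}, contradicting $q_{\gamma\,\gamma}=1$---or a shear mapping, in which case $\beta_n=(s_1s_2)^{nk}(\alpha_i)=\alpha_i+c_in\gamma$ gives infinitely many \emph{real} roots whose degrees grow linearly in $n$, and $\GK\NA(V)=\infty$ follows from Lemma \ref{lemma:rosso-lemma19-gral}, \emph{not} from Lemma \ref{lem:infinite-generators-pbw} (these real roots need not have infinite height). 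Your plan, which funnels everything through infinite-height roots and Lemma \ref{lem:infinite-generators-pbw}, has no way to conclude in this case.

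There is a second gap: even when the orbit of $\gamma$ is infinite, the step ``infinitely many orbit elements lie in the root system of a single object'' needs $\X$ to be finite, which is only guaranteed when $\bq$ is of torsion class (Remark \ref{rem:nichols-diagonal-finite-gkd} \eqref{item:torsion-X-finite}). Your proposal never invokes the trichotomy torsion/generic/semigeneric, whereas the paper's proof rests on it: the generic case is settled by the classification of Remark \ref{rem:nichols-diagonal-finite-gkd} \eqref{item:rosso-aa} (a nonreal root forces non-finite Cartan type), the semigeneric case by Corollary \ref{coro:infGK-semigeneric}, and only in the torsion class---where $\X$ is finite---is the orbit/shear analysis carried out. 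The coideal-subalgebra alternative you sketch is not what the paper does here and, as you note yourself, its key step (producing a second generator interacting with the root vector of degree $\gamma$) is left open, so it does not close the gap either.
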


\begin{proof} 
If $V$ does not admit all reflections, then $\GK \toba(V)=\infty$ by Remark \ref{rem:all-reflections-gkd}. 

Now we assume that $V$ admits all reflections.
If $V$ is generic, then $V$ is of Cartan type. As $\gamma$ cannot be a real root since $q_{\gamma \, \gamma}=1$, $V$ is not of finite type. 
Then $\GK \toba(V)=\infty$ by Remark \ref{rem:nichols-diagonal-finite-gkd} \eqref{item:rosso-aa}. 

If $V$ is semigeneric, then $\GK \toba(V)=\infty$. Indeed if we suppose that $\GK \toba(V)<\infty$, then all roots $\beta$ satisfy $q_{\beta \, \beta} \neq 1$ since the root system is finite by Corollary \ref{coro:infGK-semigeneric}, and this gives a contradiction.

Finally, if $V$ is of torsion class,
then the set $\X$  is finite by Remark \ref{rem:nichols-diagonal-finite-gkd} \eqref{item:torsion-X-finite}. 
If the orbit of $\gamma$ is infinite, then $\GK \toba(V)=\infty$ since 
there are infinitely many roots $\delta$ of $V$ with $q_{\delta,\delta}=1$.
Now assume that $\gamma$ has finite orbit. 
Let $s_1,s_2$ be the simple reflections corresponding to $V$, cf. \eqref{eq:def-si}. Then there exists
$k>0$ such that $(s_1s_2)^k(\gamma)=\gamma$ and $(\cR^1\cR^2)^k(V)=V$. Thus $1$ is an eigenvalue of $(s_1s_2)^k \in \Aut(\Z^2)$. 
Since $\det(s_1s_2)=1$, the other eigenvalue is also $1$. Thus either $(s_1s_2)^k=\id$, or else $(s_1s_2)^k$ is a
shear mapping. The first case implies that the set of real roots is finite, hence the Weyl groupoid is finite by \cite{CH}, and consequently
all roots $\delta$ of $V$ are real \cite{CH}, a contradiction. 

In the second case there exist $c_i\in\Z$ such
that $(s_1s_2)^k(\alpha_i)=\alpha_i+c_i\gamma$ for $i= 1,2$. 
Hence, each $\beta_n:=(s_1s_2)^{nk}(\alpha_i)=\alpha_i+c_i n\gamma$ is a real root for $n\in\N$
and $\GK\toba(V)=\infty$ by Lemma \ref{lemma:rosso-lemma19-gral}.
\end{proof}

As a consequence of Proposition \ref{pro:rootweight1} we have:  
\begin{coro} \label{cor:crucialGK1}
Let $p\in \Bbbk ^\times $ such that $p^4\ne 1$. Assume that $q_{11}=p$ and
$\qt_{12}=q_{22}=p^4$. Then $\GK \NA(V)=\infty$.
\end{coro}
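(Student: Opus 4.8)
The plan is to funnel everything into Proposition~\ref{pro:rootweight1}. I would begin by recording the self-braiding of an arbitrary lattice vector $\gamma=a\alpha_1+b\alpha_2$:
\[
q_{\gamma,\gamma}=q_{11}^{a^2}(q_{12}q_{21})^{ab}q_{22}^{b^2}=p^{a^2+4ab+4b^2}=p^{(a+2b)^2}.
\]
Since $\qt_{12}=p^4\neq 1$, the standing hypothesis of Proposition~\ref{pro:rootweight1} is satisfied, so it will be enough to produce a positive root $\gamma$ with $q_{\gamma,\gamma}=1$, i.e.\ with $\ord(p)\mid(a+2b)^2$; the single case in which no such root can exist must be treated by hand.

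I would then split on whether $p$ is a root of unity. If $p\notin\G_\infty$, then $q_{\gamma,\gamma}=1$ would force $a+2b=0$, impossible for a nonzero positive root, so Proposition~\ref{pro:rootweight1} does not apply directly; instead I note that $V$ cannot be reflected at the vertex $1$, because the defining condition asks for some $n\in\N_0$ with $(n+1)_{p}(1-p^{\,n+4})=0$, and for $p\notin\G_\infty$ both factors are nonzero for every $n\ge 0$. Hence $V$ fails to admit all reflections and $\GK\NA(V)=\infty$ by Remark~\ref{rem:all-reflections-gkd}. (Equivalently, $\bq$ is generic and not of finite Cartan type, since $q_{11}=q^{2d_1}$, $q_{22}=q^{2d_2}$, $\qt_{12}=q^{2d_1a_{12}}$ would force $a_{12}=4>0$; this is Remark~\ref{rem:nichols-diagonal-finite-gkd}\,\eqref{item:rosso-aa}.)

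If instead $p\in\G_N'$ with $N\ge 3$ and $N\neq 4$ (the hypothesis $p^4\neq 1$ rules out $N\in\{1,2,4\}$), then $\bq$ is of Cartan type, as $\qt_{12}=p^4$ is a nonpositive power of both $q_{11}=p$ and $q_{22}=p^4$ modulo $N$. I would compute the Cartan integers of \eqref{eq:defcij}: the reflection at $2$ yields $|c_{21}|=\ord(p^4)-1$, while $|c_{12}|$ is the least $n\ge0$ with $N\mid n+4$ or $N\mid n+1$, and a short case check gives $c_{12}c_{21}\ge 4$ for all admissible $N$, so the Cartan matrix is never of finite type. If it is affine ($c_{12}c_{21}=4$), then $\GK\NA(V)=\infty$ directly by Proposition~\ref{prop:dimaffineCartan}. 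If it is indefinite ($c_{12}c_{21}>4$), then, $V$ being of Cartan type, the generalized root system contains the positive imaginary roots of the associated Kac--Moody datum; choosing an imaginary root $\gamma=a\alpha_1+b\alpha_2$ with $N\mid a+2b$ gives $q_{\gamma,\gamma}=p^{(a+2b)^2}=1$, and Proposition~\ref{pro:rootweight1} (or directly Lemma~\ref{le:dimindefCartan}) finishes.

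The main obstacle is the indefinite Cartan case. Writing $\ell$ for the linear form with $\ell(\alpha_1)=1$, $\ell(\alpha_2)=2$, one has $q_{\gamma,\gamma}=p^{\ell(\gamma)^2}$, and the Cartan identity $p^{2\ell_i\ell_j}=p^{a_{ij}\ell_i^2}$ shows $\ell(w\gamma)^2\equiv\ell(\gamma)^2\pmod N$ for every $w$ in the Weyl group; thus every \emph{real} root has self-braiding $q_{\alpha_i,\alpha_i}\in\{p,p^4\}\neq 1$, and the trivial-self-braiding root we need is necessarily \emph{imaginary}. A lattice vector in the open two-dimensional imaginary cone lying in the class $a+2b\equiv 0\pmod N$ certainly exists, so the delicate point is purely structural: to certify that such a vector is an honest root of $\NA(V)$, i.e.\ that specializing the parameter to a root of unity does not delete imaginary roots from the Kharchenko PBW datum but only renders their root vectors of infinite height (which is precisely what forces $\GK$ to be infinite). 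Granting this, the affine and generic branches are routine and the corollary follows.
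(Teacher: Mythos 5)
Your generic branch ($p\notin\G_\infty$: no reflection at vertex $1$, or directly Lemma~\ref{lemmata:rosso}\,\ref{item:diagonal-infiniteGK}) and your affine branch agree with the paper and are sound. But the gap you flag at the end is not a removable technicality: it is the entire content of the indefinite case. Before Proposition~\ref{pro:rootweight1} (or Lemma~\ref{le:dimindefCartan}, which \emph{assumes} such a root rather than producing one) can be invoked, you need a root of the Nichols algebra, i.e.\ an element of $\roots^V_+$, with trivial self-braiding. Your plan is to take a lattice point of the Kac--Moody imaginary cone in the congruence class $a+2b\equiv 0 \pmod{\ord p}$ and to ``grant'' that specialization at a root of unity does not delete it from the Kharchenko PBW datum. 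Nothing in the paper or in the cited literature gives this: the roots of $\NA(V)$ are by definition the degrees of PBW generators, they need not coincide with the Kac--Moody roots when $p\in\G_\infty$, and certifying that a given degree is a root is exactly the kind of statement that costs real work in this paper (Lemma~\ref{le:yNneq0isroot}, Proposition~\ref{prop:dt-yn}, and the long case checks inside Lemma~\ref{le:aij-ge-3} exist precisely for this purpose).

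The paper closes this gap by never leaving the explicitly controllable part of the root system: $m\alpha_1+\alpha_2$ is a root iff $y_m\neq 0$ iff $0\le m\le -a_{12}$, and symmetrically for $\alpha_1+k\alpha_2$. If $\ord p=4N$, the root $\gamma=\beta_{2N-2}$ has $q_{\gamma\,\gamma}=p^{4N^2}=1$; if $\ord p=2N+1$, the root $\gamma=\alpha_1+N\alpha_2$ (available because $-a_{21}=2N$) has $q_{\gamma\,\gamma}=p^{(2N+1)^2}=1$. If $\ord p=4N+2$ this device can fail outright --- already for $\ord p=6$ no root of either one-parameter family has self-braiding $1$ --- a phenomenon your congruence analysis does not detect; here the paper takes $\beta_{2N-1}$, whose self-braiding is $p_{2N-1}=-1$, and applies Lemma~\ref{le:doubleroot} to certify that $2\beta_{2N-1}$ is an honest root with self-braiding $(-1)^4=1$. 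So the missing ingredient in your write-up is supplied by the pair (roots with a coefficient equal to $1$, certified via non-vanishing of $y_m$) together with the doubling Lemma~\ref{le:doubleroot}; no appeal to imaginary Kac--Moody roots is needed, and without a proof of your structural claim the argument does not go through.
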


\begin{proof}
If $p$ is a root of $1$ and $p^4\ne 1$, then $V$ is of Cartan type. Let $A=(a_{ij})_{i,j\in\I_2}$ be the Cartan matrix of $V$. 
Recall that $m\alpha_1+\alpha_2$ is a root if and only if $0\le m\le -a_{12}$.
We study three cases according with the order of $p$:

\begin{itemize}[leftmargin=*]
\item If $p \in \G_{4N}'$, $N\ge 2$, then $-a_{12}=4N-4$. Thus $\gamma=\beta_{2N-2}$ is a root. As
\begin{align*}
q_{\gamma \, \gamma}&=q_{11}^{(2N-2)^2}\qt_{12}^{\, \, 2N-2}q_{22}
= p^{4N^2}=1,
\end{align*}
$\GK \NA(V)=\infty$ by Proposition \ref{pro:rootweight1}.

\item If $p \in \G_{4N+2}'$, $N\ge 1$, then $-a_{12}=4N-2$.
In particular, $\gamma=\beta_{2N-1}$ is a root.
Now $2\beta_{2N-1}$ is a root by Lemma~\ref{le:doubleroot}, since 
$y_{2N}\neq 0$ and
\begin{align*}
p_{2N-1}=q_{\gamma \, \gamma}&=q_{11}^{(2N-1)^2}\qt_{12}^{\, \, 2N-1}q_{22}
= p^{(2N+1)^2}=-1.
\end{align*}
Thus $\GK \NA(V)=\infty$ by Proposition \ref{pro:rootweight1}.

\item If $p \in \G_{2N+1}'$, $N\ge 1$, then $-a_{21}=2N$. 
Thus $\gamma=N\alpha_2+\alpha_1$ is a root, and
\begin{align*}
q_{\gamma \, \gamma}&=q_{22}^{N^2}\qt_{12}^{\, \, N}q_{11}
= p^{(2N+1)^2}=1,
\end{align*}
$\GK \NA(V)=\infty$, again by Proposition \ref{pro:rootweight1}.
\end{itemize}
\smallbreak

Finally, if $p$ is not a root of $1$, then $\NA (V)$ does not admit all reflections and hence $\GK \NA(V)=\infty$.
\end{proof}

\bigbreak We apply next Corollary \ref{cor:crucialGK1} in the braided Hopf algebra $K_{\ge m+1}/K_{>m+1}$.
Since $x_1^k\in K_{>m}$ for all $k,m\in \N $,
\eqref{eq:copr_um} implies that $y_m\in K_{\ge m}$ is a primitive element of $\N _0^2$-degree $\beta_m$ in $K_{\ge m}/K_{>m}$ for all $m\in \N $.
Then \eqref{eq:copr_um} and \eqref{eq:copr_um} leads to the following shape
of the coproduct of $w_m$ and $y_m^2$:
\begin{align} \label{eq:copr_wm_simple}
&
\begin{aligned}
\Delta (w_m)&=w_m\ot 1+1\ot w_m
\\
&+(m+2)_{q_{11}}(1-q_{11}^{m+1}\qt_{12})q_{\beta_{m+1}\,\beta_m} y_{m+1}\ot y_{m+1}
\\
&
+\text{ terms }x\otimes y, \, \deg x=k\alpha_1+l\alpha _2, \, k\ge l(m+1)+1;
\end{aligned}
\\
\label{eq:copr_um2}
&
\begin{aligned}
\Delta (y_m^2)&\in y_m^2\ot 1+(1+p_m)y_m\ot y_m
+1\ot y_m^2+B_{>m}\otimes \NA (V).
\end{aligned}
\end{align}
Assume that $p_{m+1}\neq -1$. 
By \eqref{eq:copr_wm_simple} and \eqref{eq:copr_um2}, $\wt_m$ is a primitive element of $\N _0^2$-degree $2\beta_{m+1}$
in $K_{\ge m+1}/K_{>m+1}$.

\begin{lem}\label{le:technical-0}
Let $m\in \N_0$ be such that $p_{m+1}\neq -1$. If $\wt_m\neq 0$, then $\GK \NA(V)=\infty$.
\end{lem}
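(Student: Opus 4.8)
Looking at this, I need to prove that if $\widetilde{w}_m \neq 0$ (a nonzero primitive element of degree $2\beta_{m+1}$ in $K_{\geq m+1}/K_{>m+1}$), then $\GK \NA(V) = \infty$.

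Let me understand the setup. We have $y_m \in K_{\geq m}$ which is primitive of degree $\beta_m = m\alpha_1 + \alpha_2$ in the quotient $K_{\geq m}/K_{>m}$. And $\widetilde{w}_m$ is primitive of degree $2\beta_{m+1}$ in $K_{\geq m+1}/K_{>m+1}$.

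The key strategy: $\GK$ is preserved under these reflections (Theorem \ref{th:GK-reflections}), and these braided coideal subalgebra quotients $K_{\geq r}/K_{>r}$ should have $\GK$ bounded by $\GK \NA(V)$. So if I can find inside $K_{\geq m+1}/K_{>m+1}$ a sub-braided-vector-space whose Nichols algebra has infinite GK-dim, I'm done.

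The natural candidates: $y_{m+1}$ (primitive, degree $\beta_{m+1}$) and $\widetilde{w}_m$ (primitive, degree $2\beta_{m+1}$). These two span a 2-dimensional braided vector space of diagonal type. I need to compute the braiding matrix.

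Let me compute. Set $u_1 = y_{m+1}$, $u_2 = \widetilde{w}_m$.
- Degree of $u_1$ is $\beta_{m+1}$, so self-braiding is $q_{\beta_{m+1}\beta_{m+1}} = p_{m+1}$.
- Degree of $u_2$ is $2\beta_{m+1}$, so self-braiding is $q_{2\beta_{m+1}, 2\beta_{m+1}} = p_{m+1}^4$.
- The off-diagonal: $q_{u_1 u_2} q_{u_2 u_1} = q_{\beta_{m+1}, 2\beta_{m+1}} q_{2\beta_{m+1}, \beta_{m+1}} = p_{m+1}^2 \cdot p_{m+1}^2 = p_{m+1}^4$.

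So with $p = p_{m+1}$: $q_{11} = p$, $\widetilde{q}_{12} = p^4$, $q_{22} = p^4$. This is exactly Corollary \ref{cor:crucialGK1}! And $p^4 \neq 1$ because $p_{m+1} \neq -1$ is assumed... wait, I need $p^4 \neq 1$, i.e., $p \notin \G_4$. Since $p_{m+1} \neq -1$, we have $p^2 \neq 1$ isn't quite what's assumed. Let me think — $p_{m+1} \neq -1$ rules out $p = -1$, but not $p = \pm i$ or $p = 1$.

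Actually the self-braiding $q_{22}$ of $\widetilde{w}_m$ being $\neq 1$ is needed for it to be a valid braided vector space of diagonal type (recall $q_{ii} \neq 1$ is required). So I likely need $p^4 \neq 1$, which needs checking separately, or the subspace isn't "diagonal type with $q_{ii}\neq 1$."

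Let me also double check the abstract braiding is actually diagonal — since both are homogeneous in $\Z^2$-grading and the braiding in $\ydG$ is diagonal, yes.

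The main obstacle: I need to verify that $\widetilde{w}_m$ and $y_{m+1}$ are linearly independent (so genuinely 2-dimensional) and that the Nichols subalgebra they generate embeds appropriately so that infinite GK of that Nichols algebra forces infinite GK of $\NA(V)$. Also handling the cases where $p^4 = 1$.

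Here is my proof proposal.

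=== BEGIN LATEX ===

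The plan is to exhibit inside $K_{\ge m+1}/K_{>m+1}$ a braided vector subspace of diagonal type and dimension two whose Nichols algebra has infinite $\GK$, and to transfer this to $\NA(V)$.

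First I would set $p=p_{m+1}$ and dispose of the degenerate values of $p$. If $p^2=1$ then, since $p_{m+1}\neq -1$ by hypothesis, we would have $p_{m+1}=1$, so $q_{\beta_{m+1}\,\beta_{m+1}}=1$, making $\beta_{m+1}$ a root $\gamma$ with $q_{\gamma\,\gamma}=1$; then $\GK\NA(V)=\infty$ by Proposition~\ref{pro:rootweight1} and we are done. If $p\in\G_4'$, so $p^4=1$ but $p^2=-1$, then $q_{2\beta_{m+1},\,2\beta_{m+1}}=p^4=1$, and since $\widetilde{w}_m\neq 0$ is a nonzero homogeneous element of degree $2\beta_{m+1}$ it witnesses that $2\beta_{m+1}$ is a root with trivial self-braiding, so again Proposition~\ref{pro:rootweight1} gives $\GK\NA(V)=\infty$. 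Hence I may assume $p^4\neq 1$.

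Now I would work in the braided Hopf algebra $R:=K_{\ge m+1}/K_{>m+1}$ in $\ydG$. By the discussion preceding the statement, $y_{m+1}$ is a primitive element of $\Z^2$-degree $\beta_{m+1}$ and $\widetilde{w}_m$ is a primitive element of degree $2\beta_{m+1}$ in $R$; as $p^4\neq 1$ these degrees are distinct, and since $\widetilde{w}_m\neq 0$ while $y_{m+1}\neq 0$ (both being nonzero by the standing hypotheses, $y_{m+2}\neq 0$ forcing $y_{m+1}\neq 0$), the subspace $U=\langle y_{m+1},\,\widetilde{w}_m\rangle$ is two-dimensional. Since the braiding in $\ydG$ is diagonal on $\Z^2$-homogeneous elements, $U$ is a braided vector subspace of diagonal type with braiding matrix given, in the basis $(y_{m+1},\widetilde{w}_m)$, by the bicharacter $\bq$ evaluated on $(\beta_{m+1},2\beta_{m+1})$. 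Writing this out yields $q'_{11}=q_{\beta_{m+1}\,\beta_{m+1}}=p$, $q'_{22}=q_{2\beta_{m+1},\,2\beta_{m+1}}=p^4$, and $\qt'_{12}=q_{\beta_{m+1},\,2\beta_{m+1}}\,q_{2\beta_{m+1},\,\beta_{m+1}}=p^2\cdot p^2=p^4$. As both generators are primitive, the braided subalgebra they generate is a quotient of $T(U)$ mapping onto $\NA(U)$; more precisely the canonical map $\NA(U)\to R$ identifies $\NA(U)$ with a braided Hopf subalgebra of $R$, so $\GK\NA(U)\le\GK R$.

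The matrix $(q'_{ij})$ is exactly the one in Corollary~\ref{cor:crucialGK1} with $p^4\neq 1$, whence $\GK\NA(U)=\infty$. Finally I would invoke the $\GK$ bound for these coideal quotients: $R=K_{\ge m+1}/K_{>m+1}$ is a subquotient of $\NA(V)$ built from a graded right coideal subalgebra, so $\GK R\le\GK\NA(V)$, and therefore $\GK\NA(V)\ge\GK R\ge\GK\NA(U)=\infty$. The step I expect to be the main obstacle is the transfer of infinite $\GK$ along the inclusion $\NA(U)\hookrightarrow R$ and the bound $\GK R\le\GK\NA(V)$: one must check that $\NA(U)$ genuinely sits inside $R$ as the Nichols algebra of $U$ (using that $y_{m+1},\widetilde{w}_m$ are primitive and that $R$ is graded connected, so the inclusion of the subalgebra they generate factors through $\NA(U)$), and that passing from $\NA(V)$ to the associated graded right coideal quotient $K_{\ge m+1}/K_{>m+1}$ does not increase $\GK$.

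=== END LATEX ===
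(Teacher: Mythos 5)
Your proposal is correct and essentially reproduces the paper's own proof: the paper likewise takes the two-dimensional braided subspace $W$ spanned by $y_{m+1}$ and $\wt_m$ (with braiding $q'_{11}=p$, $q'_{12}q'_{21}=q'_{22}=p^4$ for $p=p_{m+1}$), applies Corollary~\ref{cor:crucialGK1} when $p^4\neq 1$, transfers infinite $\GK$ through the chain of subquotients $\NA(W)$ -- pre-Nichols subalgebra -- $K_{\ge m+1}/K_{>m+1}$ -- $\NA(V)$, and for $p^4=1$ invokes Proposition~\ref{pro:rootweight1} via the root $2\beta_{m+1}$ with trivial self-braiding, just as you do (your split into $p_{m+1}=1$ and $p_{m+1}\in\G_4'$ is a harmless refinement). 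The only caveat is that your inference that a nonzero homogeneous element of degree $2\beta_{m+1}$ by itself ``witnesses'' that $2\beta_{m+1}$ is a root is not valid in general, but the paper asserts the same root property with no further justification, so your argument matches the paper's level of detail on that point.
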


\pf The subalgebra of $K_{\ge m+1}/K_{>m+1}$ generated by $y_{m+1}$ and $\wt_m$ is
a pre-Nichols algebra of diagonal type.
Let $W$ be the $\Bbbk $-span of $y_{m+1}$ and $\wt_m$, $p=q_{\beta_{m+1} \, \beta_{m+1}}$.
The braiding matrix of $W$ is
$$ \begin{pmatrix} p & p^2 \\ p^2 & p^4 \end{pmatrix} $$

If $p^4=1$ then $2\beta_{m+1}$ is a root of $V$ of infinite height. 
Thus $\GK \NA(V)=\infty$ by Proposition~\ref{pro:rootweight1}.

If $p^4\neq 1$, then $W$ satisfies the assumptions of 
Corollary~\ref{cor:crucialGK1} and hence $\GK\NA (W)=\infty$.
Since $\NA (W)$ is a subquotient of $K_{\ge m+1}/K_{>m+1}$, 
and this is a subquotient of $\NA(V)$, we have that $\GK \NA(V)=\infty$.
\epf

We now apply Lemma \ref{le:technical-0} combined with the Lemmas in \S \ref{subsection:rank-2}.

\begin{lem}\label{le:technical-1}
Assume that 
\begin{align*}
\qt_{12} &\ne 1, & (2)_{q_{11}}(1-q_{11}\qt_{12}) & \ne 0, & 
(2)_{q_{22}}(1-\qt_{12}q_{22}) & \ne 0, &
q_{11}\qt_{12}^{\, 2} q_{22} & \ne -1.
\end{align*}
Then $\GK \NA(V)=\infty$.
\end{lem}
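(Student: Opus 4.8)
The plan is to reduce the statement to Lemma~\ref{le:technical-0}, which guarantees $\GK \NA(V)=\infty$ as soon as $\wt_m\neq 0$ for some $m$ with $p_{m+1}\neq -1$. I would take $m=0$, so the quantities that matter are $p_1=q_{11}\qt_{12}q_{22}$ and $\wt_0$, whose vanishing is controlled by Lemma~\ref{lem:conditions-w0=0}. The four hypotheses are tailored precisely to force the nonvanishing $\wt_0\neq 0$ (together with an auxiliary input $y_2\neq 0$), except in one degenerate situation where $p_1=-1$, which must be treated separately.

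First I would record that $y_2\neq 0$. By Lemma~\ref{lemmata:rosso}\ref{item:diagonal-relations}, $y_2=0$ if and only if $(2)_{q_{11}}^!\mu_2=(2)_{q_{11}}(1-\qt_{12})(1-q_{11}\qt_{12})=0$. The first hypothesis gives $1-\qt_{12}\neq 0$ and the second gives $(2)_{q_{11}}(1-q_{11}\qt_{12})\neq 0$, so indeed $y_2\neq 0$. Next I split the argument according to whether $p_1=-1$. If $p_1=-1$, then since $y_2\neq 0$, Lemma~\ref{le:doubleroot} applied with $m=1$ shows that $2\beta_1$ is a root of $V$ with $q_{2\beta_1\,2\beta_1}=1$; as $\qt_{12}\neq 1$ by the first hypothesis, Proposition~\ref{pro:rootweight1} yields $\GK \NA(V)=\infty$. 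If instead $p_1\neq -1$, then the hypotheses $y_2\neq 0$ and $p_1\neq -1$ of Lemma~\ref{lem:conditions-w0=0} both hold, so $\wt_0=0$ if and only if $(\qt_{12}q_{22}-1)(q_{22}+1)(q_{11}\qt_{12}^{\,2}q_{22}+1)=0$. The third hypothesis eliminates the first two factors, since it gives $\qt_{12}q_{22}\neq 1$ and $q_{22}\neq -1$, while the fourth hypothesis eliminates the last factor; hence $\wt_0\neq 0$, and Lemma~\ref{le:technical-0} with $m=0$ gives $\GK \NA(V)=\infty$.

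The delicate point is the case $p_1=-1$: there Lemma~\ref{lem:conditions-w0=0} and Lemma~\ref{le:technical-0} are both unavailable, as each of them requires $p_1\neq -1$. The workaround is to notice that $p_1=-1$ is exactly the condition producing, via Lemma~\ref{le:doubleroot}, a root $2\beta_1$ of trivial self-braiding, so that the conclusion follows from the general weight-one criterion in Proposition~\ref{pro:rootweight1} rather than from the $\wt_0$-machinery. Once this case distinction is in place, the remainder is a direct bookkeeping verification that the four stated inequalities neutralize exactly the obstructions appearing in the quoted lemmas, with no further computation needed.
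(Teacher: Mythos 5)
Your proof is correct and follows essentially the same route as the paper: deduce $y_2\neq 0$ from Lemma \ref{lemmata:rosso}, handle $p_1=-1$ via Lemma \ref{le:doubleroot} and Proposition \ref{pro:rootweight1}, and for $p_1\neq -1$ conclude $\wt_0\neq 0$ from Lemma \ref{lem:conditions-w0=0} and apply Lemma \ref{le:technical-0} with $m=0$. The only difference is cosmetic: the paper re-derives the primitivity of $\wt_0$ in $K_{\ge 1}/K_{>1}$ inside its proof, whereas you rely on Lemma \ref{le:technical-0} having already packaged that step, which is legitimate.
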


\begin{proof}
By Lemma \ref{lemmata:rosso}, $y_2\ne 0$. If $p_1=-1$,
then $2\beta_1$ is a root of $V$ by Lemma~\ref{le:doubleroot}, so
$\GK \NA(V)=\infty$ by Proposition~\ref{pro:rootweight1}.

Assume now that $p_1\ne -1$. Then
$$ \Delta (y_1^2)=y_1^2\ot 1+1\ot y_1^2+(1+p_1)y_1\ot y_1
$$
in $K_{\ge 1}/K_{>1}$ and hence $\wt_0$ is primitive in $K_{\ge 1}/K_{>1}$. By Lemma \ref{lem:conditions-w0=0} $\wt_0$ is
non-zero. Now we apply Lemma \ref{le:technical-0}.
\end{proof}

Without loss of generality, we  assume that $|a^V_{12}|\ge |a^V_{21}|>0$. We find a bound for $a^V_{12}$, $a^V_{21}$ to reduce the possibilities.

\begin{lem}\label{le:aij-ge-3}
Let $V$ be a braided vector space of diagonal type and dimension 2 such that $a_{12},a_{21}\le -3$. Then $\GK \toba(V)=\infty$.
\end{lem}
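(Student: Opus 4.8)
The plan is to read off from the hypothesis the nonvanishing conditions that feed the ready-made criterion of Lemma~\ref{le:technical-1}, and then to treat the single residual case by the $\wt_m$-machinery of \S\ref{subsection:rank-2}. First I would unpack $a_{12}^V, a_{21}^V \le -3$ through \eqref{eq:defcij}: the minima defining $c_{12}=a_{12}^V$ and $c_{21}=a_{21}^V$ are both $\ge 3$, so $(n+1)_{q_{11}}(1-q_{11}^n\qt_{12})\ne 0$ for $n=0,1,2$, and likewise with $q_{22}$ in place of $q_{11}$. Factor by factor this gives $\qt_{12}\ne 1$, $(2)_{q_{11}},(3)_{q_{11}},(2)_{q_{22}},(3)_{q_{22}}\ne 0$ and $q_{ii}^k\qt_{12}\ne 1$ for $i\in\I_2$, $k=0,1,2$; in particular $q_{11},q_{22}\notin\G_2\cup\G_3$ and, by Lemma~\ref{lemmata:rosso}, $y_2\ne 0$ and $y_3\ne 0$. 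These are precisely the first three hypotheses of Lemma~\ref{le:technical-1}, so if moreover $q_{11}\qt_{12}^{\,2}q_{22}\ne -1$, that lemma applies directly and $\GK\toba(V)=\infty$.

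It remains to analyse $q_{11}\qt_{12}^{\,2}q_{22}=-1$, and here I would first dispatch the generic and semigeneric braidings cheaply. If $\bq$ is generic, then by Remark~\ref{rem:nichols-diagonal-finite-gkd} \eqref{item:rosso-aa} finite $\GK$ would force a finite Cartan type whose entries are exactly $a_{12}^V$ and $a_{21}^V$; since no rank-two Cartan matrix of finite type has both entries $\le -3$, this contradicts the hypothesis, so $\GK\toba(V)=\infty$. If $\bq$ is semigeneric, then by Corollary~\ref{coro:infGK-semigeneric} finite $\GK$ would force a finite root system, hence one of the three diagrams exhibited in its proof, each of which has a reflection entry of absolute value $<3$; again incompatible with the hypothesis. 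Thus I may assume $\bq$ is of torsion class, so $q_{11},q_{22},\qt_{12}\in\G_\infty$.

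In the torsion-class case the key computation is $p_2=q_{\beta_2\beta_2}=q_{11}^4\qt_{12}^{\,2}q_{22}=-q_{11}^3$, which is $\ne -1$ since $q_{11}\notin\G_3$ and $q_{11}\ne 1$. Hence, as recorded just before Lemma~\ref{le:technical-0}, $\wt_1$ is a primitive element of $\N_0^2$-degree $2\beta_2$ in $K_{\ge 2}/K_{>2}$. If $\wt_1\ne 0$, then Lemma~\ref{le:technical-0} gives $\GK\toba(V)=\infty$. If $\wt_1=0$, then Lemma~\ref{lem:conditions-w1=0} \ref{item:conditions-w1=0-3} forces $(1-q_{11}^3\qt_{12})(q_{11}^2+1)=0$, i.e.\ $q_{11}^3\qt_{12}=1$ or $q_{11}\in\G_4'$. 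Running the mirror argument at the vertex $2$ — legitimate because the standing conditions and the relation $q_{11}\qt_{12}^{\,2}q_{22}=-1$ are symmetric under $1\leftrightarrow 2$ — likewise yields $q_{22}^3\qt_{12}=1$ or $q_{22}\in\G_4'$. Together with $q_{11}\qt_{12}^{\,2}q_{22}=-1$, these four combinations pin $q_{11},q_{22},\qt_{12}$ down to finitely many explicit families of roots of unity.

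The main obstacle is this residual finite analysis. For each surviving family I would produce a root $\gamma$ of $V$ with $q_{\gamma\gamma}=1$ and conclude by Proposition~\ref{pro:rootweight1}; the natural candidate is a power $N\beta_2$. Since $p_2=-q_{11}^3$ is a root of unity $\ne -1$, say of order $N$, Lemma~\ref{le:yNneq0isroot} shows that if $y_2^N\ne 0$ then $N\beta_2$ is a root, and $q_{N\beta_2\,N\beta_2}=p_2^{N^2}=1$ as required; while if $y_2^N=0$ then Proposition~\ref{prop:dt-yn} yields a vanishing $d_t$ from \eqref{eq:dt}, an explicit equation in $q_{11},\qt_{12}$ that I expect to collide either with $\wt_1=0$ or with the standing nonvanishing conditions (degenerate subcases such as $p_2=1$ I would check likewise force $q_{22}\in\G_3$, contradicting $(3)_{q_{22}}\ne 0$). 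I anticipate the bookkeeping over these finitely many $(q_{11},q_{22},\qt_{12})$ to be the most laborious part, but routine once the reductions above have narrowed the possibilities.
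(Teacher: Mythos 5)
Your reductions track the paper's own proof almost step for step: finite $\GK$ forces $q_{11}\qt_{12}^{\,2}q_{22}=-1$ (you via Lemma \ref{le:technical-1}, the paper via $\wt_0=0$ and Lemma \ref{lem:conditions-w0=0} -- the same mechanism), then $\wt_1=0$ and Lemma \ref{lem:conditions-w1=0} \ref{item:conditions-w1=0-3} together with its mirror at the vertex $2$ give the four combinations ($q_{ii}^3\qt_{12}=1$ or $q_{ii}\in\G_4'$), which pin the braiding down to finitely many root-of-unity families. Two small remarks on this part: the generic/semigeneric detour is superfluous, since the equations $q_{11}\qt_{12}^{\,2}q_{22}=-1$, $q_{ii}^3\qt_{12}=1$ or $q_{ii}^2=-1$ already force all parameters to be roots of unity (e.g.\ $q_{11}^3\qt_{12}=1=q_{22}^3\qt_{12}$ plus the $-1$ relation gives $q_{22}^{12}=-1$); and your dismissal of the semigeneric case is loose as stated, because Corollary \ref{coro:infGK-semigeneric} identifies a \emph{reflection} of $V$ with one of the three listed diagrams, whereas your hypothesis constrains the Cartan entries of $V$ itself, so one must inspect the whole Weyl-equivalence class, not just the three diagrams.

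The genuine gap is the last step. Where you write that a vanishing $d_t$ is ``an explicit equation \dots{} that I expect to collide either with $\wt_1=0$ or with the standing nonvanishing conditions'' and that the bookkeeping is ``routine'', the paper does exactly this work, and it is the bulk of its proof: it first excludes $q_{11},q_{22}\in\G_4'$ simultaneously, reduces to $q_{22}^3\qt_{12}=1$, $q_{11}=-q_{22}^5$, and lands in three families $q_{22}\in\G_8'$, $\G_{24}'$, $\G_{20}'$; in each one it computes $d_t$ (with $n=1$, $q=p_1$, not $n=2$ as you propose) and checks \emph{term by term} that $d_t\neq 0$ for all $1\le t\le N-2$ (up to $18$ separate verifications when $N=20$), so that $y_1^N\neq 0$ by Proposition \ref{prop:dt-yn}, $N\beta_1$ is a root by Lemma \ref{le:yNneq0isroot}, and $\GK\toba(V)=\infty$ by Proposition \ref{pro:rootweight1}. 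Nothing formal guarantees the ``collision'' you anticipate: the vanishing of some $d_t$ is a genuine polynomial identity in a specific root of unity and must be refuted value by value; had one $d_t$ vanished, your argument (and the paper's) would need a different root candidate. So your proposal has the correct architecture but omits precisely the computation that constitutes the proof; in its present form it establishes only the reduction to finitely many explicit families, not the conclusion.
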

\pf
Suppose that $\GK \toba(V)<\infty$. Then $\wt_0=0$, so by Lemma \ref{lem:conditions-w0=0} we have that $q_{11}\qt_{12}^{\, 2}q_{22}=-1$. As $y_3\neq 0$, Lemma \ref{lem:conditions-w1=0}\ref{item:conditions-w1=0-3} implies that $(1-q_{11}^3\qt_{12})(1+q_{11}^2)=0$, so $a_{12}=-3$ and analogously $a_{21}=-3$. We exclude the case $q_{11}^2=q_{22}^2=-1$: if this happens, then $1= q_{11}^2\qt_{12}^{\, 4}q_{22}^2= \qt_{12}^{\, 4}$, so $\qt_{12}\in\G_4$, but this gives a contradiction since 
$a_{12}^V=a_{21}^V=-3$.
Hence we may assume $q_{22}^3\qt_{12}=1$. 
Let $r=q_{22}$, so $\qt_{12}=r^{-3}$. As $-1=q_{11}\qt_{12}^{\, 2}q_{22}=r^{-5}q_{11}$, we have that $q_{11}=-r^5$.

First we assume that $q_{11}^3\qt_{12}=1$. Hence $q_{11}^3=r^3$ so either $q_{11}=r\in\G_8'$ or $q_{11}=r^{17}\in\G_{24}'$.
We compute $d_t$ as in \eqref{eq:dt} when $n=1$, for each case:
\begin{itemize}[leftmargin=*]
	\item If $q_{11}=r\in\G_8'$, then $p_1=r^7$, so $N=8$ and
	\begin{align*}
	d_t &= 1-r^{7t+6} +\frac{r^{7t}(1-r^{-2})(1+r)}{(t)_{r^7}}
	\\
	&= 1-r^{7t+6} +\frac{r^{7t-3}(r^2-1)^2}{1-r^{7t}}
	= 1-r^{7t+6} +\frac{2r^{7t+3}}{1-r^{7t}}.
	\end{align*}
	Hence $d_t=0 \iff (1-r^{7t+6}) (1-r^{7t})= 2r^{7t+7}$.
	Now we check the validity of this equation for $1\le t\le N-2=6$:
	\begin{align*}
	t&=1: & (1-r^{5}) (1-r^{7}) &\neq 2r^{6};
	\\
	t&=2: & (1-r^{4}) (1-r^{6})=2(1-r^6) &\neq 2r^{5};
	\\
	t&=3: & (1-r^{3}) (1-r^{5}) &\neq  2r^{4};
	\\
	t&=4: & (1-r^{2}) (1-r^{4}) =2(1-r^2) &\neq 2r^{3};
	\\
	t&=5: & (1-r) (1-r^{3}) & \neq  2r^{2};
	\\
	t&=6: & (1-1) (1-r^{2})=0 &\neq 2r.
	\end{align*}
	Thus $d_t\neq 0$ for all $1\le t\le 6$ so $y_1^8\neq 0$ by Proposition \ref{prop:dt-yn}, and hence $8\beta_1$ is a root by Lemma \ref{le:yNneq0isroot}. This implies that $\GK \toba(V)=\infty$ by Proposition \ref{pro:rootweight1}.
	\medspace
	
	\item If $q_{11}=r^{17}\in\G_{24}'$, then $p_1=r^{15}$, so $N=8$ and
	\begin{align*}
	d_t=1-r^{15t-2}+\frac{r^{15t}(1+r^2)(1+r^{17})(1+r^3)}{1-r^{15t}}.
	\end{align*}
	Hence $d_t=0 \iff (1-r^{15t-2}) (1-r^{15t})= -r^{15t}(1+r^2)(1+r^{17})(1+r^3)$.
	Now we check the validity of this equation for $1\le t\le N-2=6$:
	\begin{align*}
	t&=1: & (1-r^{13}) (1-r^{15}) & \neq r^{3}(1+r^2)(1+r^{17})(1+r^3) ;
	\\
	t&=2: & (1-r^{4}) (1-r^{6}) & \neq r^{18}(1+r^2)(1+r^{17})(1+r^3);
	\\
	t&=3: & (1-r^{19}) (1-r^{21}) & \neq r^{9}(1+r^2)(1+r^{17})(1+r^3) ;
	\\
	t&=4: & 2(1-r^{10}) & \neq (1+r^2)(1+r^{17})(1+r^3) ;
	\\
	t&=5: & (1-r) (1-r^{3}) & \neq r^{15}(1+r^2)(1+r^{17})(1+r^3) ;
	\\
	t&=6: & (1-r^{16}) (1-r^{18}) & \neq r^{6}(1+r^2)(1+r^{17})(1+r^3) .
	\end{align*}
	Thus $d_t\neq 0$ for all $1\le t\le 6$ so $y_1^8\neq 0$ by Proposition \ref{prop:dt-yn}, and hence $8\beta_1$ is a root by Lemma \ref{le:yNneq0isroot}. This implies that $\GK \toba(V)=\infty$ by Proposition \ref{pro:rootweight1}.
\end{itemize}

The last case is $q_{11}\in \G_4'$. Thus $r^5=-q_{11}$. As $a_{12}^V=-3$ we have  that $r\in\G_{20}'$.
Again we compute $d_t$ for $n=1$. Here,
$p_1=r^{13}$, so $N=20$ and
\begin{align*}
d_t & = 1-r^{13t} +\frac{r^{13t}(1+r^{2})(1+r^{15})(1+r^{3})}{1-r^{13t}}.
\end{align*}
Hence $d_t=0 \iff (1-r^{13t})^2 = r^{13t+5}(1+r^{2})(1+r^{5})(1+r^{3})$.
Now we check the validity of this equation for $1\le t\le N-2=18$:
\begin{align*}
t&=1: & (1+r^{3})^2 &\neq r^{18}(1+r^{2})(1+r^{5})(1+r^{3}) ;
\\
t&=2: & (1-r^{6})^2 &\neq r^{11}(1+r^{2})(1+r^{5})(1+r^{3});
\\
t&=3: & (1+r^{9})^2 &\neq r^{4}(1+r^{2})(1+r^{5})(1+r^{3}) ;
\\
t&=4: & (1+r^{2})^2 &\neq r^{17}(1+r^{2})(1+r^{5})(1+r^{3}) ;
\\
t&=5: & 2r^{15} &\neq r^{10}(1+r^{2})(1+r^{5})(1+r^{3});
\\
t&=6: & (1+r^{8})^2 &\neq r^{3}(1+r^{2})(1+r^{5})(1+r^{3});
\\
t&=7: & (1+r)^2 &\neq r^{16}(1+r^{2})(1+r^{5})(1+r^{3}) ;
\\
t&=8: & (1-r^{4})^2 &\neq r^{9}(1+r^{2})(1+r^{5})(1+r^{3}) ;
\\
t&=9: & (1+r^{7})^2 &\neq r^{2}(1+r^{2})(1+r^{5})(1+r^{3}) ;
\\
t&=10: & 4 &\neq r^{15}(1+r^{2})(1+r^{5})(1+r^{3}) ;
\\
t&=11: & (1-r^{3})^2 &\neq r^{8}(1+r^{2})(1+r^{5})(1+r^{3}) ;
\\
t&=12: & (1+r^{6})^2 &\neq r(1+r^{2})(1+r^{5})(1+r^{3}) ;
\\
t&=13: & (1-r^{9})^2 &\neq r^{14}(1+r^{2})(1+r^{5})(1+r^{3}) ;
\\
t&=14: & (1-r^{2})^2 &\neq r^{7}(1+r^{2})(1+r^{5})(1+r^{3}) ;
\\
t&=15: & 2r^{5} &\neq (1+r^{2})(1+r^{5})(1+r^{3}) ;
\\
t&=16: & (1-r^{8})^2 &\neq r^{13}(1+r^{2})(1+r^{5})(1+r^{3}) ;
\\
t&=17: & (1-r)^2 &\neq r^{6}(1+r^{2})(1+r^{5})(1+r^{3}) ;
\\
t&=18: & (1+r^{4})^2 &\neq r^{19}(1+r^{2})(1+r^{5})(1+r^{3}).
\end{align*}
Thus $d_t\neq 0$ for all $1\le t\le 18$ so $y_1^{20} \neq 0$ by Proposition \ref{prop:dt-yn}, and hence $20\beta_1$ is a root by Lemma \ref{le:yNneq0isroot}. Again, $\GK \toba(V)=\infty$ by Proposition \ref{pro:rootweight1}.
\epf

We finally assume that $\NA (V)$ has finite GK-dimension.
By Remark \ref{rem:all-reflections-gkd}, $V$ admits all reflections. 
We consider all possible cases with $a_{21}\in\{-1,-2\}$, not covered by previous arguments,
and conclude that the root system is finite--i.~e. the Dynkin diagram appears in \cite[Table 1]{H-classif}. 

\subsubsection{$a^V_{12}=a^V_{21}=-1$}\label{subsubsec:a12=a21=-1}
We have that 
\begin{align*}
(q_{11}\qt_{12}-1)(2)_{q_{11}}&=0, &
(q_{22}\qt_{12}-1)(2)_{q_{22}}&=0.
\end{align*}
The four possible diagrams appear in \cite[Table 1, Rows 1 \& 2]{H-classif}.

\subsubsection{$a^V_{12}=-2$, $a^V_{21}=-1$}\label{subsubsec:a12=-2-a21=-1}
We have that 
\begin{align*}
(q_{11}^2\qt_{12}-1)(3)_{q_{11}}&=0, &
(q_{22}\qt_{12}-1)(2)_{q_{22}}&=0.
\end{align*}
If $q_{22}\qt_{12}=1$, then we have \cite[Table 1, Rows 3 \& 5]{H-classif}. 

Now we assume that $q_{22}=-1$. If $q_{11}^2\qt_{12}=1$, then we get \cite[Table 1, Row 4]{H-classif}. Let $q_{11}\in\G_3'$. For simplicity we set $q=q_{11}$, $r=\qt_{12}$. Let $(t_{ij})_{i,j\in\I}$ be the braiding matrix of $\cR^2(V)$: its Dynkin diagram is
$\xymatrix{\overset{-qr} {\circ} \ar@{-}[r]^{r^{-1}} & \overset{-1}{\circ}}$.
We study the possible values of $a:=a_{12}^{\cR^2(V)}$. Notice that $a\leq -2$.

\begin{itemize} [leftmargin=*]\renewcommand{\labelitemi}{$\circ$}
\item $a=-2$. If $1=t_{11}^2\widetilde{t}_{12}=q^2r$, then $V$ appears in \cite[Table 1, Row 4]{H-classif}. Otherwise $1=t_{11}^3=-r^3$, so  $r=-q^{\pm1}$. As $1\neq t_{11}=-qr$, we have that $r=-q$ and $V$ appears in \cite[Table 1, Row 6]{H-classif}.
\smallbreak

\item $a=-3$. Either $1=t_{11}^3\widetilde{t}_{12}=-r^2$, in which case $r\in\G_4'$ and $V$ appears in \cite[Table 1, Row 8]{H-classif}, or else 
$1=t_{11}^4=qr^4$, in which case $r\in\G_{12}'$ and $V$ appears in \cite[Table 1, Row 7]{H-classif}.
\smallbreak

\item $a\le -4$. Notice that $\wt_0=0$ by Lemma \ref{lem:conditions-w0=0}, and $\wt_1=0$ by Lemma \ref{lem:conditions-w1=0} \ref{item:conditions-w1=0-1} since $-t_{11}\widetilde{t}_{12}=q\in\G_3'$. Hence we may apply Lemma \ref{le:wm}: as $\wt_2=0$, the scalar in Remark \ref{rem:wm-bis} \ref{item:wm-bis-5} is zero. That is,
\begin{align*}
0 & = (1+q^2r^5) (1-qr^3)(1+q^2r^2)(1+q^2r^4).
\end{align*}
If $q^2r^5=-1$, then $-r\in\G_{15}'$ and $V$ belongs to \cite[Table 1, Row 15]{H-classif}. 
If $qr^3=1$, then $r\in\G_{9}'$ and $V$ is in \cite[Table 1, Row 9]{H-classif}.
If $q^2r^2=-1$, then $r\in \G'_{12}$, $q=-r^2$ and $V$ belongs to 
\cite[Table 1, Row 7]{H-classif}. Otherwise $q^2r^4=-1$, in which case $r\in \G'_{24}$ with $q=-r^4$, and $V$ is in \cite[Table 1, Row 12]{H-classif}.
\end{itemize}

\subsubsection{$a^V_{12}=-3$, $a^V_{21}=-1$ with $q_{22}=-1$}\label{subsubsec:a12=-3-q22=-1}

First we assume $q_{11}^3\qt_{12}=1$. Set $q=q_{11}$, so $\qt_{12}=q^{-3}$.
Let $(t_{ij})_{i,j\in\I}$ be the braiding matrix of $\cR^2(V)$: its Dynkin diagram is
$\xymatrix{\overset{-q^{-2}} {\circ} \ar@{-}[r]^{q^3} & \overset{-1}{\circ}}$.

\begin{itemize} [leftmargin=*]\renewcommand{\labelitemi}{$\circ$}
\item If $a_{12}^{\cR^2(V)}=-2$, $\cR^2(V)$ appears in \S \ref{subsubsec:a12=-2-a21=-1}, so $\cR^2(V)$ has finite root system and then $V$ too.

\item If $a_{12}^{\cR^2(V)}=-3$, then then either $t_{11}\in\G_4'$, in which case $q\in\G_8'$ and $V$ belongs to \cite[Table 1, Row 11]{H-classif}; or else $t_{11}^3\widetilde{t}_{12}=1$, in which case $q\in\G_6'$ and $V$ is of Cartan type $G_2$ \cite[Table 1, Row 10]{H-classif}.

\item If $a_{12}^{\cR^2(V)}\le-4$, then Remark \ref{rem:wm-bis} \ref{item:wm-bis-5} says that $(1+q^7)(1-q^5)(5)_{-q^{-2}}=0$.
If $q^7=-1$, then $V$ belongs to \cite[Table 1, Row 16]{H-classif}.
If $q^5=1$, then $V$ belongs to \cite[Table 1, Row 13]{H-classif}.
If $-q^2\in\G_5'$, then $V$ belongs to \cite[Table 1, Row 14]{H-classif}.
\end{itemize}

Finally, if $q_{11}^3\qt_{12}\neq 1$, then $q_{11}=\eta\in\G_4'$. Set $q=\qt_{12}$. By Lemma \ref{lem:conditions-w1=0}\ref{item:conditions-w1=0-1},

\begin{itemize} [leftmargin=*]\renewcommand{\labelitemi}{$\circ$}
\item either $\eta^3q^2=-1$, so $q^2=-\eta$ and $V$ belongs to \cite[Table 1, Row 11]{H-classif};

\item or else $-\eta q\in\G_3'$, in which case $a_{12}^{\cR^2(V)}=-2$, $a_{21}^{\cR^2(V)}=-1$ since the diagram of $\cR^2(V)$ is $\xymatrix{\overset{-\eta q} {\circ} \ar@{-}[r]^{q^{-1}} & \overset{-1}{\circ}}$. Hence $\cR^2(V)$ appears in \S \ref{subsubsec:a12=-2-a21=-1} and thus $\cR^2(V)$ has finite root system.
\end{itemize}

\subsubsection{$a^V_{12}=-3$, $a^V_{21}=-1$ with $q_{22}\qt_{12}=1$}\label{subsubsec:a12=-3-q22neq-1}

If $q_{11}^3\qt_{12}=1$, then $V$ is of Cartan type $G_2$ and the root system is finite \cite[Table 1, Row 10]{H-classif}. Otherwise, $q_{11}=\eta\in\G_4'$. For simplicity we call $q=q_{22}$ so $\qt_{12}=q^{-1}$.
Let $(t_{ij})_{i,j\in\I}$ be the braiding matrix of $\cR^1(V)$: its Dynkin diagram is
$\xymatrix{\overset{\eta} {\circ} \ar@{-}[r]^{-q} & \overset{q^{-2}\eta}{\circ}}$. By Lemma \ref{le:aij-ge-3}, $a_{21}^{\cR^1(V)}\ge -2$.

\begin{itemize} [leftmargin=*]\renewcommand{\labelitemi}{$\circ$}
\item If $a_{21}^{\cR^1(V)}=-1$, then either $t_{22}=-1$, in which case $q^2=-\eta$ and $V$ belongs to \cite[Table 1, Row 11]{H-classif}; or else $t_{22}\widetilde{t}_{12}=1$, in which case $q=-\eta$ and $V$ is of Cartan type $G_2$.

\item If $a_{21}^{\cR^1(V)}=-2$, then either $1=t_{22}^2\widetilde{t}_{12}$, in which case $q^3=1$,
or else $1=t_{22}^3$, which implies $q^6=-\eta$. For the first case, we compute $p_1=q^{-1}$,
which has order $N=3$, and by \eqref{eq:dt}, $d_1=-\eta q\neq 0$. Hence $y_1^3\neq 0$ by
Proposition~\ref{prop:dt-yn}. By Lemma \ref{le:yNneq0isroot}, $3\beta_1$ is a root of $\cR^1(V)$,
thus $\GK \toba(\cR^1(V))=\infty$ by Proposition \ref{pro:rootweight1}. For the second case,
$q\in\G_{24}'$ and $V$ belongs to \cite[Table 1, Row 12]{H-classif}.
\end{itemize}

\subsubsection{$a^V_{12}\le -4$, $a^V_{21}=-1$ with $q_{22}=-1$}\label{subsubsec:a12le-4-q22=-1}
For simplicity we set $q=q_{11}$, $r=\qt_{12}$. Let $(t_{ij})_{i,j\in\I}$ be the braiding matrix of $\cR^2(V)$: its Dynkin diagram is
$\xymatrix{\overset{-qr} {\circ} \ar@{-}[r]^{r^{-1}} & \overset{-1}{\circ}}$.
As $q^3r\neq 1$, Lemma \ref{lem:conditions-w1=0} \ref{item:conditions-w1=0-1} says that either $q^3r^2=-1$ or else $-qr\in\G_3'$.
\begin{itemize} [leftmargin=*]\renewcommand{\labelitemi}{$\circ$}
\item If $q^3r^2=-1$, then $t_{11}^3 \widetilde{t}_{12}=-q^3r^2=1$ so $a_{12}^{\cR^2(V)}= -3$. Hence $\cR^2(V)$ has a finite root system by \S \ref{subsubsec:a12=-3-q22=-1}, and $V$ too.

\item If $-qr\in\G_3'$, then $a_{12}^{\cR^2(V)} = -2$. Hence $\cR^2(V)$ has a finite root system by \S \ref{subsubsec:a12=-2-a21=-1}, and $V$ too.
\end{itemize}

\subsubsection{$a^V_{12}\le -4$, $a^V_{21}=-1$ with $q_{22}\qt_{12}=1$}\label{subsubsec:a12le-4-q22neq-1}
For simplicity set $q=q_{11}$, $r=q_{22}$, so $\qt_{12}=r^{-1}$. By Lemma \ref{lem:conditions-w1=0} \ref{item:conditions-w1=0-2}, $t:=-q^2r^{-1}\in\G_3'$ since $a_{12}^V\le -4$. As $y_4\ne0$ we may apply Lemma \ref{le:wm} and Remark \ref{rem:wm-bis} \ref{item:wm-bis-7}:
\begin{align*}
0=(1-q^4r^{-1})(1+q^4r^{-1})(1-q^5r^{-1})(3)_{-q}(5)_q.
\end{align*}
\begin{itemize} [leftmargin=*]\renewcommand{\labelitemi}{$\circ$}
\item If $q^4r^{-1}=1$, then $V$ is of affine Cartan type and $\GK \toba(V)=\infty$ by Proposition \ref{prop:dimaffineCartan}, a contradiction.

\item If $q^4r^{-1}=-1$, then $q^2 =(-q^4r^{-1})(-q^2r^{-1})^{-1}=t^2$, so $q=\pm t$, $r=-t$. If $q=t$, then $a_{12}^V\ge -2$; otherwise $q=-t$ and $q_{11}\qt_{12}=qr^{-1}=1$ so $a_{12}^V= -1$. In any case we obtain a contradiction with $a_{12}^V\le -4$.

\item If $q^5r^{-1}=1$, then $q^3=-t^{-1}\in\G_6'$. Hence $-q\in\G_9'$ and $p_3=q^9r^{-3}r=q^{-1}\in\G_{18}'$. Now we compute $d_l$ as in \eqref{eq:dt} for $n=3$:
\begin{align*}
d_l &= 1-q^{-l-1}q^6r^{-1}+\frac{q^{-l}(1-q^3r^{-1})(4)_{q}}{(l)_{q^{-1}}}
= 1-q^{-l}+\frac{q^{-l}(1-q^{-2})(4)_{q}}{(l)_{q^{-1}}}
\end{align*}
Thus $d_l=0$ if and only if $q^{l+3}(1-q^{-l})^{2}=(q^{2}-1)(q^{4}-1)$, but this equality does not hold for $1\le l\le 16$. Hence $18\beta_3$ is a root of $V$ by Proposition \ref{prop:dt-yn} and Lemma \ref{le:yNneq0isroot}, so $\GK \toba(V)=\infty$ by Proposition \ref{pro:rootweight1}.

\item If $-q\in\G_3'$, then either $t=-q$ or else $t=-q^{-1}$.
Both are not possible since $a^V_{12} \le -4$.

\item If $q\in\G_5'$, then $r=-t^{-1}q^2\in\G_{30}'$. This is the case in \cite[Table 1, Row 15]{H-classif}.
\end{itemize}

\subsubsection{$a^V_{12}=a^V_{21}=-2$}\label{subsubsec:a12=a21=-2}
We have that $(q_{11}\qt_{12}-1)(2)_{q_{11}}(q_{22}\qt_{12}-1)(2)_{q_{22}} \neq 0$,
\begin{align*}
(q_{11}^2\qt_{12}-1)(3)_{q_{11}}&=0, &
(q_{22}^2\qt_{12}-1)(3)_{q_{22}}&=0.
\end{align*}
If $q_{11}^2\qt_{12}=1=q_{22}^2\qt_{12}$, then $V$ is of affine Cartan type, a contradiction with Proposition \ref{prop:dimaffineCartan}. Hence we may assume that $q_{11}\in\G_3'$. By Lemma \ref{lem:conditions-w0=0}, $q_{11}\qt_{12}^{\, 2}q_{22}=-1$.
Let $(t_{ij})_{i,j\in\I}$ be the braiding matrix of $\cR^1(V)$:
\begin{align*}
t_{22}=q_{22}\qt_{12}^{\, 2} q_{11}^4 = q_{22}\qt_{12}^{\, 2} q_{11} =-1,
\end{align*}
so $\cR^1(V)$ appears in \S \ref{subsubsec:a12=-2-a21=-1} since $\GK \toba(\cR^1(V))<\infty$. Thus $\cR^1(V)$ has a finite root system, and $V$ too.

\subsubsection{$a^V_{12}\le-3$, $a^V_{21}=-2$}\label{subsubsec:a12le-3-a21=-2}

As $\wt_0=0$, $q_{11}\qt_{12}^{\, 2}q_{22}=-1$ by Lemma \ref{lem:conditions-w0=0}. As $\wt_1=0$, either $q_{11}^3\qt_{12}=1$ or else $q_{11}^2=-1$ by Lemma \ref{lem:conditions-w1=0} \ref{item:conditions-w1=0-3}. Hence $a_{12}=-3$. We analyze the possible 4 cases.

If $q_{11}^3\qt_{12}=1=q_{22}^2\qt_{12}$, then $V$ is of indefinite Cartan type and
\begin{align*}
-q_{11}^2q_{22}=(q_{11}\qt_{12}^{\, 2}q_{22})q_{11}^2q_{22} = q_{11}^3\qt_{12}q_{22}^2\qt_{12}=1,
\end{align*}
thus $q_{22}=-q_{11}^{-2}$, and $1=q_{22}^{2}\qt_{12}=q_{11}^{-7}$, so $q_{11}\in\G_7'$. Let $r:=q_{22}\in\G_{14}'$, so $q_{11}=r^{10}$, $\qt_{12}=r^{12}$. We compute $d_t$ as in \eqref{eq:dt} for $n=1$. Here, $p_1=r^{9}$, so $N=14$ and
\begin{align*}
d_t=1-r^{9t+13}
+\frac{r^{9t}(1+r)(1-r^3)(1+r^2)}{1-r^{9t}}.
\end{align*}
Hence $d_t=0 \iff (1-r^{9t+13})(1-r^{9t}) = r^{9t}(1+r)(r^3-1)(1+r^2)$.
Now we check the validity of this equation for $1\le t\le N-2=12$:
\begin{align*}
t&=1: & (1+r)(1+r^{2}) &\neq r^{9}(1+r)(r^3-1)(1+r^2) ;
\\
t&=2: & (1-r^{3})(1-r^{4}) &\neq r^{4}(1+r)(r^3-1)(1+r^2);
\\
t&=3: & (1+r^5)(1+r^6) &\neq r^{13}(1+r)(r^3-1)(1+r^2) ;
\\
t&=4: & 2(1+r) &\neq r^{8}(1+r)(r^3-1)(1+r^2) ;
\\
t&=5: & (1-r^{2})(1-r^{3}) &\neq r^{3}(1+r)(r^3-1)(1+r^2);
\\
t&=6: & (1+r^4)(1+r^6) &\neq r^{12}(1+r)(r^3-1)(1+r^2);
\\
t&=7: & 2(1-r^{6}) &\neq -(1+r)(r^3-1)(1+r^2);
\\
t&=8: & (1-r)(1-r^{2}) &\neq r^{2}(1+r)(r^3-1)(1+r^2);
\\
t&=9: & (1+r^3)(1+r^4) &\neq r^{11}(1+r)(r^3-1)(1+r^2);
\\
t&=10: & (1-r^{5})(1-r^{6}) &\neq r^{6}(1+r)(r^3-1)(1+r^2);
\\
t&=11: & 0 &\neq r(1+r)(r^3-1)(1+r^2);
\\
t&=12: & (1+r^2)(1+r^3) &\neq r^{10}(1+r)(r^3-1)(1+r^2).
\end{align*}
Thus $d_t\neq 0$ for all $1\le t\le 12$ so $y_1^{14} \neq 0$ by Proposition \ref{prop:dt-yn}, and hence $14\beta_1$ is a root by Lemma \ref{le:yNneq0isroot}. Therefore $\GK \toba(V)=\infty$ by Proposition \ref{pro:rootweight1}.

Assume that $q_{11}=\eta\in\G_4'$, $q_{22}^2\qt_{12}=1$: to simplify the notation, set $q=q_{22}$, so $\qt_{12}=q^{-2}$. Then $-1=q_{11}\qt_{12}^{\, 2}q_{22}=\eta q^{-3}$, so $q^3=-\eta$. The matrix $(t_{ij})_{i,j\in\I}$ of $\cR^1(V)$ has diagram
$\xymatrix{\overset{\eta}{\circ} \ar@{-}[r]^{-q^{2}} & \overset{-q^{-2}}{\circ}}$. Thus $a_{21}^{\cR^1(V)}=-1$ and then $\cR^1(V)$ has finite root system, since $\GK \toba(\cR^1(V))=\GK \toba(V)<\infty$.

Assume now $q_{11}^3\qt_{12}=1$, $q_{22}=\zeta\in\G_3'$. Let $q=q_{11}$ so $\qt_{12}=q^{-3}$. The matrix $(t_{ij})_{i,j\in\I}$ of $\cR^2(V)$ has diagram
$\xymatrix{\overset{-1}{\circ} \ar@{-}[r]^{\zeta^2 q^{3}} & \overset{\zeta}{\circ}}$. Thus $a_{12}^{\cR^1(V)}=-1$ and then $\cR^2(V)$ has finite root system, since $\GK \toba(\cR^2(V))=\GK \toba(V)<\infty$.

If $q_{11}=\eta\in\G_4'$, $q_{22}=\zeta\in\G_3'$, then $-1=q_{11}\qt_{12}^{\, 2}q_{22}=\eta q^2\zeta$, where $q=\qt_{12}$. Hence the matrix $(t_{ij})_{i,j\in\I}$ of $\cR^2(V)$ has diagram 
$\xymatrix{\overset{-1}{\circ} \ar@{-}[r]^{q^{-1}\zeta^2} & \overset{\zeta}{\circ}}$. Thus 
$\cR^2(V)$ has finite root system, and $V$ too.

\end{document}